\newtheorem{Thm}[equation]{Theorem}
\newtheorem{Pro}[equation]{Proposition}
\newtheorem{cor}[equation]{Corollary}
\newtheorem{lem}[equation]{Lemma}
\newtheorem{Example}[equation]{\rm Example}
\newtheorem{con}[equation]{\rm Convention}
\newtheorem{deft}[equation]{\rm Definition}
\newtheorem{rem}[equation]{\rm Remark}
\numberwithin{equation}{section}
\address{%
}
\def\a1s{a_1,\cdots, a_s}
\def\a{\alpha}
\def\aa{\mathcal A}
\def\andd{\quad\hbox{and}\quad}
\def\b{\beta}
\def\bb{\mathcal{B}}
\def\bl4{B_{\ell\geq4}}
\def\d{\delta}
\def\D{\Delta}
\def\bbbf{\mathbb{F}}
\def\bbbk{\mathbb{K}}
\def\lam{\lambda}
\def\Lam{\Lambda}
\def\ep{\epsilon}
\def\fm{(\cdot,\cdot)}
\def\pp{\mathcal{P}}
\def\bbbq{\mathbb{Q}}
\def\1k{\frac{1}{k}}
\def\op{\oplus}
\def\ot{\otimes}
\def\la{\langle}
\def\ra{\rangle}
\def\sub{\subseteq}
\def\rcross{R^{\times}}
\def\pf{\noindent{ Proof. }}
\def\fp{\mathfrak{p}}
\def\sfs{\mathfrak{s}}
\def\T{{\mathcal T}}
\def\u{{\mathcal U}}
\def\v{{\mathcal V}}
\def\w{{\mathcal W}}
\def\bbbz{{\mathbb Z}}
\def\1il{1\leq i\leq\ell}
\def\rre{R_{re}}
\def\rim{R_{ns}}
\def\supp{\hbox{\rm supp}}
\begin{document}
\centerline{\bf Locally Finite Root Supersystems}
\vspace{0.7cm}
\centerline{Malihe Yousofzadeh\footnote{ma.yousofzadeh@ipm.ir,\\
\emph{\small Mathematics Subject Classification:} 17B22, 17B65,\\
\emph{\small Key Words:} Locally finite root supersystem, Real root, Nonsingular root.}}

\centerline{\footnotesize
Department of Mathematics, University of Isfahan, Isfahan, Iran,}
\centerline{\footnotesize P.O.Box 81745-163, Phone Number:(98)(313 793 4606).}                

\vspace{0.5cm}
\textbf{Abstract.} We introduce the notion of locally finite root supersystems as a generalization of both locally finite root systems and generalized root systems. We classify irreducible locally finite root supersystems.
\section{Introduction}
In 2001, Neeb and Stumme \cite{NS} studied locally finite split simple Lie algebras, i.e., locally finite   simple Lie algebras containing a maximal abelian ad-diagonalizable subalgebra. They showed that a locally finite split simple Lie algebra
is   a direct limit of finite dimensional split simple Lie algebras and its corresponding root system is a direct limit of irreducible reduced finite root systems. In 2004, Loos and Neher \cite{LN} studied   direct limits of finite root systems and called them locally finite root systems. A locally finite root system $R$ is a locally finite spanning set of a nontrivial  vector space of arbitrary dimension, over a field of characteristic zero, equipped with a symmetric bilinear form which is positive definite on the rational space spanned by $R.$ In particular,  a locally finite root system $R$ dose not contain nonsingular roots. One knows that  in  contrast with a finite root system (as the root system of a   finite dimensional split simple Lie algebra), the root system of a finite dimensional basic classical Lie superalgebra contains nonsingular roots. This gave a motivation to  Serganova \cite{serg} in  1996 to introduce the notion of  generalized root systems as a  generalization of finite root systems.
The main difference between generalized root systems and  finite root
systems is the existence of nonsingular roots.
 Serganova classified irreducible generalized root systems and showed  that  such  root systems are root systems
of contragredient Lie superalgebras  \cite{K}.

%

In this work, we introduce the notion of  locally finite root supersystems. Roughly speaking, a spanning set    $R$ of a  nontrivial vector space over a field $\bbbf$ of characteristic zero, equipped with a nondegenerate symmetric bilinear form, is called a locally finite root supersystem if the root string property is satisfied.  The notion of  locally finite root supersystems is a generalization of both locally finite root systems \cite{LN} and generalized root systems \cite{serg}. Indeed, a locally finite root system is nothing but a locally finite root supersystem without  nonsingular roots and generalized root systems are nothing but locally finite root supersystems which are finite.
We classify irreducible  locally finite root supersystems and show that each locally finite root supersystem is a direct union of finite root supersystems with  the same nature.  Locally finite root supersystems appear naturally in the theory of locally finite Lie superalgebras; see  \cite{P} and \cite{You2}.   We hope our classification offers a new approach to the study of these Lie superalgebras.

The paper is arranged as follows. In Section 2, we gather the preliminaries. The third section, which is divided into two subsections, is devoted to locally finite root supersystems. In the first subsection, given a field extension $\bbbk$ of $\bbbf,$ we introduce the notion of  $(\bbbf,\bbbk)$-locally finite root supersystems and study their properties. In the second subsection, we study $(\bbbf,\bbbf)$-locally finite root supersystems which we refer to as locally finite root supersystems.  Using the material of the first subsection, we show that we can define locally finite root supersystems as a subset of a torsion free abelian group and that these  are in correspondence with the ones defined as a subset of an $\bbbf$-vector space; this would be helpful in the study of root graded Lie superalgebras. In the last section, we give the classification of irreducible locally finite root supersystems.
%

\section{Preliminaries}\label{preliminaries}
Throughout this work, $\bbbf$ is a field of characteristic zero. Unless otherwise mentioned, all vector spaces are considered over $\bbbf.$
We  denote the dual space of a  vector space $V$ by  $V^*$ and denote  its group of automorphisms  by $GL(V).$
 For a set $S,$ by $|S|,$ we mean the cardinal number of $S.$ For a map $f:A\longrightarrow B$ and $C\sub A,$ by $f\mid_{_C},$ we mean the restriction of $f$ to $C.$ Also for two symbols $i,j,$ by $\d_{i,j},$ we mean the Kronecker delta.
We finally recall that the direct union is, by definition,  the direct limit of a direct system
whose morphisms are inclusion maps.

\begin{deft}
\label{lfrs}
{\rm Suppose that $\v$ is a nontrivial vector space and $R$ is a subset of $\v.$ We say $R$   is a {\it locally finite root system} in $\v$ (or $(R,\v)$ is a {\it locally finite root system}) if

(i)  $R$ is locally finite, in the sense that  each finite dimensional subspace of $\v$ intersects $R$ in a finite set,

(ii) $0\in R$ and $R$ spans $\v,$

(iii) for $\a\in\rcross:=R\setminus\{0\},$ there is $\check\a\in\v^*$ such that
\begin{itemize}
\item $\check\a(\a) =2,$
\item $\check\a(\b)\in\bbbz;$ $\b\in R,$
\item $R$ is invariant under the reflection  $r_\a$ of $\v$ mapping $v\in\v$ to $v-\check\a(v)\a.$
\end{itemize}
Each element of a locally finite root system $(R,\v)$ is called a {\it root} and  $\dim(\v)$ is called the {\it rank} of $R.$ A finite locally finite root system is called a {\it finite root system}. A bilinear form $\fm$ on $\v$ is called {\it invariant} if it is invariant under the  reflections $r_\a,$ $\a\in \rcross.$ The subgroup $\w$ of $GL(\v)$ generated by $\{r_\a\mid\a\in R^\times\}$ is called the {\it Weyl group} of $R.$  Two locally finite root systems $( R,\u)$
 and $(S,\v)$ are  said  to be {\it isomorphic} if there is a linear isomorphism
$f:\u\longrightarrow \v$ such that $f(R)= S.$}
\end{deft}

Suppose that $(R,\v)$  is a locally finite root   system. A nonempty
subset $S$ of $R$  is said to be  a {\it subsystem} of $R$ if $S$
contains zero and  $r_\a(\b)\in S$ for $\a,\b\in S\setminus\{0\}.$
A subsystem $S$ of $R$ is called {\it full} if $(\hbox{span}_\bbbf
S)\cap R=S.$  A nonempty subset $X$ of $R$ is called {\it
irreducible,} if for each two nonzero elements $\a,\b\in X,$ there exist finitely many nonzero roots
$\a_1:=\a,\a_2,\ldots,\a_n:=\b$ in $X$ such that
$\check\a_{i+1}(\a_{i})\neq 0,$ $1\leq i\leq n-1.$ It is known that the locally finite root system $R$ is a {\it direct sum} of irreducible subsystems in the sense that $R$ is a union of irreducible subsystems $R_i$ ($i\in I$) such that for $i,j\in I$ with $i\neq j$ and $\a\in R_i^\times,$ $\b\in R_j,$   $\check\a(\b)=0$ \cite[\S 3.13]{LN}.  A basis $B$ of the vector space  $\v$ is called a {\it root base} for $R$ if $R\sub(\hbox{span}_{\bbbz^{\geq0}}B)\cup(\hbox{span}_{\bbbz^{\leq0}}B).$ The locally finite root system $R$ has a root base if and only if $R$ is countable \cite[Proposition 6.7 and Theorem 6.9]{LN}.
We take $\{R_\lam\mid
\lam\in\Gamma\}$ to be the class of all finite subsystems of $R,$
and say $\lam\preccurlyeq\mu$ $(\lam,\mu\in\Gamma)$ if $R_\lam$ is
a subsystem of $R_\mu,$ then $(\Gamma,\preccurlyeq)$ is a directed
set and $R$ is the direct union of $\{R_\lam\mid \lam\in\Gamma\}.$
Furthermore, if $R$ is irreducible, it is the direct union of its
irreducible  finite full subsystems; see \cite[Corollarries 3.15 and 3.16]{LN}.
The following Lemma  is well known in the literature \cite[\S 4.14 and Theorem 4.2]{LN}:
\begin{lem}\label{inv-form}
Suppose that $R$ is a locally finite root system in a vector space $\v,$ then $\v$ is equipped with a nondegenerate symmetric invariant  bilinear form $\fm$  such that the form restricted to $\v_\bbbq:=\hbox{span}_\bbbq R$ is a positive definite $\bbbq$-valued bilinear form. In this case for $\a\in \rcross$ and $v\in\v,$ we have $\check\a(v)=2(v,\a)/(\a,\a).$ Moreover, if $R$  is irreducible, then each two  symmetric invariant bilinear forms on $\v$ are proportional; in particular, for each nonzero  symmetric  invariant bilinear form $\fm$ on $\v$ and  $0\neq v\in \v_\bbbq,$ we have $(v,v)\neq 0.$
\end{lem}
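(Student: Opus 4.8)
The plan is to construct the form on $\v_\bbbq:=\hbox{span}_\bbbq R$ out of the Cartan pairings $\check\a$, to verify positive definiteness by reduction to finite subsystems, and then to read off the displayed formula and the uniqueness statement from invariance together with irreducibility. First I would reduce to the irreducible case: since $R$ decomposes as a direct sum of irreducible subsystems $R_i$ with $\check\a(\b)=0$ for $\a\in R_i^\times$ and $\b$ in a different component, the subspaces $\v_i:=\hbox{span}_\bbbf R_i$ are mutually orthogonal for any invariant form, $\v=\bigoplus_i\v_i$, and it suffices to treat each $\v_i$ separately. So assume $R$ is irreducible. Any invariant form must satisfy $(\a,\b)=\frac12(\a,\a)\,\check\a(\b)$ for roots $\a,\b$; comparing this with the symmetric expression $(\b,\a)=\frac12(\b,\b)\,\check\b(\a)$ forces $(\a,\a)/(\b,\b)=\check\b(\a)/\check\a(\b)$ whenever $\a,\b$ are adjacent, that is $\check\a(\b)\neq0$ (equivalently $\check\b(\a)\neq0$, a standard rank-$\leq 2$ fact). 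Because $R$ is irreducible, any two nonzero roots are linked by a chain of adjacent roots, so these forced, positive ratios determine a function $\a\mapsto n_\a\in\bbbq^{>0}$ up to one overall scalar.

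Next I would \emph{define} $(\a,\b):=\frac12 n_\a\check\a(\b)$ on $\rcross$ and extend $\bbbq$-bilinearly to $\v_\bbbq$; the defining relation $n_\a\check\a(\b)=n_\b\check\b(\a)$ makes the form symmetric. For positive definiteness I would use local finiteness: any $0\neq v\in\v_\bbbq$ is a finite rational combination of roots, hence lies in $\hbox{span}_\bbbq R_\lam$ for some finite full subsystem $R_\lam$, where our form agrees up to a positive scalar with the (positive definite) invariant form of the finite root system $R_\lam$, so $(v,v)>0$. Finally, because $R$ spans $\v$ and, within each finite full subsystem, roots have integral coordinates relative to a root base, a maximal $\bbbq$-independent set of roots is an $\bbbf$-basis of $\v$; thus $\v=\bbbf\otimes_\bbbq\v_\bbbq$, and the $\bbbf$-bilinear extension of the positive definite form on $\v_\bbbq$ is nondegenerate because the Gram matrix of a root basis is already invertible over $\bbbq$.

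The formula and the uniqueness claim I would extract from invariance alone. Expanding $(r_\a v,r_\a w)=(v,w)$ and setting $w=\a$ gives, for \emph{any} invariant symmetric form, the identity $\check\a(v)\,(\a,\a)=2(v,\a)$ for all $v$; when $(\a,\a)\neq0$ this is exactly $\check\a(v)=2(v,\a)/(\a,\a)$, and positive definiteness on $\v_\bbbq$ guarantees $(\a,\a)>0$ for the constructed form. For proportionality in the irreducible case, the same identity shows that if $(\a,\a)=0$ for some root then $\a$ lies in the radical of the form; a neighbor $\b$ then satisfies $(\b,\b)\check\b(\a)=2(\a,\b)=0$ with $\check\b(\a)\neq0$, so $\b$ lies in the radical too, and irreducibility propagates this until the whole form vanishes. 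Hence a nonzero invariant form has $(\a,\a)\neq0$ for every root, its values are pinned down up to one scalar exactly as in the construction, and any two nonzero invariant forms are proportional; in particular every nonzero invariant form is a nonzero multiple of the positive definite one, which yields $(v,v)\neq0$ for all $0\neq v\in\v_\bbbq$. The main obstacle is the well-definedness of the length function $\a\mapsto n_\a$ across the direct limit, i.e.\ the independence of the ratios from the chosen chain; this is where passing to the finite full subsystem generated by two competing chains, and invoking the classical finite theory, does the real work.
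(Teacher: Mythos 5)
Your proof is essentially correct, but it is worth noting that the paper does not prove this lemma at all: it is stated as a known result and attributed to Loos--Neher \cite[\S 4.14 and Theorem 4.2]{LN}, so there is no in-text argument to compare against. Your reconstruction is the standard one (and in fact closely parallels a draft proof left commented out in the paper's source): force the identity $2(v,\a)=\check\a(v)(\a,\a)$ from invariance under $r_\a$, use connectivity to propagate a rational length function $n_\a$ up to one overall scalar, build the form from the $\check\a$'s, and get positive definiteness on $\v_\bbbq$ by restricting to finite full subsystems. Two small points deserve a little more care than you give them. First, when you ``extend $\bbbq$-bilinearly'' from values on $\rcross$ you must check well-definedness against $\bbbq$-linear relations among roots; this is exactly where the symmetry relation $n_\a\check\a(\b)=n_\b\check\b(\a)$ earns its keep, since $\sum_i c_i\a_i=0$ gives $\sum_i c_in_{\a_i}\check\a_i(\b)=n_\b\check\b\bigl(\sum_i c_i\a_i\bigr)=0$ for every root $\b$ --- you should say this explicitly rather than only invoking the relation for symmetry. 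Second, for infinite rank the phrase ``the Gram matrix of a root basis is already invertible over $\bbbq$'' should be read locally: given $0\neq v\in\v$, write it over finitely many basis roots, pass to a finite-dimensional $\bbbq$-subspace on which the form is positive definite, and use that the finite Gram matrix is invertible over $\bbbq$ and hence over $\bbbf$. With those two clarifications your argument is complete, and your uniqueness step (a root of zero length forces the whole form to vanish by connectivity, hence a nonzero invariant form is a scalar multiple of the constructed one) is exactly right.
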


Suppose that $T$ is a nonempty  index set  with $|T|\geq 2$ and $\u:=\op_{i\in
T}\bbbf\ep_i$ is the free $\bbbf$-module over   the
set $T.$ Define the  form $$\begin{array}{c}\fm:\u\times\u\longrightarrow\bbbf\\
(\ep_i,\ep_j)\mapsto\d_{i,j}, \hbox{ for } i,j\in T
\end{array}$$
and set
\begin{equation}\label{locally-finite}
\begin{array}{l}
\dot A_T:=\{\ep_i-\ep_j\mid i,j\in T\},\\
D_T:=\dot A_T\cup\{\pm(\ep_i+\ep_j)\mid i,j\in T,\;i\neq j\},\\
B_T:=D_T\cup\{\pm\ep_i\mid i\in T\},\\
C_T:=D_T\cup\{\pm2\ep_i\mid i\in T\},\\
BC_T:=B_T\cup C_T.
\end{array}
\end{equation}
One can see that these are irreducible locally finite root systems
in their $\bbbf-$span's which we refer to as  {\it type} $A,D,B,C$
and $BC$  respectively. Moreover, every irreducible locally finite
root system either is an irreducible finite root system or is isomorphic to one of these root
systems (see \cite[\S4.14,
\S8]{LN}). Now we suppose that $R$ is  an irreducible locally finite
root system; one can define
$$\begin{array}{l}
R_{sh}:=\{\a\in R^\times\mid (\a,\a)\leq(\b,\b);\;\;\hbox{for all $\b\in R$} \},\\
R_{ex}:=R\cap2 R_{sh},\\
R_{lg}:= R^\times\setminus( R_{sh}\cup R_{ex}).
\end{array}$$
The elements of $R_{sh}$ (resp. $R_{lg},R_{ex}$) are called {\it
short roots} (resp. {\it long roots, extra-long roots}) of $R.$ We point  out that following the usual notation in the literature, we use
`` $\dot{}$ '' on the top of $A$  in the list of  locally finite root systems  above as all of them span $\u$ other than the first one which spans a subspace of codimension 1.

In what follows,  we assume that  $R$ is an irreducible locally finite  root system in a vector space $\v$  and $\w$ is its Weyl group.
We fix a  full finite irreducible  subsystem $R_0$ of $R.$ Suppose that   $\Lam$ is an index set and  $\{R_\lam\mid \lam\in\Lam\}$ is the class of all finite irreducible full subsystems of $R$ containing $R_0.$ We  know that $R$ is the direct union of $\{R_\lam\mid \lam\in\Lam\}.$ So  $\v$ is the direct union of $\{\v_\lam\mid \lam\in\Lam\}$ in which  for $\lam\in\Lam,$ $\v_\lam$ is  the linear span of $R_\lam.$ Next, for $\lam\in\Lam,$ set $\w_\lam$ to be the subgroup of $GL(\v_\lam)$ generated by $ r_\a|_{_{\v_\lam}},$ $\a\in R_\lam^\times.$ Since for  $\a\in R_\lam^\times,$ $r_\a(\v_\lam)\sub\v_\lam,$
we can identify the Weyl group of $R_\lam$ with $\w_\lam.$
Now set $$\begin{array}{ll}\fp:=\{v\in\v\mid  \check\a(v)\in\bbbz,\;\;\forall \a\in\rcross\},\\
\fp_\lam:=\{v\in\v_\lam\mid  \check\a(v)\in\bbbz,\;\;\forall \a\in R^\times_\lam\}\;\; (\lam\in\Lam).
\end{array}$$ Each element of $\fp$ is called a {\it weight}. Using Lemma \ref{inv-form}, one can see that  $\w$ (resp. $\w_\lam,$ $\lam\in\Lam$) acts on $\fp$ (resp. $\fp_\lam$) by the natural action.
\begin{deft}\label{small-minimal}\cite[Definition 3.1]{serg} {\rm Consider the action $\w$ on $\fp.$ An orbit $S\sub\fp$ is called {\it small} if for $x,y\in S,$ either $x=\pm y$ or $x-y\in \rcross.$  }\end{deft}

\begin{lem}
  \label{final}
 A subset  $S$ of $\fp$  is a small orbit  if and only if
\begin{itemize}
  \item $\w S\sub S,$
  \item the action of $\w$ on $S$ is transitive,
  \item for each $x\in S,$ there is $\lam_0\in\Lam$ such that  for all $\lam\in\Lam$ with $\lam_0\preccurlyeq\lam,$ $\w_\lam x$ is a  small  orbit in $\fp_\lam.$
\end{itemize}
\end{lem}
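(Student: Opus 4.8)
The plan is to isolate the combinatorial content from the smallness content. First I would observe that the first two bullets together merely say that $S$ is a single $\w$-orbit in $\fp$: the condition $\w S\sub S$ is what lets $\w$ act on $S,$ and transitivity of that action then forces $S$ to be exactly one orbit. Granting this, the genuine content of the lemma is the equivalence between smallness of the orbit $S$ and the third, local condition. Throughout I would lean on two standing facts. Since each $R_\lam$ is \emph{full}, one has $\rcross\cap\v_\lam=R_\lam^\times.$ And since $(\Lam,\preccurlyeq)$ is directed with $R$ (resp. $\v$) the direct union of the $R_\lam$ (resp. $\v_\lam$), any finite collection of roots and vectors lies in a common $\v_\lam$ with $\lam$ dominating any prescribed index; moreover every element of $\w$ is a finite product of reflections $r_\a,$ and for $\a\in R_\lam^\times$ the reflection $r_\a$ preserves $\v_\lam$ and restricts to a generator of $\w_\lam.$

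For the forward implication I would fix a small orbit $S$ and a point $x\in S,$ and choose $\lam_0\in\Lam$ with $x\in\v_{\lam_0}.$ For $\lam\succcurlyeq\lam_0$ we then have $x\in\v_\lam\cap\fp=\fp_\lam,$ so (as $\w_\lam$ preserves $\fp_\lam$) the set $\w_\lam x$ is a $\w_\lam$-orbit in $\fp_\lam.$ To check it is small, take $u,v\in\w_\lam x$; writing the corresponding elements of $\w_\lam$ as restrictions of products of reflections $r_\a$ with $\a\in R_\lam^\times$ and using $x\in\v_\lam,$ I get $u,v\in\w x=S$ and $u-v\in\v_\lam.$ Smallness of $S$ gives $u=\pm v$ or $u-v\in\rcross,$ and in the latter case fullness upgrades this to $u-v\in\rcross\cap\v_\lam=R_\lam^\times.$ Hence $\w_\lam x$ is small and the third condition holds.

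For the converse I would assume the three conditions, so that $S=\w x$ for each $x\in S.$ Given $y\in S,$ write $y=wx$ with $w=r_{\a_1}\cdots r_{\a_k}$ and $\a_i\in\rcross.$ Taking $\lam_0$ as in the third condition for $x$ and using directedness, I pick $\lam\succcurlyeq\lam_0$ with $\a_1,\dots,\a_k\in R_\lam$ and $x\in\v_\lam.$ Then each $r_{\a_i}$ preserves $\v_\lam$ and restricts into $\w_\lam,$ so $y=wx\in\w_\lam x.$ Since $\lam\succcurlyeq\lam_0,$ the orbit $\w_\lam x$ is small, whence $x=\pm y$ or $x-y\in R_\lam^\times\sub\rcross.$ As $x,y\in S$ were arbitrary, $S$ is small.

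The bookkeeping with the direct limit is routine. The hard part — really the only point that needs care — is the converse: an arbitrary $w\in\w$ carrying $x$ to $y$ need not lie in any single $\w_\lam$ in advance, so I must exploit the finite length of $w$ together with the directedness of $\Lam$ to descend to a finite subsystem $R_\lam$ that simultaneously contains $x,$ all the reflecting roots $\a_i,$ and dominates $\lam_0$; only then does the local smallness hypothesis bite. The appeal to fullness, used to pass from $u-v\in\rcross$ to $u-v\in R_\lam^\times,$ is the other place where the hypotheses are genuinely needed.
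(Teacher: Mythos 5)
Your proof is correct and follows essentially the same route as the paper's: localize to a finite full subsystem $R_\lam$ via directedness of $\Lam$ and the finite length of any $w\in\w$, and use fullness to pass between $u-v\in\rcross$ and $u-v\in R_\lam^\times.$ The only cosmetic quibble is that you assert $\v_\lam\cap\fp=\fp_\lam$ where only the inclusion $\v_\lam\cap\fp\sub\fp_\lam$ is evident (and is all that is needed).
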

\pf Suppose that $S\sub\fp$ is a small orbit and pick a representative $\mathfrak{s}$ of this orbit. Since $\sfs\in\v$ and $\v$ is  the direct union of $\{\v_\lam\mid \lam\in\Lam\},$ one finds $\lam_0\in\Lam$ such that $\sfs\in\v_{\lam_0}.$ So for $\lam\in\Lam$ with $\lam_0\preccurlyeq\lam,$ we have  $\sfs\in\v_\lam$ and $\check\a(\sfs)\in\bbbz$ for all $\a\in\rcross_\lam.$ Therefore $\sfs\in \fp_\lam.$ Now for $x,y\in \w_\lam\sfs\sub\w\sfs,$ we have either  $x=\pm y$ or $x-y\in R.$ In the latter case, $x-y\in \v_\lam\cap R= R_\lam$ as  $\w_\lam\sfs\sub \v_\lam$ and $R_\lam$ is a full subsystem of $R.$   This shows that $\sfs$ is a representative of a small  orbit of the action of $\w_\lam$ on $\fp_\lam.$ Now  it is trivial that if $S$ is a small orbit, then $S$ satisfies the three stated conditions. For the reverse implication, we suppose $S$ is a subset of $\fp$ satisfying the stated conditions, then for $x,y\in S,$ there is $\lam_0\in\Lam$ such that  $\w_\lam x$ is a small orbit of the action  of $\w_\lam$ on $\fp_\lam$ for  all $\lam\in\Lam$ with $\lam_0\preccurlyeq\lam.$ Also we know that there are $\a_1,\ldots,\a_\ell\in \rcross$ such that  $y=r_{\a_1}\cdots r_{\a_\ell}x.$ Now take $\mu\in\Lam$ with $\lam_0\preccurlyeq\mu$  and $\a_1,\ldots,\a_\ell\in R_\mu,$ then $y=wx\in\w_\mu x.$ Therefore, as $\w_\mu x$ is a small orbit for the action of $\w_\mu$ on $\fp_\mu,$ we get either $x=\pm y$ or $x-y\in \rcross.$
\qed
%

\begin{deft}
{\rm Suppose that $R$ is  an irreducible finite root system in a  vector space $\v.$ Suppose that $\fm$ is an invariant bilinear form on $\v.$ If $\D:=\{\a_1,\ldots,\a_\ell\}$ is a root base for $R,$ each element of  the basis $\{\omega_1,\ldots,\omega_\ell\}$ of $\v$ satisfying $(\omega_j,2\a_i)/(\a_i,\a_i)=\d_{i,j},$ $1\leq i,j\leq \ell,$ is  called a {\it fundamental weight} of $R$ (with respect to $\D$).}
\end{deft}

\begin{Pro}\label{smallorbits}
(i) Suppose that  $\ell$ is a positive integer and   $\{\ep_1,\ldots,\ep_{\ell+1}\}$ is a basis for the $\bbbf$-vector space $\v:=\bbbf^{\ell+1}.$ Define $$\fm:\v\times \v\longrightarrow \bbbf;\;\;(\ep_i,\ep_j)\mapsto \d_{i,j};\;1\leq i,j\leq \ell+1.$$ Consider the irreducible finite root system $R:=\{\ep_i-\ep_j\mid 1\leq i,j\leq \ell+1\}$ of type $A_\ell,$ then $\D:=\{\a_1:=\ep_1-\ep_2,\ldots,\a_n:=\ep_\ell-\ep_{\ell+1}\}$ is a root base for $R$ and  $\{\omega_i:=\ep_1+\cdots+\ep_i-\frac{i}{\ell+1}(\ep_1+\cdots+\ep_{\ell+1})\mid 1\leq i\leq \ell\}$ is the set of fundamental weights with respect to this root base. Also if $\ell=1,$ $\{\pm \frac{k}{2}\a_1\}$ $(k\in\bbbz)$ are the only small orbits and if  $\ell\geq2,$  $\w\omega_1$ and $\w\omega_\ell=-\w\omega_1$ are the only small orbits.

(ii) Suppose that $R$ is an irreducible finite root system of type $X\neq A$ and  rank $\ell$ with  Weyl group $\w.$ Suppose that $\D$ is a root base for $R$ and $\Omega$ is the set of fundament weights with respect to $\D.$
Then small orbits for the action of $\w$ on the set of  weights exist if and only if $R$ is of type $X\neq E_{6,7,8}, F_4.$
Moreover, if $R$ is of type $G_2,$ $R_{sh}$ is the only small orbit and if $R$ is of type $X\neq E_{6,7,8}, F_4,G_2,$ up to $\w$-conjugation, we have
$${\tiny
\begin{tabular}{|c|c|c|c|}
\hline
\hbox{ X }& \hbox{rank}&\hbox{   $R$ }& \hbox{$\D$}\\
\hline
\hbox{\tiny$B$}&$\ell\geq2$&$\{0,\pm\ep_i,\pm(\ep_i\pm \ep_j)\mid  1\leq i<j\leq \ell\}$&$\{\a_i:=\ep_i-\ep_{i+1},\a_\ell:=\ep_\ell\mid 1\leq i\leq \ell-1\}$\\
\hline
\hbox{\tiny$C$}&$\ell\geq3$&$\{0,\pm2\ep_i,\pm(\ep_i\pm \ep_j)\mid  1\leq i<j\leq \ell\}$ &$\{\a_i:=\ep_i-\ep_{i+1},\a_\ell:=2\ep_\ell\mid 1\leq i\leq \ell-1\}$\\
\hline
\hbox{\tiny$D$} &$\ell\geq4$&$\{0,\pm(\ep_i\pm \ep_j)\mid  1\leq i<j\leq \ell\}$ &$\{\a_i:=\ep_i-\ep_{i+1},\ep_{\ell}:=\ep_{\ell-1}+\ep_{\ell}\mid 1\leq i\leq \ell-1\}$\\
\hline
\hbox{\tiny$BC$}&$\ell\geq2$&$\{\pm\ep_i,\pm(\ep_i\pm \ep_j)\mid 1\leq i, j\leq  \ell\}$ &$\{\a_i:=\ep_i-\ep_{i+1},\a_{\ell}:=\ep_{\ell}\mid 1\leq i\leq \ell-1\}$\\
\hline
\hbox{\tiny$BC$}&$\ell=1$&$\{\pm\ep_1,\pm2\ep_1\}$ &$\{\a_1:=\ep_1\}$\\
\hline
\end{tabular}
}$$
$${\tiny
\begin{tabular}{|c|c|l|c|}
\hline
\hbox{ X }& \hbox{rank}&  \hbox{$\Omega$}&\hbox{small orbits}\\
\hline
$B$& $\ell=2$&$\{\omega_1=\ep_1,\omega_2=\frac{1}{2}(\ep_1+\ep_2)\}$&$\w\omega_1$\\
\hline
$B$&$\ell= 3$&$\{\omega_i=\ep_1+\cdots+\ep_i,\omega_3=\frac{1}{2}(\ep_1+\cdots+\ep_\ell)\mid 1\leq i\leq 2\}$&$\w\omega_1,\w\omega_3$\\
\hline
$B$& $\ell>3$&$\{\omega_i=\ep_1+\cdots+\ep_i,\omega_\ell=\frac{1}{2}(\ep_1+\cdots+\ep_\ell)\mid 1\leq i\leq \ell-1\}$&$\w\omega_1$\\
\hline
$C$&$\ell\geq3$&$\{\omega_i=\ep_1+\cdots+\ep_i\mid 1\leq i\leq \ell\}$&$\w\omega_1$\\
\hline
$D$ &$\ell=4$&$\{\omega_i=\ep_1+\cdots+\ep_i,\omega_{\ell-1}=\frac{1}{2}(\ep_1+\cdots+\ep_{\ell-1}-\ep_\ell),$&\\
&&$\omega_\ell=\frac{1}{2}(\ep_1+\cdots+\ep_\ell)\mid 1\leq i\leq \ell-2\}$&$\w\omega_1,\w\omega_3,\w\omega_4$\\
\hline
$D$ &$\ell>4$&$\{\omega_i=\ep_1+\cdots+\ep_i,\omega_{\ell-1}=\frac{1}{2}(\ep_1+\cdots+\ep_{\ell-1}-\ep_\ell),$&
\\&&$\omega_\ell=\frac{1}{2}(\ep_1+\cdots+\ep_\ell)\mid 1\leq i\leq \ell-2\}$&$\w\omega_1$\\
\hline
$BC$&$\ell=1$&$\{\omega_1=\frac{1}{2}\ep_1\}$&$\{\pm k\ep_1\}\;(k\in\bbbz)$\\
\hline
$BC$&$\ell=2$&$\{\omega_1=\ep_1,\omega_\ell=\frac{1}{2}(\ep_1+\ep_2)\}$&$\w\omega_1,2\w\omega_2$\\
\hline
$BC$&$\ell\geq3$&$\{\omega_i=\ep_1+\cdots+\ep_i,\omega_\ell=\frac{1}{2}(\ep_1+\cdots+\ep_\ell)\mid 1\leq i\leq \ell-1\}$&$\w\omega_1$\\
\hline
\end{tabular}
}$$

%
%
(iii) Suppose that $T$ is an infinite index set and assume $R$ is a locally finite root system  as in (\ref{locally-finite}). Then there is no small orbit for $R$  if $R$ is of type $\dot A_T$ and  $\w\ep_1$  is  the only small orbit for $R$ if $R$ is of type $B_T,C_T,BC_T$ and $D_T.$
\end{Pro}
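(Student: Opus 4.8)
The plan is to handle the two finite statements (i) and (ii) together by a Weyl-group/weight-lattice analysis, and then to deduce the infinite statement (iii) from them through Lemma~\ref{final}.

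First I would fix, for an irreducible finite root system $R$ with Weyl group $\w$ and invariant form $\fm$ (Lemma~\ref{inv-form}), the dominant chamber attached to a root base $\D$, so that every $\w$-orbit in $\fp$ has a unique dominant representative $\lam$. The key is a reformulation of Definition~\ref{small-minimal}: since both $R$ and the relation ``$x-y\in\rcross$'' are $\w$-invariant and $\w$ acts transitively on $S=\w\lam$, the orbit $S$ is small if and only if, for every $\mu\in S$, one has $\mu=\pm\lam$ or $\lam-\mu\in\rcross$. This replaces a condition on all pairs by one relative to the single point $\lam$, and it is what I would test against throughout.

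Second, I would establish a pinning lemma: \emph{if $\w\lam$ is small with $\lam\neq0$ dominant and $\dim\v\geq2$, then for each $\a\in\rcross$ one has $\check\a(\lam)\in\{0,\pm1\}$, unless $2\a\in R$ (then $\check\a(\lam)\in\{0,\pm1,\pm2\}$) or $\lam\in\bbbf\a$.} Indeed $r_\a\lam=\lam-\check\a(\lam)\a\in S$, so smallness forces $\check\a(\lam)\a\in\rcross\cup\{0\}$ or $r_\a\lam=-\lam$; the former gives the bound ($c\a\in R$ only for $c=\pm1$, or $c=\pm2$ when $2\a\in R$), the latter means $\lam\in\bbbf\a$. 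Hence a small orbit is the orbit of (a) a minuscule weight, (b) a scalar multiple of a root, forcing $\lam$ to be the highest short or highest long root, i.e. $S=R_{sh}$, $R_{lg}$, or $R$, or (c) in type $BC$, a ``doubled'' weight $2\omega$ on the non-reduced node. Only finitely many candidates survive in each type. (In rank $1$ the hypothesis fails: there $\w=\{\pm1\}$, every orbit is $\{\pm v\}$ and is automatically small, giving the families $\{\pm\tfrac k2\a_1\}$ and $\{\pm k\ep_1\}$ of (i) and of the $BC_1$ row.)

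Third, I would finish (i) and (ii) by testing the surviving candidates in the $\ep$-coordinates of (\ref{locally-finite}). For type $A_\ell$ every $\omega_k$ is minuscule, with orbit the ``$k$-subset indicator vectors'' shifted by the central term; two such differ by some $\ep_a-\ep_b\in R$ exactly when the underlying subsets differ in one element, which always holds for $k=1,\ell$ but fails for $2\leq k\leq\ell-1$ (take subsets differing in two places), while the root orbit $\w(\omega_1+\omega_\ell)=\rcross$ is not small either; this yields (i). For $B,C,D,BC$ the same bookkeeping shows $\w\omega_1=\{\pm\ep_i\}$ (the short roots in type $B$) is always small, the spin/half-spin minuscule weights are small only in the small ranks where the antipode is the unique ``far'' weight ($D_4$ triality, $B_3$, $B_2$, $BC_2$), the doubled weight behaves as in $BC_2$, and the short/long/adjoint orbits of $C,F_4$ fail via a difference such as $\ep_1+\ep_2-\ep_3-\ep_4\notin R$. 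For $G_2$ the short roots form one orbit with all pairwise differences again roots, so $R_{sh}$ is small; for $E_6,E_7$ the candidates are the minuscule weights and the root orbit $\rcross$, the latter not small and the former containing a pair differing by a weight of height $\geq2$ that is not a root (for $E_6$, highest minus lowest equals $\omega_1+\omega_6\notin R$); and $E_8,F_4$ have no minuscule weight and non-small short/long-root and adjoint orbits, so no small orbit exists. I expect this exhaustive, type-by-type coordinate check — cleanly ruling out the sporadic spin orbits in the exact ranks where they persist, and disposing of $E_{6,7,8},F_4$ — to be the main labor of the argument.

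Finally, for (iii) I would invoke Lemma~\ref{final}: a set $S\subseteq\fp$ is a small orbit for the infinite system if and only if it is $\w$-invariant and transitive and, for each $x\in S$, the restricted orbits $\w_\lam x$ are eventually small in $\fp_\lam$. The subsystems $R_\lam$ run through types $A,B,C,D,BC$ of unbounded rank, so by (ii) the only small orbit present in all sufficiently large ranks is $\w_\lam\omega_1=\{\pm\ep_i\mid i\in\lam\}$, the spin, half-spin and doubled orbits being confined to small rank. For $B_T,C_T,D_T,BC_T$ these assemble into $\w\ep_1=\{\pm\ep_i\mid i\in T\}$, which is genuinely small since any two of its elements differ by some $\ep_i\pm\ep_j\in R$ or are antipodal, and it is the only one. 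For $\dot A_T$ the two finite small orbits $\w_\lam\omega_1,\w_\lam\omega_\ell$ have representatives $\ep_j-\tfrac1{|\lam|}\sum_{i\in\lam}\ep_i$ whose ``tail'' $-1/|\lam|$ changes with $\lam$; a fixed $x\in\fp$ cannot lie in such an orbit for all large $\lam$, so the third condition of Lemma~\ref{final} can never be met and no small orbit exists, as claimed.
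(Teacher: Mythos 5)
Your overall architecture is sound and your part (iii) is essentially the paper's argument: both of you run the infinite case through Lemma~\ref{final}, reduce to finite full subsystems of large rank, and then quote parts (i)--(ii) to see that only $\w_\lam\omega_1$ survives (for $B_T,C_T,D_T,BC_T$) or that nothing survives (for $\dot A_T$). The only divergence there is cosmetic: for $\dot A_T$ the paper kills the candidate $\pm(\ep_1-\tfrac1{\ell_\lam}\sum_{i\in T_\lam}\ep_i)$ by showing it is not even a weight of the full system (its pairing with $\ep_r-\ep_s$, $s\notin T_\lam$, is $\d_{1,r}-1/\ell_\lam\notin\bbbz$), whereas you observe that its coordinate pattern depends on $|T_\lam|$ so no fixed $x\in\fp$ can represent a small $\w_\lam$-orbit for all large $\lam$; both work. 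Where you genuinely part ways is in (i)--(ii): the paper proves nothing here beyond the rank-one verifications, delegating everything to Serganova's \cite[Example 3.2, Theorem 3.4]{serg}, while you build a self-contained classification via the dominant-representative reformulation of smallness and the pinning lemma $\check\a(\lam)\in\{0,\pm1\}$ (with the $2\a\in R$ and $\lam\in\bbbf\a$ escape clauses). That reformulation and the pinning lemma are correct, and they buy independence from \cite{serg}; the cost is the exhaustive type-by-type verification, which you explicitly defer as ``the main labor'' and which is precisely the content the paper outsources, so as written your (i)--(ii) is a correct program rather than a finished proof.

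One concrete point to reconcile before executing that program: your criterion certifies $\w\omega_2$ as a small orbit in $B_2$ (the four weights $\tfrac12(\pm\ep_1\pm\ep_2)$ pairwise differ by $\pm\ep_1$, $\pm\ep_2$, or are antipodal), and you indeed list $B_2$ among the ranks where the spin orbit survives; the paper's table records only $\w\omega_1$ for type $B$, $\ell=2$. Either the table's ``up to $\w$-conjugation'' convention (here, the outer automorphism of $B_2\cong C_2$ interchanging $\omega_1$ and $\omega_2$) absorbs this orbit, or your list and the stated table disagree; you should settle which when you carry out the case check.
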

\pf
$(i)$ The first statement is immediate. For the last one, one can get the result using an easy verification if $R$ is of rank 1.
Also if the rank of $R$ is greater than 1, we have the result by \cite[Example 3.2]{serg}.
%
%
%
%
%

$(ii)$ If $\ell=1,$ the result follows using a straightforward verification. If $\ell\geq2,$ we have the statement using  \cite[Theorem 3.4]{serg}.

$(iii)$
\noindent\underline{Locally finite root systems of type $\dot A_T$}:
We have $R=\{\ep_i-\ep_j\mid i,j\in T\}.$ We fix two distinct elements of $T$ and call them $1$ and $2.$
We take $\Lam$ to be an index set and $\{T_\lam\mid \lam\in\Lam\}$ to be the class of all  finite subsets of $T$ containing $1,2.$
Then $R_\lam:=\{\ep_i-\ep_j\mid i,j\in T_\lam\}$ is a finite full irreducible subsystem of $R$ and $R=\cup_{\lam\in\Lam}R_\lam.$ Consider the action of Weyl group $\w$ of $R$ on the set of weights $\fp$ of $R$ and let $S$ be a small  orbit of this action. Then  for a fix $\sfs\in S,$ by Lemma \ref{final}, there is $\lam\in \Lam$  such that $\w_\lam\sfs$ is a small  orbit of the action  of $\w_\lam$ on $\fp_\lam.$ By part $(i),$ either  $\w_\lam\sfs=\w_\lam(\ep_1-\frac{1}{\ell_\lam}\sum_{i\in T_\lam}\ep_i)$ or $\w_\lam\sfs=-\w_\lam(\ep_1-\frac{1}{\ell_\lam}\sum_{i\in T_\lam}\ep_i),$ where $\ell_\lam=|T_\lam|;$ in particular, either $\ep_1-\frac{1}{\ell_\lam}\sum_{i\in T_\lam}\ep_i\in S$ or $-\ep_1+\frac{1}{\ell_\lam}\sum_{i\in T_\lam}\ep_i\in S.$ But if $r\in T_\lam$ and $s\in T\setminus T_\lam,$ one sees that $2(\ep_1-\frac{1}{\ell_\lam}\sum_{i\in T_\lam}\ep_i,\ep_r-\ep_s)/(\ep_r-\ep_s,\ep_r-\ep_s)=\d_{1,r}-1/{\ell_\lam}$ which is not an integer number, so $\pm (\ep_1-\frac{1}{\ell_\lam}\sum_{i\in T_\lam}\ep_i)\not\in \fp,$ a contradiction. These all together imply that if $R$ is of type $\dot A_T$ for some infinite index set $T,$ there is no small orbit for  the action of $\w$ on $\fp.$

\medskip

\noindent \underline{Locally  finite root systems of type $B_T,C_T,D_T,BC_T:$}  We just  consider type $B_T.$ The result for the  other types  similarly follows.
Fix three distinct elements of $T$ and call them 1,2 and 3. If $\Lam$ is an index set and $\{T_\lam\mid \lam\in\Lam\}$ is the class of all finite subsets of $T$ containing $1,2,3.$ Then the root system $R$ is the direct union of $R_\lam$'s where for $\lam\in\Lam,$ $R_\lam:=R\cap\hbox{span}_\bbbf\{\ep_i\mid i\in T_\lam\}.$ Now if $\sfs$ is a representative of a small orbit of the action of  Weyl group $\w$ on $\fp,$  there is $\lam\in\Lam$ with $|T_\lam|>4$ such that $\w_\lam\sfs$ is a small orbit for the action of $\w_\lam$ on the set of weights of $R_\lam,$ so by part $(ii),$ $\w_\lam\sfs=\w_\lam\ep_1$ and so $\w\sfs=\w \ep_1.$
\qed

\section{locally finite root supersystems}
Suppose that $\v$ is a vector space over $\bbbf$ and $\bbbk$ is a field extension of $\bbbf.$ By a {\it bilinear form on $\v$ with values in $\bbbk,$} we mean an $\bbbf$-bilinear map $f:\v\times \v\longrightarrow \bbbk.$  The bilinear form $f$ is called {\it symmetric} if $f(u,v)=f(v,u)$ for all $u,v\in \v.$ If $f$ is a symmetric bilinear form with values in $\bbbk,$ the set $\v^0:=\{v\in\v\mid f(v,w)=0;\;\;\forall w\in \v\}$ is called the {\it radical} of $f.$ The symmetric bilinear form $f$ is called {\it nondegenerate} if $\v^0=\{0\}.$ Using a modified version of \cite[Lemma 3.6]{MY}, we get the following lemma.
\begin{lem}\label{yoshii}
Suppose that $\v$ is an $\bbbf$-vector space and $\bbbk$ is a field extension of $\bbbf.$ Suppose that  $\fm:\v\times\v\longrightarrow \bbbk$ is a nondegenerate bilinear  form with values in $\bbbk.$ Then  for a finite dimensional subspace $W$ of $\v,$ there is  a finite dimensional subspace $\u$ containing $W$ on which the restriction of the form is nondegenerate.
\end{lem}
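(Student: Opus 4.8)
The plan is to induct on the $\bbbf$-dimension $r$ of the radical
$W^0:=\{w\in W\mid (w,w')=0 \text{ for all } w'\in W\}$
of the restriction of the form to $W$; this is finite since $W$ is finite dimensional. When $r=0$ the restriction to $W$ is already nondegenerate and we may take $\u=W$, so the whole content lies in the inductive step: enlarging $W$ by a single vector so as to strictly decrease the dimension of the radical. After at most $r$ such steps we arrive at a finite dimensional $\u\supseteq W$ whose radical is trivial, i.e.\ on which the form is nondegenerate.

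For the inductive step, assume $r\geq 1$ and pick a nonzero $z\in W^0$. Since the form is nondegenerate on $\v,$ there is $u\in\v$ with $(z,u)\neq 0;$ note $u\notin W$ because $(z,w')=0$ for all $w'\in W.$ Set $W':=W+\bbbf u,$ a finite dimensional subspace containing $W.$ The key observation is that the newly added direction cannot survive in the radical: if $x=w+au\in (W')^0$ with $w\in W$ and $a\in\bbbf,$ then pairing with $z\in W'$ gives $0=(x,z)=(w,z)+a(u,z)=a(z,u),$ where $(w,z)=(z,w)=0$ since $z\in W^0.$ As $(z,u)\neq 0,$ we conclude $a=0.$ Hence $(W')^0\sub W,$ and in fact $(W')^0=\{w\in W^0\mid (w,u)=0\},$ the kernel of the $\bbbf$-linear functional $w\mapsto (w,u)$ on $W^0,$ which is nonzero because its value at $z$ is $(z,u)\neq 0.$ Therefore $\dim_\bbbf (W')^0\leq r-1,$ and the induction hypothesis applied to $W'$ yields the desired $\u.$

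The point to watch — and the reason this is a modified version of the finite dimensional statement — is the mismatch between the scalar field $\bbbf$ and the value field $\bbbk$: one cannot rescale $u$ to normalize $(z,u)$ to $1,$ nor build a basis of the radical dual to some chosen vectors over $\bbbk,$ since vectors may be combined only over $\bbbf$ while the form takes values in the larger field $\bbbk.$ The argument above sidesteps this entirely: it never inverts an element of $\bbbk,$ using only that $(z,u)$ is a nonzero element of the field $\bbbk$ and hence that $a(z,u)=0$ forces $a=0$ in $\bbbf.$ I also use symmetry of the form once, to identify $(w,z)$ with $(z,w);$ this is available in the setting of the paper, where the forms under consideration are symmetric. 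No single step is a genuine obstacle once the case $a\neq 0$ is ruled out, and ruling it out is exactly what guarantees that the one-vector-at-a-time reduction strictly decreases the radical and therefore terminates.
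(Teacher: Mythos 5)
Your proof is correct and follows essentially the same route as the paper's: induction on the dimension of the radical of $W$, adjoining a single vector $u$ (the paper's $x_1$) that pairs nontrivially with a chosen radical element, and observing that the radical of the enlarged space sits inside the old radical and misses that element, hence strictly drops in dimension. Your identification of the new radical as the kernel of $w\mapsto (w,u)$ on $W^0$ is a slightly cleaner way of packaging the paper's dimension-count argument, but the idea is the same.
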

\pf Suppose that $W$
is a finite dimensional subspace of $\v.$ We use induction on dimension of the radical $W^0$ of $W$ to get the result. If the form  is nondegenerate on $W,$ there is nothing to prove. Suppose $\{u_1,\ldots,u_m\}$ is a basis for $W^0$ and extend it to a basis $\{u_1,\ldots,u_m,w_1,\ldots,w_n\}$ for $W.$ Since the form on $\v$ is nondegenerate, there is $x_1\in\v$ such that $(u_1,x_1)\neq 0.$ Now consider the subspace $W_1:=\hbox{span}_\bbbf\{w_1,\ldots,w_n,u_1,\ldots,u_m,x_1\}.$
If $x:=\sum_{i=1}^nr_iw_i+\sum_{i=1}^ms_iu_i+rx_1$ belongs to $W_1^0,$ the radical of the form on $W_1,$ then $(x,u_1)=0.$ This implies that  $r=0.$ Therefore  $x=\sum_{i=1}^nr_iw_i+\sum_{i=1}^ms_iu_i$ and so $x\in W^0.$  This means that  $W_1^0\sub  W^0.$ Now if $\dim(W_1^0)=\dim(W^0),$ then $W^0=W_1^0.$ But $(u_1,x_1)\neq 0$ which shows that $u_1\in W^0\setminus  W_1^0.$ So $\dim(W_1^0)\lneq \dim(W^0).$  Now by induction hypothesis,  there is a finite dimensional subspace $\u$ of $V$ containing $W_1$ (and so containing $W$) on which the form is nondegenerate.\qed

%
%
\subsection{$(\bbbf,\bbbk)$-Locally finite root supersystems}
Throughout this subsection, we assume $\bbbk$ is a field extension of $\bbbf.$ We define two classes $\T_{\bbbf,\bbbk}$ and $\T'_{\bbbf,\bbbk}$ of triples  $(\v,\fm,R)$ consisting of a vector space   $\v$ over $\bbbf,$   a nondegenerate symmetric  bilinear form  $\fm:\v\times\v\longrightarrow\bbbk$ with values in $\bbbk$ and a subset  $R$ of $\v$ satisfying certain axioms. The elements of  $\T_{\bbbf,\bbbk}$ satisfy a so-called {\it local finiteness} property while the elements of   $\T'_{\bbbf,\bbbk}$  satisfy a so-called {\it root string} property. We first investigate the properties of the elements of each class and finally  prove that these two classes coincide.

  For a subset $R$ of an $\bbbf$-vector space $\v$  equipped with a nondegenerate symmetric bilinear form $\fm$ with values in $\bbbk,$ we set $$\begin{array}{l}
R^\times_{re}:=\{\a\in R\mid (\a,\a)\neq0\},\\\\
R_{ns}:=\{\a\in R\mid (\a,\a)=0 \},\\\\
R^\times:=R\setminus\{0\},\;\; \rim^\times:=\rim\setminus\{0\},\;\;\rre:=\rre^\times\cup\{0\}.
\end{array}$$

Set $\T_{\bbbf,\bbbk}$ to be the class all triples $(\v,\fm,R),$ where $\v$ is a vector space over $\bbbf,$ $\fm$ is a nondegenerate symmetric  bilinear form on $\v$ with values in $\bbbk$ and  $R$ is  a subset of $\v$ such that
 \begin{enumerate}
\item $0\in R$ and $\rre$ is locally finite in $\v_{re}:=\hbox{span}_\bbbf\rre,$
\item $R=-R,$
\item $\hbox{span}_\bbbf R=\v,$
\item for $\a\in R^\times_{re}$ and $\b\in R,$ $2(\b,\a)/(\a,\a)\in\bbbz$ and $\b-\frac{2(\b,\a)}{(\a,\a)}\a\in R,$
\item for $\a\in R_{ns}$ and $\b\in R$ with $(\a,\b)\neq 0,$ $\{\b-\a,\b+\a\}\cap R\neq \emptyset.$
\end{enumerate}

\medskip

Next  set $\T_{\bbbf,\bbbk}'$ to be the class of all triples $(\v,\fm,R),$ where $\v$ is a vector space over $\bbbf,$ $\fm$ is a nondegenerate symmetric  bilinear form on $\v$ with values in $\bbbk$ and  $R$ is  a subset of $\v$ such that  \begin{enumerate}
\item $0\in R,$
\item $R=-R,$
\item $\hbox{span}_\bbbf R=\v,$
\item for $\a\in R^\times_{re}$ and $\b\in R,$ $2(\b,\a)/(\a,\a)\in\bbbz$ and $\b-\frac{2(\b,\a)}{(\a,\a)}\a\in R,$
\item for $\a\in\rre^\times$ and $\b\in \rre,$ there are nonnegative integers $p,q$ such that $\{i\in \bbbz\mid \b+i\a\in \rre\}=\{-p,\ldots,q\}$ and $p-q=2(\b,\a)/(\a,\a),$ (we refer to this property as the {\it root string property}),
\item for $\a\in R_{ns}$ and $\b\in R$ with $(\a,\b)\neq 0,$ $\{\b-\a,\b+\a\}\cap R\neq \emptyset.$
\end{enumerate}
\begin{con}{\rm
Throughout this subsection, we always assume for a triple $(\v,\fm,R)\in\T_{\bbbf,\bbbk}\cup\T'_{\bbbf,\bbbk},$ $\rre\neq\{0\}.$}
\end{con}

\begin{deft}{\rm
For a  triple $(\v,\fm,R)$ of $ \T_{\bbbf,\bbbk}\cup T'_{\bbbf,\bbbk},$ we say $(\v,\fm,R),$ or simply $R,$  is  {\it irreducible} if $\rcross$ cannot be written as a disjoint union of two nonempty orthogonal subsets. For $X\sub R,$ we say $\a,\b\in X\setminus\{0\}$ are  {\it connected} in $X$ if there is a chain $\a_1,\ldots,\a_t\in X$ with $\a_1=\a,$ $\a_t=\b$ and $(\a_i,\a_{i+1})\neq0$ for $1\leq i\leq t-1.$ If $\a,\b\in X\setminus\{0\}$ are not connected, we say they are {\it disconnected}. A subset $X$ of $R$ is called {\it connected} if each two nonzero elements of $X$ are connected in $X.$}
\end{deft}

 Suppose that $(\v,\fm,R)\in\T_{\bbbf,\bbbk}\cup \T'_{\bbbf,\bbbk}$ and $\a\in \rre^\times.$ We note that for $v\in \v=\hbox{span}_\bbbf R,$ there are $r_1,\ldots,r_n\in\bbbf$ and $\a_1,\ldots,\a_n\in R$ with $v=\sum_{i=1}^nr_i\a_i,$  so we have \begin{equation}\label{compatible}
2\frac{(v,\a)}{(\a,\a)}=2\frac{\sum_{i=1}^n(r_i\a_i,\a)}{(\a,\a)}=\sum_{i=1}^nr_i\frac{2(\a_i,\a)}{(\a,\a)}\in\bbbf.\end{equation} This allows us to define $$r_\a:\v\longrightarrow \v;\;\; v\mapsto v-\frac{2(v,\a)}{(\a,\a)}\a\;\;\;(v\in\v).$$ The subgroup $\w$ of $GL(\v)$ generated by $\{r_\a\mid \a\in \rcross_{re}\}$ is called the {\it Weyl group} of $R.$

\begin{lem}\label{real roots}
Suppose that $(\v,\fm,R)\in\T_{\bbbf,\bbbk},$ then

(a) for $\a,\b\in R$ and $w\in\w,$ $(w(\a),w(\b))=(\a,\b),$

(b) $R_{re}$ is a locally finite root system in $\v_{re}=\hbox{span}_\bbbf \rre;$ in particular,  if $\rim=\{0\},$ $R$ is a locally finite root system,

(c) for $\a,\b\in R_{re}^\times,$  we have $2(\b,\a)/(\a,\a)\in\{-4,-3,\ldots,3,4\},$

(d) the form $\fm$ restricted to $\v_{re}$ is nondegenerate.
\end{lem}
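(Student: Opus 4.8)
The plan is to prove the four assertions in the order (a)$\to$(b)$\to$(c)$\to$(d), each feeding the next. For (a), note first that for $\a\in\rre^\times$ the scalar $2(v,\a)/(\a,\a)$ lies in $\bbbf$ for every $v\in\v$ by (\ref{compatible}), so $r_\a$ is a well-defined $\bbbf$-linear involution of $\v$. A single generator $r_\a$ preserves $\fm$: expanding $(r_\a(u),r_\a(v))$ and using the symmetry of $\fm$, the three correction terms cancel and one is left with $(u,v)$. Since form-preservation is stable under composition and inverses, every $w\in\w$ preserves $\fm$ on all of $\v$; restricting to $\a,\b\in R$ gives (a).

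For (b), I would verify the axioms of Definition \ref{lfrs} for the pair $(\rre,\v_{re})$. Local finiteness, together with $0\in\rre$ and $\hbox{span}_\bbbf\rre=\v_{re}$, is built into the definition of $\T_{\bbbf,\bbbk}$. For $\a\in\rre^\times$ I set $\check\a(v):=2(v,\a)/(\a,\a)$, which by (\ref{compatible}) is an $\bbbf$-valued $\bbbf$-linear functional on $\v_{re}$ with $\check\a(\a)=2$ and $\check\a(\rre)\sub\bbbz$ by the fourth axiom of $\T_{\bbbf,\bbbk}$. That same axiom gives $r_\a(\b)\in R$ for $\b\in\rre$, and part (a) gives $(r_\a(\b),r_\a(\b))=(\b,\b)$, so $r_\a(\b)$ is again real; hence $r_\a$ preserves $\rre$ (and $\v_{re}$). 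Thus $\rre$ is a locally finite root system in $\v_{re}$, and if $\rim=\{0\}$ then $R=\rre$ and $\v=\v_{re}$, which is the final clause.

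Parts (c) and (d) then follow by importing, through Lemma \ref{inv-form} applied to the locally finite root system $\rre$, a nondegenerate symmetric invariant form $\la\cdot,\cdot\ra$ on $\v_{re}$ that is positive definite on $\hbox{span}_\bbbq\rre$ and satisfies $\check\a(v)=2\la v,\a\ra/\la\a,\a\ra$. The key observation is that $\check\a$ is intrinsic to $\rre$ — the reflection $r_\a$, and hence $\check\a$, is the unique linear map fixing a hyperplane, sending $\a$ to $-\a$ and preserving $\rre$, by the standard transvection argument carried out inside finite subsystems — so the \emph{same} integer $\check\a(\b)$ is computed by $\fm$ and by $\la\cdot,\cdot\ra$. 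For (c), Cauchy--Schwarz gives $\check\a(\b)\check\b(\a)=4\la\a,\b\ra^2/(\la\a,\a\ra\la\b,\b\ra)\in\{0,1,2,3,4\}$; since $\check\a(\b)$ and $\check\b(\a)$ vanish together, $\check\a(\b)\neq0$ forces $|\check\b(\a)|\geq1$ and hence $|\check\a(\b)|\leq4$, and $2(\b,\a)/(\a,\a)=\check\a(\b)$ is precisely the quantity in (c). For (d), if $v$ lies in the radical of $\fm|_{\v_{re}}$ then $(v,\a)=0$ for all $\a\in\rre^\times$, so $\check\a(v)=0$, whence $\la v,\a\ra=0$; as $\rre$ spans $\v_{re}$ and $\la\cdot,\cdot\ra$ is nondegenerate, $v=0$.

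I expect (d) to be the main obstacle: the form $\fm$ is only $\bbbk$-valued and carries no positivity, so its radical cannot be controlled directly. The device that resolves this is that the coroots are combinatorial invariants of $\rre$ expressed simultaneously through $\fm$ and through the classical positive-definite form of Lemma \ref{inv-form}, which pins the radical of $\fm|_{\v_{re}}$ inside the trivial radical of the definite form.
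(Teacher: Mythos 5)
Your treatment of (a) and (b) coincides with the paper's: the author dismisses (a) as "easy to see" (your cancellation of the three correction terms is exactly that computation) and proves (b) by the same verification of the axioms of Definition \ref{lfrs} with $\check\a:=2(\a,\cdot)/(\a,\a)$; your (c) merely re-derives, via Cauchy--Schwarz on the positive definite $\bbbq$-form of Lemma \ref{inv-form}, the standard bound that the paper simply cites from \cite{LN}. The genuine divergence is in (d), where both arguments are correct but structured differently. The paper extends $\fm$ to a $\bbbk$-bilinear form $\fm_\bbbk$ on $\bbbk\ot_\bbbf\v_{re}$, decomposes $\rre$ into irreducible components $R_i$, and uses the proportionality of invariant forms on each $\hbox{span}_\bbbk R_i$ (again Lemma \ref{inv-form}) to see that $\fm_\bbbk$ is an orthogonal sum of nonzero multiples of nondegenerate forms, hence nondegenerate. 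You instead note that any $v$ in the radical of $\fm|_{\v_{re}}$ is annihilated by every coroot $\check\a$, and since the coroots of the root system $(\rre,\v_{re})$ constructed in (b) are precisely the functionals $2(\cdot,\a)/(\a,\a)$, the classical nondegenerate form of Lemma \ref{inv-form} also pairs $v$ trivially with the spanning set $\rre$, forcing $v=0$. This avoids the base change and the component-by-component comparison entirely; the price is that you must know the coroot (equivalently the reflection $r_\a$) of a locally finite root system is uniquely determined, which you correctly justify by the unipotent-of-finite-order argument inside finite subsystems --- and which is in any case implicit whenever Lemma \ref{inv-form} speaks of "the" coroot. No gap; your route to (d) is, if anything, the more economical of the two.
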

\begin{proof}
$(a)$ It is easy to see.

$(b)$  For $\a\in\rre^\times,$ we consider (\ref{compatible}) and set $\check\a:=\frac{2(\a,\cdot)}{(\a,\a)}\in\v_{re}^*.$ We note that  for $\a,\b\in R_{re}^\times,$ $r_\a(\b)=\b-\check\a(\b)\a.$ We have $\check\a(\a)=2,$ $\check\a(\b)\in\bbbz$ and that by part $(a),$ $r_\a(R_{re})\sub R_{re}.$ Now as $\rre$ is locally finite in $\v_{re},$ we get that  $R_{re}$ is a locally finite root system in $\v_{re}$.

$(c)$ It follows from part $(b);$ see \cite[\S 3]{LN}.

$(d)$ We know from part ($b$)  that $\rre$ is a locally finite root system in $\v_{re}$ with $\check\a=2(\a,\cdot)/(\a,\a),$ $\a\in\rre^\times.$ So  $1\ot \rre:=\{1\ot \a\mid \a\in \rre\}\sub \bbbk\ot_\bbbf\v_{re}$ is a locally finite root system in $\bbbk\ot_\bbbf\v_{re}$ \cite[\S 4.14]{LN}. Extend the form on $\v_{re}$ to the form $\fm_\bbbk:(\bbbk\ot_\bbbf\v_{re})\times (\bbbk\ot_\bbbf\v_{re})\longrightarrow \bbbk $ defined  by $$(r\ot u,s\ot v)_\bbbk:=rs(u,v);\;\;r,s\in\bbbk,\; u,v\in\v_{re}.$$ We identify $1\ot \rre$ with $\rre$ and note that   $\rre$  is a direct sum of irreducible subsystems $R_i$ ($i\in I$) such that  $\check\a(\b)=0$ for $\a\in R_i\setminus\{0\},$ $\b\in R_j,$ $i,j\in I$ with $i\neq j;$ in particular, we have  $(R_i,R_j)_\bbbk=(R_i,R_j)=\{0\}$ for $i,j\in I$ with $i\neq j.$ On the other hand, by Lemma \ref{inv-form}, $\bbbk\ot_\bbbf\v_{re}$ is equipped with a nondegenerate symmetric invariant bilinear form $\fm':(\bbbk\ot_\bbbf\v_{re})\times (\bbbk\ot_\bbbf\v_{re})\longrightarrow \bbbk$  such that $(R_i,R_j)'=\{0\}$ for $i,j\in I$ with $i\neq j.$ By the same Lemma, as    $\fm_\bbbk|_{_{\hbox{span}_\bbbk R_i}}$ is a nonzero symmetric invariant bilinear form, it is a nonzero scalar multiple of $\fm'|_{_{\hbox{span}_\bbbk R_i}}.$ Now it follows  that $\fm_\bbbk$ on $(\bbbk\ot_\bbbf\v_{re})\times(\bbbk\ot_\bbbf\v_{re})$ is nondegenerate. This in turn implies that the form restricted to $\v_{re}$ is nondegenerate.
\end{proof}


\begin{lem}
\label{rational}  Suppose that $(\v,\fm, R)\in\T_{\bbbf,\bbbk}\cup\T'_{\bbbf,\bbbk},$ then we have the following:

(i) If $\a,\b\in\rre^\times$ are connected in $\rre,$ then $(\a,\a)/(\b,\b)\in\bbbq.$

(ii) Each subset of $\rre^\times$ whose elements are mutually disconnected in $\rre^\times$ is linearly independent.
\end{lem}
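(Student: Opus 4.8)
The plan is to prove (i) by first handling a single edge of a connecting chain and then telescoping. Suppose first that $\a,\b\in\rre^\times$ satisfy $(\a,\b)\neq0$. Applying axiom (4) with the roles of $\a$ and $\b$ interchanged (legitimate since both lie in $\rre^\times\sub R$), I obtain integers $m:=2(\b,\a)/(\a,\a)\in\bbbz$ and $n:=2(\a,\b)/(\b,\b)\in\bbbz$. Because $(\a,\b)\neq0$ while $(\a,\a)$ and $(\b,\b)$ are nonzero (the roots being real), neither $m$ nor $n$ vanishes. Symmetry of the form, $(\a,\b)=(\b,\a)$, then gives $m(\a,\a)=n(\b,\b)$, so that $(\a,\a)/(\b,\b)=n/m\in\bbbq\setminus\{0\}$. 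For arbitrary connected $\a,\b$, I take a chain $\a=\a_1,\ldots,\a_t=\b$ in $\rre$ with $(\a_i,\a_{i+1})\neq0$; no $\a_i$ can be $0$ since $0$ pairs trivially with everything, so each $\a_i\in\rre^\times$. Applying the previous step to each consecutive pair and multiplying, $(\a,\a)/(\b,\b)$ is the telescoping product of the rational ratios $(\a_i,\a_i)/(\a_{i+1},\a_{i+1})$, hence rational.

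For (ii), the key observation is that disconnectedness forces orthogonality. If $\a,\b\in\rre^\times$ are distinct with $(\a,\b)\neq0$, then the length-two chain $\a,\b$ already connects them; therefore any two distinct elements of a set $X\sub\rre^\times$ whose elements are mutually disconnected must satisfy $(\a,\b)=0$. Thus $X$ consists of pairwise orthogonal vectors, each with nonzero self-pairing. Linear independence then follows by the standard argument: given a finite relation $\sum_i r_i\a_i=0$ with $r_i\in\bbbf$ and the $\a_i\in X$ distinct, pairing against a fixed $\a_j$ yields $r_j(\a_j,\a_j)=0$, and since $(\a_j,\a_j)\neq0$ is invertible in the field $\bbbk$ we get $r_j=0$. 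As every dependence involves only finitely many terms, the whole set $X$ is linearly independent.

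I do not expect a serious obstacle in either part. The only points needing care are, in (i), checking that $m,n\neq0$ so the ratio $n/m$ is well defined and lands in $\bbbq$, and, in (ii), reading ``disconnected in $\rre^\times$'' strictly enough to rule out even a single edge, which is exactly what produces the orthogonality. I would also note that the argument invokes only axiom (4), which is common to $\T_{\bbbf,\bbbk}$ and $\T'_{\bbbf,\bbbk}$, so no distinction between the two classes is needed.
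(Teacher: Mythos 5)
Your proof is correct and follows essentially the same route as the paper: for (i), extract the two nonzero integers $2(\b,\a)/(\a,\a)$ and $2(\a,\b)/(\b,\b)$ from axiom (4) for each edge of a connecting chain and telescope the rational ratios; for (ii), observe that mutual disconnectedness forces pairwise orthogonality and then pair a finite dependence relation against one of its members, using $(\a_j,\a_j)\neq0$. No gaps.
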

\pf ($i$) Since  $\a,\b$ are connected in $\rre,$ there are  $\a_1,\ldots,\a_m\in\rre^\times$ with $\a_1=\a,$ $\a_m=\b$ and $(\a_i,\a_{i+1})\neq 0$ for all $1\leq i\leq m-1.$ For $1\leq i\leq m-1,$ $\frac{2(\a_i,\a_{i+1})}{(\a_i,\a_i)},\frac{2(\a_{i+1},\a_i)}{(\a_{i+1},\a_{i+1})}\in\bbbz\setminus\{0\}.$ Therefore $\frac{(\a_i,\a_i)}{(\a_{i+1},\a_{i+1})}\in\bbbq$ for all $1\leq i\leq m-1$ and consequently,  we have $(\a,\a)/(\b,\b)\in\bbbq.$

($ii$) Suppose that $\{\a_j\mid j\in J\}$ is a subset of  $\rre^\times$ whose elements are mutually disconnected in $\rre^\times.$ If there is  $j_0\in J$ such that $\a_{j_0}=\sum_{j_0\neq j\in J}r_j\a_j$ for some $r_j\in \bbbf$ ($j_0\neq j\in J$), then $0\neq(\a_{j_0},\a_{j_0})=(\a_{j_0},\sum_{j_0\neq j\in J}r_j\a_j)=0,$ a contradiction. This completes the proof.
\qed

\smallskip

Using the same argument as in \cite[Lemmas 1.8, 1.10]{serg}, we have the following lemma.
\begin{lem}
\label{serg2} Suppose that  $(\v,\fm,R)\in\T_{\bbbf,\bbbk}\cup\T'_{\bbbf,\bbbk},$ then  we have the following:

{\rm (1)} If $\a\in R_{re}^\times$ and $\b\in R_{ns},$ then $2(\a,\b)/(\a,\a)\in\{0,\pm1,\pm2\}.$

{\rm (2)}  If $\a,\b\in\rim$ with $(\a,\b)\neq0$ and $k\in\bbbz,$ then $\b+k\a\in R$ only if $k=0,\pm1;$ in particular, for  $\a,\b\in\rim$ with $(\a,\b)\neq0,$  $|\{\b+k\a\mid k\in \bbbz\}|\leq3.$

\end{lem}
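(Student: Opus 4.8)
The plan is to derive both statements directly from the axioms, using only the integrality clause of axiom (4) (for a real root $\a$ the number $2(\b,\a)/(\a,\a)$ is an integer) together with, for part (1), the weak ``nonsingular neighbour'' clause of axiom (5). Notably, no Weyl-group or root-string machinery from Lemma \ref{real roots} is needed; this matters because the statement must also hold for members of $\T'_{\bbbf,\bbbk}$, where $\rre$ has not yet been identified as a locally finite root system. Everything reduces to elementary bilinear-form bookkeeping plus the observation that an isotropic root shifted by a real root changes its norm in a controlled way.

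For part (2), I would take $\a,\b\in\rim$ with $(\a,\b)\neq0$, fix $k\in\bbbz\setminus\{0\}$, and suppose $\gamma:=\b+k\a\in R$. Since $\a,\b$ are isotropic, expanding the form gives $(\gamma,\gamma)=2k(\a,\b)\neq0$, so $\gamma\in\rre^\times$ (in particular $\gamma\neq0$). Axiom (4), applied to the real root $\gamma$ and the root $\a$, forces $2(\a,\gamma)/(\gamma,\gamma)\in\bbbz$. But $(\a,\gamma)=(\a,\b)+k(\a,\a)=(\a,\b)$, so this quotient equals $1/k$; a nonzero integer whose reciprocal is an integer must be $\pm1$, which is exactly the claim. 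The ``in particular'' assertion is then immediate, since the only roots of the form $\b+k\a$ lie among $\b-\a,\b,\b+\a$.

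For part (1), let $\a\in\rre^\times$ and $\b\in\rim$, and set $n:=2(\a,\b)/(\a,\a)$, which lies in $\bbbz$ by axiom (4). Assume for contradiction that $|n|\geq3$; in particular $(\a,\b)\neq0$. Applying axiom (5) to the nonsingular root $\b$ and the root $\a$, and using $R=-R$ to absorb a sign, yields that at least one of $\b-\a$, $\b+\a$ lies in $R$. Using $(\a,\b)=\frac{n}{2}(\a,\a)$ one computes $(\b\mp\a,\b\mp\a)=(1\mp n)(\a,\a)$; since $|n|\geq3$ excludes $n=\pm1$, whichever neighbour is a root is automatically real (and nonzero, as $\b\neq\pm\a$ for norm reasons). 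Axiom (4) applied to that real neighbour and the root $\b$ then gives an integrality constraint: $\b-\a\in R$ forces $n/(n-1)\in\bbbz$, while $\b+\a\in R$ forces $n/(n+1)\in\bbbz$. Writing $n/(n-1)=1+1/(n-1)$ and $n/(n+1)=1-1/(n+1)$, integrality requires $n\in\{0,2\}$ in the first case and $n\in\{0,-2\}$ in the second, both contradicting $|n|\geq3$. Hence $n\in\{0,\pm1,\pm2\}$.

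The one genuine subtlety—the step I would flag as the crux—is matching quantifiers in axiom (5): there the nonsingular root plays the role of the reflecting vector, so I must read off that it produces precisely a neighbour $\b\pm\a$ (up to the sign handled by $R=-R$), not $\a\pm\b$. Once that neighbour is produced it is forced to be real by the $|n|\geq3$ hypothesis, after which the integrality in axiom (4) closes the argument with only the two-sign dichotomy. No case analysis beyond this is required, and part (1) and part (2) are logically independent of one another.
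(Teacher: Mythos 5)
Your proof is correct. Both computations check out: in part (2), $(\b+k\a,\b+k\a)=2k(\a,\b)\neq0$ makes $\b+k\a$ real, and $2(\a,\b+k\a)/(\b+k\a,\b+k\a)=1/k$ forces $k=\pm1$; in part (1), the neighbour guaranteed by axiom (5) has norm $(1\mp n)(\a,\a)\neq0$ when $|n|\geq3$, and the resulting integrality of $n/(n\mp1)=1\pm 1/(n\mp1)$ rules out $|n|\geq3$. You also correctly handle the quantifier order in axiom (5) (the nonsingular root is the one being added or subtracted) and correctly use $R=-R$ to convert $\a-\b\in R$ into $\b-\a\in R$.

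The paper does not write out a proof at all: it simply states that the lemma follows ``using the same argument as in Serganova, Lemmas 1.8 and 1.10.'' Your write-up is therefore more valuable than a mere citation, and it has one genuine advantage worth noting explicitly: it uses only axioms (2), (4) and (5), which are common to both $\T_{\bbbf,\bbbk}$ and $\T'_{\bbbf,\bbbk}$, and in particular it never invokes local finiteness, the root string property, or Lemma \ref{real roots}. Since Lemma \ref{serg2} is stated for the union of the two classes \emph{before} they are proved to coincide (indeed, Lemma \ref{serg2} is used inside Lemma \ref{bounded}, which is itself an ingredient of the coincidence proof), this economy is not cosmetic but necessary, and your argument respects it.
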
%

\begin{lem} \label{bounded} Suppose that $(\v,\fm, R)\in\T_{\bbbf,\bbbk}',$ then
 $\{2(\b,\a)/(\a,\a)\mid \a\in \rre^\times,\b\in R\}$ is bounded
\end{lem}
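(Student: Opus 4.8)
The plan is to split the set according to whether $\b$ is nonsingular or real, and to reduce the genuinely hard case to a rank-two subconfiguration. First I would dispose of the cheap contributions. If $\b\in\rim$, then $2(\b,\a)/(\a,\a)\in\{0,\pm1,\pm2\}$ by Lemma \ref{serg2}(1), and $\b=0$ contributes $0$; so the nonsingular roots give a bounded contribution. It remains to bound $2(\b,\a)/(\a,\a)$ for $\a,\b\in\rre^\times$. If $\b\in\bbbf\a$, say $\b=c\a$, then axiom (4) of $\T'_{\bbbf,\bbbk}$ forces both $2(\b,\a)/(\a,\a)=2c$ and $2(\a,\b)/(\b,\b)=2/c$ into $\bbbz$, so $c\in\{\pm\frac12,\pm1,\pm2\}$ and $2(\b,\a)/(\a,\a)\in\{\pm1,\pm2,\pm4\}$. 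Thus I may assume $\a,\b\in\rre^\times$ are linearly independent with $(\a,\b)\neq0$.

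Next I would pass to rank two. Put $W:=\hbox{span}_\bbbf\{\a,\b\}$ and $\Phi:=\rre\cap W$. A direct computation shows that every reflection $r_\gamma$ ($\gamma\in\rre^\times$) preserves $\fm$, hence preserves norms; combined with axiom (4) this gives $r_\a(\Phi),r_\b(\Phi)\sub\Phi$, and since $r_\a,r_\b$ stabilize $W$, the set $\Phi$ is stable under the dihedral group $D:=\langle r_\a,r_\b\rangle$ and inherits the unbroken root-string property (axiom (5)). The problem is thereby reduced to bounding the Cartan integers of the rank-two system $\Phi$, and for this it suffices to prove that $\Phi$ is \emph{finite}.

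The heart of the matter, and the step I expect to be the main obstacle, is to rule out an infinite $\Phi$. Writing $a:=2(\b,\a)/(\a,\a)$ and $b:=2(\a,\b)/(\b,\b)$, the matrix of $r_\a r_\b$ on $W$ has determinant $1$ and trace $ab-2$, so $D$ is finite exactly when $ab\in\{0,1,2,3\}$ and infinite as soon as $ab\geq4$ or $ab<0$. If $D$ were infinite, iterating $r_\a r_\b$ would produce real roots $\gamma\in\Phi$ with $|2(\gamma,\a)/(\a,\a)|$ arbitrarily large; the unbroken $\a$-string through such a $\gamma$ then forces every one of $\gamma,\gamma-\a,\ldots,r_\a(\gamma)$ to lie in $\rre$. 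Evaluating the norm along this long string, a quadratic in the step index, I would show that some interior lattice point is compelled to be a root while simultaneously being an inadmissible multiple of a shorter root of $\Phi$, contradicting the root-string structure (or Lemma \ref{serg2}). Making this clash precise uniformly over the possible infinite types is the delicate point; it is exactly what forces $ab\in\{0,1,2,3\}$ and hence $\Phi$ finite.

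Once $\Phi$ is finite it is a finite crystallographic root system in $W$ (its coroots are $\bbbf$-valued by (\ref{compatible})); since $(\a,\b)\neq0$ the spanning roots $\a,\b$ are connected, so $\Phi$ is irreducible. The Cartan integers $a,b$ are insensitive to rescaling $\fm$, so by Lemma \ref{inv-form} they coincide with the Cartan integers computed in the positive definite invariant form of $\Phi$ on $\hbox{span}_\bbbq\Phi$; Cauchy--Schwarz there, together with linear independence of $\a,\b$, gives $ab\leq3$, whence $|2(\b,\a)/(\a,\a)|=|a|\leq3$. Combining the nonsingular, proportional, and this independent case yields the uniform bound $4$, which completes the proof.
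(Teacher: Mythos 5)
Your reduction of the easy cases is fine: the bound for $\b\in\rim$ via Lemma \ref{serg2}(1) and the treatment of proportional pairs are correct, and the passage to $W=\hbox{span}_\bbbf\{\a,\b\}$ with $\Phi=\rre\cap W$ stable under $\langle r_\a,r_\b\rangle$ is legitimate. But there is a genuine gap exactly where you place ``the heart of the matter.'' In $\T'_{\bbbf,\bbbk}$ local finiteness is \emph{not} an axiom --- this lemma is the main ingredient used afterwards to prove $\T'_{\bbbf,\bbbk}=\T_{\bbbf,\bbbk}$ --- so producing infinitely many roots in the plane $W$ when $r_\a r_\b$ has infinite order is not by itself a contradiction. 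You acknowledge this and propose to extract a contradiction from the norms along a long unbroken $\a$-string, but you never carry it out: the sentence ``I would show that some interior lattice point is compelled to be a root while simultaneously being an inadmissible multiple of a shorter root \ldots Making this clash precise uniformly over the possible infinite types is the delicate point'' states the problem rather than solving it. As written, the step that rules out large $|2(\b,\a)/(\a,\a)|$ is missing, and it is not evident that the particular clash you describe (an inadmissible multiple) is the one that actually occurs.

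For comparison, the paper closes this gap by a direct computation that never mentions the dihedral group or finiteness of $\Phi$. Assuming $a:=2(\b,\a)/(\a,\a)\leq-10$, the root string axiom forces $\b+2\a,\b+3\a\in\rre$, and then, splitting into the three cases $b=-1$; $b\neq-1$ with $a\neq 4b$; and $b\neq-1$ with $a=4b$ (where $b:=2(\b,\a)/(\b,\b)\in\bbbz\setminus\{0\}$), one computes respectively $2(\b,\b+3\a)/(\b+3\a,\b+3\a)$, $2(\b,\b+2\a)/(\b+2\a,\b+2\a)$ and $2(\a,\b+2\a)/(\b+2\a,\b+2\a)$ and checks that each is a rational number that cannot be an integer, contradicting axiom (4). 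Some explicit estimate of this kind is what your sketch still owes; your framing via the order of $r_\a r_\b$ could in principle be completed along the same lines, but until the non-integrality computation is actually done the lemma is not proved. (Note also that once you know $ab\in\{0,1,2,3\}$ with $a,b\in\bbbz\setminus\{0\}$ you already have $|a|\leq 3$, so the final paragraph invoking Lemma \ref{inv-form} and Cauchy--Schwarz on $\hbox{span}_\bbbq\Phi$ is redundant.)
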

\pf  We first note that by Lemma \ref{serg2}, for $\a\in\rre^\times$ and $\b\in \rim,$ $\frac{2(\b,\a)}{(\a,\a)}\in\{0,\pm1,\pm2\}.$ So  we assume  $\a,\b\in \rre^\times$ and show that  $-9\leq 2(\b,\a)/(\a,\a)\leq 9.$ To the contrary, suppose that it is not true. Replacing $\b$ with $-\b$ if it is necessary, we assume  $\a,\b\in\rre^\times $ and  $a:=2(\b,\a)/(\a,\a)\leq-10.$ We know that there are nonnegative  integers $p,q$ such that
$$\{k\in\bbbz\mid \b+k\a\in \rre\}=\{-p,\ldots,q\}\;\hbox{and}\; p-q=2(\b,\a)/(\a,\a)=a\leq -10.$$
So $\b+2\a,\b+3\a\in \rre.$
Since $(\b,\a)\neq 0,$ we get $b:=2(\b,\a)/(\b,\b)\in\bbbz\setminus\{0\}.$ We consider the following three cases:
\begin{itemize}
\item $b=-1:$ Since $a\leq -10,$ we get $1+(9/a)>0.$ Now we have
\begin{eqnarray*}\frac{2(\b,\b+3\a)}{(\b+3\a,\b+3\a)}=\frac{2(\b,\b)+6(\b,\a)}{-(\b,\b)(2+9/a)}
=\frac{(\b,\b)(2+3b)}{-(\b,\b)(2+9/a)}
&=&\frac{1}{2+9/a}\not\in\bbbz,
\end{eqnarray*}a contradiction.

\item  $b\neq -1$ and $a\neq 4b:$ We have \begin{equation}
\label{b}\frac{b}{b+1}>0,\;\; (1+\frac{4}{a})>0\andd \frac{b}{b+1}(1+\frac{4}{a})\neq 1.
\end{equation}
So
\begin{eqnarray*}
\frac{2(\b,\b+2\a)}{(\b+2\a,\b+2\a)}&=&\frac{2(\b,\b)+4(\b,\a)}{(\b,\b)+4(\b,\a)+4(\a,\a)}\\
&=&\frac{2(\b,\b)(1+2(\b,\a)/(\b,\b))}{(\b,\b)(1+4(\b,\a)/(\b,\b)+4(\a,\a)/(\b,\b))}\\
&=&2\frac{1+b}{1+2b+4b/a}\\
&=&2\frac{1}{1+\frac{b}{b+1}(1+\frac{4}{a})}.
\end{eqnarray*}
But by (\ref{b}), $1\neq 2\frac{1}{1+\frac{b}{b+1}(1+\frac{4}{a})}\in\bbbq$  and $0<2\frac{1}{1+\frac{b}{b+1}(1+\frac{4}{a})}<2.$ So  $2(\b,\b+2\a)/(\b+2\a,\b+2\a)\not\in\bbbz$ which is a contradiction.
\item $b\neq -1$ and $a=4b:$
We have
\begin{eqnarray*}
\frac{2(\a,\b+2\a)}{(\b+2\a,\b+2\a)}&=&\frac{2(\a,\b)+4(\a,\a)}{2(\b,\b)(b+1)}\\
&=&\frac{(\b,\b)(2(\a,\b)/(\b,\b)+4(\a,\a)/(\b,\b))}{2(\b,\b)(b+1)}\\
&=&\frac{(\b,\b)(b+4b/a)}{2(\b,\b)(b+1)}\\
&=&\frac{b+1}{2(b+1)}=1/2\not\in\bbbz,
\end{eqnarray*}
a contradiction.
\end{itemize} These all together complete the proof.
\qed

\begin{lem}
The class $\T_{\bbbf,\bbbk}$ coincides with the class $\T_{\bbbf,\bbbk}'.$
\end{lem}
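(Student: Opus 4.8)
The plan is to show the two classes $\T_{\bbbf,\bbbk}$ and $\T_{\bbbf,\bbbk}'$ are equal by proving mutual containment. Comparing the two axiom lists, the only differences concern the real-root part of the data: axiom (1) of $\T_{\bbbf,\bbbk}$ demands that $\rre$ be locally finite in $\v_{re}$, whereas $\T_{\bbbf,\bbbk}'$ instead imposes in its axiom (5) the root string property for pairs $\a\in\rre^\times$, $\b\in\rre$. All other axioms (the presence of $0$, the symmetry $R=-R$, spanning, the reflection axiom for $\a\in R_{re}^\times$, and the nonsingular axiom (5)/(6)) are shared verbatim. So the whole content of the lemma is to trade ``local finiteness of $\rre$'' against ``the root string property on $\rre$,'' and I would organize the proof around that single equivalence.

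For the containment $\T_{\bbbf,\bbbk}\sub\T_{\bbbf,\bbbk}'$, suppose $(\v,\fm,R)\in\T_{\bbbf,\bbbk}$. By Lemma \ref{real roots}(b), $\rre$ is a locally finite root system in $\v_{re}$. The root string property for a locally finite root system is standard (it holds in every finite root system, hence in a direct union of finite full subsystems, and $\rre$ is exactly such a direct union by the structure recalled in Section \ref{preliminaries}); this gives axiom (5) of $\T_{\bbbf,\bbbk}'$, and all remaining axioms carry over unchanged. This direction is essentially immediate once one invokes Lemma \ref{real roots}(b).

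The reverse containment $\T_{\bbbf,\bbbk}'\sub\T_{\bbbf,\bbbk}$ is where the real work lies: starting only from the root string property, I must produce the local finiteness of $\rre$ in $\v_{re}$. The key tool is Lemma \ref{bounded}, which says the ratios $\{2(\b,\a)/(\a,\a)\mid \a\in\rre^\times,\b\in R\}$ form a bounded set of integers; the root string property is exactly what powers its proof. Combining the bound with the root string axiom shows each root string $\{\b+i\a\mid i\in\bbbz\}\cap\rre$ has uniformly bounded length, so that the reflections $r_\a$ preserve $\rre$ (the integrality $2(\b,\a)/(\a,\a)\in\bbbz$ and reflection-invariance are already axioms (4)). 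From this one extracts an abstract root system structure on $\rre$ and deduces that $\rre$ meets each finite-dimensional subspace of $\v_{re}$ in a finite set: bounded Cartan integers together with the crystallographic constraints from Lemma \ref{rational} force only finitely many roots to lie in the span of any finite subset, which is precisely local finiteness. Once $\rre$ is locally finite, axiom (1) holds and all other axioms of $\T_{\bbbf,\bbbk}$ are inherited directly.

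The main obstacle I anticipate is the passage from ``uniformly bounded Cartan integers'' to genuine local finiteness of $\rre$, since boundedness of the integers $2(\b,\a)/(\a,\a)$ does not by itself bound the number of roots in an infinite-dimensional ambient space; I expect to need Lemma \ref{rational}, which controls length ratios along connected components and gives linear independence of mutually disconnected real roots, to reduce to finitely many ``directions'' within any finite-dimensional subspace and then bound the roots in each direction using the root strings. I would handle this by fixing a finite-dimensional $W\sub\v_{re}$, choosing finitely many connected real roots spanning $W\cap\rre$, and arguing that the bounded-string condition permits only finitely many integral combinations to remain roots — the technical care being to ensure the argument is uniform and does not secretly assume the local finiteness one is trying to prove.
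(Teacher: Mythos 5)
Your overall strategy is the same as the paper's: the containment $\T_{\bbbf,\bbbk}\sub\T'_{\bbbf,\bbbk}$ via Lemma \ref{real roots}(b) and the standard root string property of locally finite root systems, and the reverse containment via the boundedness of Cartan integers (Lemma \ref{bounded}) together with Lemma \ref{rational}. The forward direction is complete as you state it.

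In the reverse direction, however, there is a genuine gap at exactly the point you flag as needing ``technical care.'' You assert that bounded Cartan integers plus Lemma \ref{rational} ``force only finitely many integral combinations to remain roots,'' but real roots in the span of a finite subset need not be integral combinations of that subset, and boundedness of the integers $2(\b,\a)/(\a,\a)$ does not by itself count anything. The paper's actual mechanism is an injective map $\varphi:S_j\cap U\longrightarrow \bbbz^t$, $\a\mapsto(2(\a,\a_1)/(\a,\a),\ldots,2(\a,\a_t)/(\a,\a))$, from the intersection of each connected component $S_j$ with a finite-dimensional subspace $U$ into a set that Lemma \ref{bounded} shows is finite; finiteness of $S_j\cap U$ then follows from injectivity of $\varphi$. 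That injectivity is where the work is: one needs the form to be nondegenerate on $U$, so one must first enlarge the given subspace $W$ to such a $U$ using Lemma \ref{yoshii} (a step absent from your outline --- the form restricted to an arbitrary finite-dimensional subspace of $\v_{re}$ can be degenerate, and then two distinct roots could share all Cartan integers against a spanning set). Even granting nondegeneracy, equality of the tuples only gives $\a=\frac{(\a,\a)}{(\b,\b)}\b$, and one still needs the short case analysis ruling out the ratios $\pm2,\pm\frac12,-1$ before concluding $\a=\b$. None of this is supplied or sketched in your proposal, so the crux of the lemma remains unproved as written.
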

\pf We first suppose that $(\v,\fm,R)\in \T_{\bbbf,\bbbk}.$ To show that $(\v,\fm,R)\in \T'_{\bbbf,\bbbk},$ we need to prove that the root string property holds for $(\v,\fm,R).$ But it is immediate as  $\rre$ is a locally finite root system in $\v_{re}$ by Lemma \ref{real roots}; see \cite[\S 3]{LN}.

Conversely, suppose that $(\v,\fm,R)\in\T_{\bbbf,\bbbk}'.$ We show that  $(\v,\fm,R)\in\T_{\bbbf,\bbbk}.$
 For this, we need to prove that $R_{re}$ is  locally finite in $\v_{re}.$ Suppose that $W$ is a finite dimensional subspace of $\v_{re}.$ We prove that $W\cap\rre$ is  a finite set. Since the form is nondegenerate on $\v,$ by Lemma \ref{yoshii}, there is  a finite dimensional subspace $U$ of $\v$ such that the form restricted to $U$ is nondegenerate and $ W\sub U.$ Since $U$ is finite dimensional,  there is a finite subset $\{\a_1,\ldots,\a_t\}\sub R$ with $U\sub\hbox{span}_\bbbf\{\a_1,\ldots,\a_t\}.$ To complete the proof, it is enough to show $U\cap\rre$ is finite.  We note that connectedness is an equivalence relation on $\rre^\times$ and so $\rre^\times$ is decomposed into connected   components $S_j,$ where $j$ runs over a nonempty index set $J.$ Using Lemma \ref{rational}($ii$) and considering the fact that $U$ is finite dimensional, it is enough to   show that $U\cap S_j$ is a finite set for all $j\in J.$ Suppose that $j\in J$ and consider the map $$\begin{array}{l}
\varphi: S_j\cap U\longrightarrow \bbbz^t\\
\a\mapsto(2(\a,\a_1)/(\a,\a),\ldots,2(\a,\a_t)/(\a,\a)).
\end{array}$$
Since by Lemma \ref{bounded}, $\{2(\b,\gamma)/(\gamma,\gamma)\mid \b\in \rre^\times,\gamma\in R\} $ is bounded, we get that $im \varphi$ is a finite set. On the other hand, as the form on $U$ is nondegenerate, $\varphi$ is one to one; indeed, suppose  $\a,\b\in S_j\cap U$ and $2(\a,\a_i)/(\a,\a)=2(\b,\a_i)/(\b,\b)$ for all $1\leq i\leq t.$ By Lemma \ref{rational}($i$), $(\a,\a)/(\b,\b)\in\bbbq,$ so  we get  for all $1\leq i\leq t,$ $(\a-\frac{(\a,\a)}{(\b,\b)}\b,\a_i)=0.$ Therefore  $(\a-\frac{(\a,\a)}{(\b,\b)}\b,U)=\{0\}.$
So $\a=\frac{(\a,\a)}{(\b,\b)}\b$ as the form on $U$ is nondegenerate.
But $\frac{2(\a,\b)}{(\a,\a)},\frac{2(\a,\b)}{(\b,\b)}\in \bbbz$ which implies  that $(\a,\a)/(\b,\b)\in\{\pm1,\pm 2,\pm\frac{1}{2}\}.$ If $(\a,\a)/(\b,\b)=\pm2,$ then $\a=\pm2\b$ and so $(\a,\a)/(\b,\b)=4,$ a contradiction. Also if $(\a,\a)/(\b,\b)=\pm(1/2),$ then $\a=\pm(1/2)\b$ and $(\a,\a)/(\b,\b)=1/4$ that is again  a contradiction. Finally if $(\a,\a)=-(\b,\b),$ then $\a=-\b$ and so $-1=(\a,\a)/(\b,\b)=1$ that is absurd. Therefore,  $\a=\b.$ Now  $\varphi$ is one to one and $im\varphi$ is a finite set, so $S_j\cap U$ is a finite set.  This completes the proof.
\qed

\begin{deft}
{\rm We call a triple $(\v,\fm, R)\in \T_{\bbbf,\bbbk}=\T'_{\bbbf,\bbbk},$ an {\it $(\bbbf,\bbbk)$-locally finite root supersystem}}.
\end{deft}

\begin{lem}
\label{real-im}
Suppose that $(\v,\fm, R)$ is an $(\bbbf,\bbbk)$-locally finite root supersystem, then for $\a\in \rre^\times$ and $\b\in R,$ there are nonnegative integers $p,q$ such that $\{i\in \bbbz\mid \b+i\a\in R\}=\{-p,\ldots,q\}$ and $p-q=2(\b,\a)/(\a,\a).$
\end{lem}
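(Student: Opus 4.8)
The plan is to show that $S:=\{i\in\bbbz\mid\b+i\a\in R\}$ is a finite interval of integers symmetric about $-n/2$, where $n:=2(\b,\a)/(\a,\a)$. First I record two structural facts. Since $\a\in\rre^\times$, the reflection $r_\a\in\w$ preserves $R$, and a direct computation gives $r_\a(\b+i\a)=\b-(n+i)\a$; hence $i\in S\iff -n-i\in S$, i.e. $S$ is invariant under the map $i\mapsto-n-i$, which is the reflection about $-n/2$. For finiteness, note that every $\b+i\a$ lies in the at most $2$-dimensional space $W:=\hbox{span}_\bbbf\{\a,\b\}$. The norm $i\mapsto(\b+i\a,\b+i\a)$ is a quadratic in $i$ with leading coefficient $(\a,\a)\neq0$, so at most two values of $i$ make $\b+i\a$ non-singular; for every other $i\in S$ the vector $\b+i\a$ is a real root lying in the finite-dimensional subspace $W\cap\v_{re}$, so there are only finitely many of them by the local finiteness of $\rre$ in $\v_{re}$ (Lemma \ref{real roots}). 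Thus $S$ is finite.

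The heart of the argument is that $S$ has no gaps, for which I would prove the following claim: \emph{if $\mu\in\rcross$ and $\mu+\a\notin R$, then $2(\mu,\a)/(\a,\a)\ge0$} (and dually, $\nu-\a\notin R$ forces $2(\nu,\a)/(\a,\a)\le0$, obtained by applying the claim to $-\a$). Granting this, suppose $S$ were not an interval. As $S$ is finite, I can choose $i<j$ in $S$ with $j-i\ge2$ and no element of $S$ strictly between them. Put $\mu:=\b+i\a$ and $\nu:=\b+j\a$; then $\mu+\a=\b+(i+1)\a\notin R$ and $\nu-\a=\b+(j-1)\a\notin R$, so the claim gives $2(\mu,\a)/(\a,\a)\ge0$ and $2(\nu,\a)/(\a,\a)\le0$. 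But $2(\nu,\a)/(\a,\a)-2(\mu,\a)/(\a,\a)=2(j-i)>0$, a contradiction. Hence $S$ is an interval.

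The main obstacle is the claim itself, which I would prove by splitting according to the nature of $\mu$. If $\mu$ is real, the root-string axiom applied to the pair $\a,\mu\in\rre^\times$ gives a string $\{l\mid\mu+l\a\in\rre\}=\{-p,\ldots,q\}$ with $p-q=2(\mu,\a)/(\a,\a)$; since $\mu+\a\notin R$ it is not in $\rre$, which forces $q=0$ and hence $2(\mu,\a)/(\a,\a)=p\ge0$. If $\mu$ is non-singular, Lemma \ref{serg2}(1) gives $m:=2(\mu,\a)/(\a,\a)\in\{0,\pm1,\pm2\}$, so it suffices to rule out $m<0$. If $m=-1$ then $r_\a(\mu)=\mu+\a\in R$, contradicting $\mu+\a\notin R$. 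If $m=-2$, the non-singular axiom applied to $\mu\in\rim$ and $\a$ (with $(\mu,\a)\neq0$) gives $\{\mu-\a,\mu+\a\}\cap R\neq\emptyset$, so $\mu-\a\in R$; this $\mu-\a$ is real with $2(\mu-\a,\a)/(\a,\a)=-4$, so its real-root string must extend upward by at least four steps, forcing $\mu=(\mu-\a)+\a\in\rre$ — impossible since $\mu$ is non-singular. This proves $m\ge0$.

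Finally I combine the pieces: $S$ is a finite interval containing $0$ and invariant under $i\mapsto-n-i$. Writing $S=\{-p,\ldots,q\}$ with $p,q\ge0$, the reflection symmetry equates its two endpoints and yields $p-q=n=2(\b,\a)/(\a,\a)$, as required. I expect the non-singular case of the key claim — especially the subcase $m=-2$, where I must invoke the non-singular axiom and then the real-root string property to manufacture a forbidden real root — to be the delicate point; the real case and the bookkeeping with the reflection symmetry are routine.
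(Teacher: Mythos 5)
Your proof is correct, but it takes a genuinely different route from the paper's. The paper argues by explicit case analysis on $\b$: first $\b=0$; then $\a,\b\in\rre^\times$, split according to whether they lie in the same irreducible component of $\rre$ (using the positive definiteness of the form on $\hbox{span}_\bbbq R_i$ from Lemma \ref{inv-form} to see that any root in that span is real or zero, and a separate computation for distinct components); and finally $\b\in\rim^\times$ with $-2(\b,\a)/(\a,\a)\in\{0,1,2\}$, in each case writing down the string explicitly. You instead run the classical ``unbroken string'' argument uniformly: $S:=\{i\mid\b+i\a\in R\}$ is finite because the norm is a nondegenerate quadratic in $i$ (so at most two nonsingular members) and the real members live in the at most two-dimensional space $W\cap\v_{re}$; $S$ is symmetric under $i\mapsto -n-i$ via $r_\a$; and $S$ has no gaps by your one-sided saturation claim, whose real case uses the root-string axiom and whose nonsingular case correctly combines Lemma \ref{serg2}(1), the reflection for $m=-1$, and, for $m=-2$, the nonsingular axiom followed by the real string through $\mu-\a$ (which has $2(\mu-\a,\a)/(\a,\a)=-4$ and nonzero norm $3(\a,\a)$) to force the contradiction $\mu\in\rre$. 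Your approach buys uniformity and avoids both the component decomposition and Lemma \ref{inv-form}; the paper's approach yields as a by-product the explicit shapes of the strings (e.g.\ that the string through a nonsingular $\b$ orthogonal to $\a$ is symmetric of length $1$, $3$ or $5$). Two points you leave implicit are immediate but worth stating: in the gap argument $\mu=\b+i\a$ and $\nu=\b+j\a$ are automatically nonzero (otherwise $\mu+\a=\a\in R$, resp.\ $\nu-\a=-\a\in R$, contradicting the gap), so the claim, stated for $\mu\in\rcross$, applies; and the dual claim follows by replacing $\a$ with $-\a\in\rre^\times$.
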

\pf By Lemma \ref{real roots}, $\rre$ is a locally finite root system and so $\rre^\times=\cup_{i\in I} R_i,$ in which each $R_i$ is a connected component of $\rre^\times.$ Let $\a\in \rre^\times$ and $\b\in R.$ Since the only scalar multiples of $\a$ which can be  roots are $0,\pm\a,\pm(1/2)\a,\pm2\a,$ we are done if $\b=0.$ We next  suppose that $\a,\b\in \rre^\times,$ then there are $i,j\in I$ with $\a\in R_i$ and $\b\in R_j.$ Suppose that  $i=j$ and that $k$ is an integer such  that $\b+k\a\in R.$ Since $\b+k\a\in\hbox{span}_{\bbbq}R_i,$ by the proof of Lemma \ref{inv-form} either $\b+k\a=0,$ or $(\b+k\a,\b+k\a)\neq 0.$ In both cases $\b+k\a\in\rre.$ This implies that $\{k\in\bbbz\mid \b+k\a\in R\}=\{k\in\bbbz\mid \b+k\a\in \rre\}$ and so we are done.

Next suppose $i\neq j.$ Assume  $k\in\bbbz\setminus\{0\}$ and $\b+k\a\in R.$ Since $(\b+k\a,\a)\neq 0$ and $(\b+k\a,\b)\neq 0,$  we have $\b+k\a\not\in \rre.$ So $0=(\b+k\a,\b+k\a)=(\b,\b)+k^2(\a,\a).$ This  in turn implies that $(\b,\b)/(\a,\a)=-k^2.$ Suppose that $|k|>1,$ then since $(\b+k\a,\a)\neq 0,$ there is $r\in\{\pm 1\}$ with $\b+(k+r)\a\in R.$ As above, we get that  $(k+r)^2=-(\b,\b)/(\a,\a)=k^2$ that is a  contradiction. So $|k|=1.$ Now as $r_\a(\b+k\a)=\b-k\a,$ $\{ \b+k\a\mid k\in\bbbz\}\cap R$ is either $\{\b\}$ or $\{\b-\a,\b,\b+\a\}.$

Now we assume  $\a\in \rre^\times$ and  $\b\in\rim^\times.$ By Lemma  \ref{serg2}(1),  we have $n:=-2\frac{(\b,\a)}{(\a,\a)}\in\{0,\pm1,\pm2\}.$ Changing the role of $\a$ with $-\a$ if it is necessary,  we may assume that $n\in\{0,1,2\}.$

Case 1. $n=0:$  We prove that $\{\b+k\a\mid k\in\bbbz\}\cap R$ is one of the following sets: $\{\b\},$ $\{\b-\a,\b,\b+\a\}$ or $\{\b-2\a,\b-\a,\b,\b+\a,\b+2\a\}.$ We first note that since $(\a,\b)=0,$ we have $\b-k\a=r_\a(\b+k\a)$ for $k\in\bbbz.$  So $\b+k\a\in R$ if and only if $\b-k\a\in R.$ Suppose that for some  $k\in\bbbz\setminus\{0\},$  $\b+k\a\in R,$ then $(\b+k\a,\b+k\a)=k^2(\a,\a)\neq0,$ i.e., $\b+k\a\in\rre^\times.$  Now as $2(\a,\b+k\a)/(\b+k\a,\b+k\a)=2/k\in\bbbz,$ we must have $k=\pm1,\pm2.$ To complete the proof, we need to show that if $\gamma:=\b+2\a\in R,$ then $\b+\a\in R.$ For this, we suppose that $\gamma\in R,$ then  we have  $\a,\gamma\in \rre,$   $\gamma+\a=\b+3\a\not\in \rre$ and  $(\gamma,\a)\neq 0.$ Therefore  the root string property implies that $\b+\a=\gamma-\a\in \rre.$

Case 2. $n=1,2:$ We claim  that $\{\b+k\a\mid k\in\bbbz\}\cap R=\{\b,\ldots,\b+n\a\}.$ We first show that $\b+m\a\not\in R$ for $m\in\bbbz^{>n}.$ Suppose to the contrary that $m\in\bbbz^{>n}$ and $\b+m\a\in R.$ If $\b+m\a\in \rim,$ then $0=(\b+m\a,\b+m\a)=m^2(\a,\a)+2m(\b,\a).$
This implies that $-n=2(\b,\a)/(\a,\a)=-m,$ a contradiction. So $\gamma:=\b+m\a\in \rre.$ Using the root string property, we find positive integers $p,q$ with $p-q=2(\gamma,\a)/(\a,\a)=-n+2m$ such that $\{k\in\bbbz\mid \gamma+k\a\in \rre\}=\{-p,\ldots,q\}.$ Since $p-q=-n+2m,$ we get that $p\geq -n+2m,$ so $\b+(m-k)\a=\gamma-k\a\in \rre,$ for $0\leq k\leq 2m-n;$ in particular, $\b=\gamma-m\a\in \rre$ that is  a contradiction.
Next we show that for $1\leq k\leq n,$ $\b+k\a\in R.$ We know that $\b+n\a=r_\a(\b),$ so if $n=1,$ there is nothing to prove. If $n=2,$ then since $(\b+2\a,\a)=(\a,\a)\neq0,$ we get that either  $\b+3\a=(\b+2\a)+\a\in R$ or  $\b+\a=(\b+2\a)-\a\in R.$ But as we have already seen, $\b+3\a\not\in R,$ so $\b+\a\in R.$ We finally show that $\b-k\a\not\in R$ for $k\in\bbbz^{>0}.$ Suppose that $k\in\bbbz^{>0}$ and $\b-k\a\in R,$ then if $(\b-k\a,\b-k\a)=0,$ we get $-n=2(\b,\a)/(\a,\a)=k,$ a contradiction. So $\eta:=\b-k\a\in\rre.$  Therefore, by the root string property, there are positive integers $p,q$ such that $p-q=2(\eta,\a)/(\a,\a)=2(\b,\a)/(\a,\a)-2k=-n-2k$ and $\{t\in\bbbz\mid \eta+t\a\in \rre\}=\{-p,\ldots,q\}.$ So for $0\leq t\leq n+2k,$ we have $\eta+t\a\in \rre,$ in particular, $\b=\eta+k\a\in\rre$ which is again  a contradiction.
\qed

\begin{lem}
\label{general}
Suppose that $(\v,\fm,R)$ is an  irreducible $(\bbbf,\bbbk)$-locally finite root supersystem. Then there is no  class $\{\rim^t\mid t\in T\}$  of nonempty subsets of   $\rim\setminus\{0\}, $ where   $T$  is an index set with $|T|>1,$ such that
 \begin{itemize}
 \item for $t\in T,$ $\rim^t$ is invariant under the Weyl group,
 \item $\rim\setminus\{0\}=\uplus_{t\in T}\rim^t,$
 \item for $t,t'\in T$  with $t\neq t',$ $(\rim^t,\rim^{t'})=\{0\}.$
 \end{itemize}
 \end{lem}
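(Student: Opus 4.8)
The plan is to argue by contradiction: assuming such a family $\{\rim^t\mid t\in T\}$ exists, I would build a decomposition of $\rcross$ into two nonempty mutually orthogonal sets, contradicting irreducibility. For each $t\in T$ put $\u_t:=\hbox{span}_\bbbf\rim^t$; bilinearity turns the hypothesis $(\rim^t,\rim^{t'})=\{0\}$ into $(\u_t,\u_{t'})=\{0\}$ for $t\neq t'$. The first substantive step is to locate real roots relative to these subspaces: if $\a\in\rre^\times$ and $(\a,\b)\neq0$ for some $\b\in\rim^t$, then $\a\in\u_t$. This follows from Weyl-invariance of $\rim^t$, which gives $r_\a(\b)=\b-\frac{2(\b,\a)}{(\a,\a)}\a\in\rim^t$, so that $\frac{2(\b,\a)}{(\a,\a)}\a=\b-r_\a(\b)\in\u_t$; the coefficient is nonzero because $(\b,\a)\neq0$ and $(\a,\a)\neq0$, whence $\a\in\u_t$.

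Next I would partition the real roots. If a real root $\a$ were non-orthogonal both to $\rim^t$ and to $\rim^{t'}$ with $t\neq t'$, the previous step would put it in $\u_t\cap\u_{t'}$; since $\u_t\perp\u_{t'}$ this forces $(\a,\a)=0$, impossible for a real root. Hence $\rre^\times=\rre^{(*)}\uplus\bigl(\uplus_{t\in T}\rre^{(t)}\bigr)$, where $\rre^{(t)}\subseteq\u_t$ collects the real roots non-orthogonal to $\rim^t$ and $\rre^{(*)}$ collects those orthogonal to every imaginary root. The key consequence is that each $\a\in\rre^{(*)}$ is orthogonal to all of $\u_t$: it annihilates every element of $\rim^t$, hence every element of its span, and in particular all of $\rre^{(t)}$.

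To assemble the contradiction, fix $t_0\in T$ and set $A:=\rim^{t_0}\cup\rre^{(t_0)}$ and $B:=\rcross\setminus A$. Then $A\subseteq\u_{t_0}$, while every element of $B$ lies either in some $\u_t$ with $t\neq t_0$ or in $\rre^{(*)}$, and in both cases is orthogonal to $\u_{t_0}$; therefore $(A,B)=\{0\}$. Moreover $A\neq\emptyset$ because $\rim^{t_0}\neq\emptyset$, and $B\neq\emptyset$ because $|T|>1$ supplies a $t_1\neq t_0$ with $\emptyset\neq\rim^{t_1}\subseteq B$. This writes $\rcross$ as a disjoint union of two nonempty orthogonal subsets, contradicting the irreducibility of $(\v,\fm,R)$.

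The main obstacle is the pair of geometric facts isolated in the first two paragraphs---that a real root meeting an imaginary piece must lie in that piece's span, and that a real root orthogonal to all imaginary roots is then automatically orthogonal to every real root attached to a piece. These are exactly the statements that propagate the orthogonality of the imaginary pieces to the real roots; once they are available, the abstract definition of irreducibility closes the argument with no further appeal to the root-string or local-finiteness axioms.
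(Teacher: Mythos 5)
Your proof is correct. It reaches the same final contradiction as the paper --- writing $\rcross$ as a disjoint union of two nonempty orthogonal subsets --- but organizes the intermediate work differently. The paper sets $\rre^t:=\{\a\in\rre\mid(\a,\rim\setminus\rim^t)=\{0\}\}$ and proves in four steps that these sets cover $\rre,$ intersect trivially, and are mutually orthogonal; its Step 3 needs a separate appeal to irreducibility just to rule out nonzero real roots orthogonal to all of $\rim.$ Your key observation --- that a real root $\a$ with $(\a,\b)\neq 0$ for some $\b\in\rim^t$ satisfies $\frac{2(\b,\a)}{(\a,\a)}\a=\b-r_\a(\b)\in\hbox{span}_\bbbf\rim^t$ and hence lies in that span --- is a sharper use of the same reflection identity the paper computes with in its Step 2. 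It lets you read off both the disjointness and the mutual orthogonality of your pieces $\rre^{(t)}$ directly from $(\u_t,\u_{t'})=\{0\},$ and it renders the leftover set $\rre^{(*)}$ of real roots orthogonal to every nonsingular root harmless: rather than proving it is trivial, you simply place it on the $B$ side, where it is automatically orthogonal to $A\sub\u_{t_0}.$ The net effect is a shorter argument that invokes irreducibility exactly once, at the end, whereas the paper's version invokes it twice.
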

  \pf Using the same argument as in \cite[Proposition 1.15]{serg}, one can prove the  lemma, but for the convenience of the readers, we give a sketch of the proof. Suppose that  $T$ is an index set with $|T|>1$ and $\{\rim^t\mid t\in T\}$ is a class  of nonempty subsets of   $\rim\setminus\{0\} $
as in the statement.   For $t\in T,$ we set $$\rre^t:=\{\a\in \rre\mid (\a,\rim\setminus\rim^t)=\{0\}\}.$$ The proof is carried out in the following steps:
\smallskip

Step 1. For $t\in T,$ $\rre^t$ is invariant under the Weyl group: It follows from Lemma \ref{real roots}($a$) together with the fact that $\rim^t$ is invariant under the Weyl group.
\smallskip

Step 2. $\rre=\cup_{t\in T}\rre^t:$ Suppose that it is not true. So there is $\a\in\rre^\times$ such that $\a\not \in \cup_{t\in T}\rre^t.$ Fix $t_0\in T,$ then $\a\not\in \rre^{t_0},$ so there is $t_1\neq t_0$ and $\d\in\rim^{t_1}$ such that $(\a,\d)\neq 0.$ Again $\a\not\in\rre^{t_1},$ so there is $t_2\in  T\setminus\{t_1\}$ and $\gamma\in\rim^{t_2}$ such that  $(\a,\gamma)\neq 0.$ These imply that  $(\d,\gamma),(r_\a\d,\gamma)\in (\rim ^{t_1},\rim^{t_2})=\{0\}.$ Therefore we have $$0=(\d-r_\a\d,\gamma)=\frac{(2(\a,\d)\a,\gamma)}{(\a,\a)}=2\frac{(\a,\d)(\a,\gamma)}{(\a,\a)}$$ and so $(\a,\d)(\a,\gamma)=0$ which is a contradiction.

\smallskip
Step 3. If $t,t'\in T$ and $t\neq t',$ we have $\rre^t\cap\rre^{t'}=\{0\}:$ Suppose that $\rre^t\cap\rre^{t'}\neq\{0\}.$ We know that $\rre^t\cap\rre^{t'}=\{\a\in \rre\mid (\a,\rim)=\{0\}\}.$ Set $R':=\rre^t\cap\rre^{t'}.$ We claim that $(R',R\setminus R')=\{0\}.$ Suppose that $\d\in R\setminus R'$ and $\gamma\in R'.$ We just need to assume $\d\in\rre.$ Since $\d\not\in R',$ one finds $\eta\in \rim$ such that $(\d,\eta)\neq 0.$ Since $\gamma\in  R',$ we have $r_\d\gamma\in R'$ and so $(\eta,\gamma),(\eta,r_\d\gamma)\in(\rim,R')=\{0\}.$ Therefore as before, we have $(\d,\gamma)(\d,\eta)=0.$ Thus $(\d,\gamma)=0.$ So $R^\times=R'\setminus\{0\}\uplus(R^\times\setminus R')$ with $(R'\setminus\{0\},R^\times\setminus R')=\{0\}.$ This  is a contradiction as $R$ is irreducible.
\smallskip

Step 4. If $t,t'\in T$ and $t\neq t',$ we have $(\rre^t,\rre^{t'})=\{0\}:$
Suppose that $\a\in \rre^t\setminus\{0\}$ and $\b\in \rre^{t'}\setminus\{0\}.$ Using Step 3,
one finds $\d\in \rim^t$ such that $(\d,\a)\neq 0.$ Now  we have $(\d,\b),(r_\a\d,\b)\in(\rim^t,\rre^{t'})=\{0\},$ so it follows that  $(\a,\d)(\a,\b)=0$ and so $(\a,\b)=0.$

Now for a fixed element $t_0\in T,$ set  $R_1:=\cup_{t\in T\setminus\{t_0\}}(\rre^{t}\cup\rim^{t})$ and $R_2:=\rre^{t_0}\cup\rim^{t_0}.$  We  have $R^\times=(R_1\setminus\{0\})\uplus(R_2\setminus\{0\})$  and $(R_1,R_2)=\{0\}$ which contradict the irreducibility of  $(\v,\fm,R).$ This completes the proof.\qed

\medskip
\begin{Pro}\label{conjugate} Suppose that $(\v,\fm,R)$ is an $(\bbbf,\bbbk)$-locally finite root supersystem, then

(i) for   $\d,\gamma\in \rim^\times$ with $(\d,\gamma)\neq 0,$ we have  $\gamma\in\w\d\cup-\w\d,$

(ii) if $(\v,\fm,R)$ is irreducible and $\d\in \rim^\times,$ then   $\rim^\times=\w\d\cup-\w\d.$
\end{Pro}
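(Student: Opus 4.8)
The plan is to prove (i) by a single reflection and then to derive (ii) formally from (i) together with Lemma \ref{general}. For (i), let $\d,\gamma\in\rim^\times$ with $(\d,\gamma)\neq 0.$ Since $\d\in\rim$ and $(\d,\gamma)\neq 0,$ the nonsingular-root axiom (axiom (5) in the definition of $\T_{\bbbf,\bbbk}$) guarantees that at least one of $\gamma-\d,\gamma+\d$ lies in $R;$ accordingly I would write $\mu:=\gamma+\e\d\in R$ for a suitable sign $\e\in\{1,-1\}.$ Using $(\d,\d)=(\gamma,\gamma)=0$ one computes $(\mu,\mu)=2\e(\gamma,\d)\neq 0,$ so $\mu$ is a nonzero real root (it is nonzero since $\mu=0$ would force $\gamma=-\e\d$ and hence $(\gamma,\d)=0$). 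Thus $r_\mu\in\w,$ and since $(\d,\mu)=(\d,\gamma)$ while $(\mu,\mu)=2\e(\gamma,\d),$ the reflection coefficient collapses to $2(\d,\mu)/(\mu,\mu)=\e,$ whence
$$r_\mu(\d)=\d-\e\mu=\d-\e(\gamma+\e\d)=-\e\gamma.$$
Therefore $\gamma=-\e\,r_\mu(\d)\in\w\d\cup-\w\d,$ which is exactly (i).

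For (ii), assume $(\v,\fm,R)$ is irreducible and fix $\d\in\rim^\times.$ The idea is to decompose $\rim^\times$ into the orbits of the group generated by $\w$ and $-\mathrm{id},$ and to use Lemma \ref{general} to forbid more than one such orbit. Concretely, I would declare $\gamma\approx\d'$ on $\rim^\times$ to mean $\gamma\in\w\d'\cup-\w\d'.$ Since $\w$ is a group and $-\w\d'=\w(-\d')$ is itself $\w$-invariant, this is an equivalence relation whose classes $\{\rim^t\mid t\in T\}$ are nonempty, $\w$-invariant, and partition $\rim^\times.$ The first two bullet conditions of Lemma \ref{general} hold automatically; the only real content is the orthogonality condition. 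If $t\neq t'$ and some $\d'\in\rim^t,$ $\gamma'\in\rim^{t'}$ satisfied $(\d',\gamma')\neq 0,$ then part (i) would give $\gamma'\in\w\d'\cup-\w\d'=\rim^t,$ contradicting disjointness of the classes; hence $(\rim^t,\rim^{t'})=\{0\}$ for $t\neq t'.$ All hypotheses of Lemma \ref{general} are then met, and that lemma forbids $|T|>1.$ Consequently $|T|=1,$ and since the class containing $\d$ is $\w\d\cup-\w\d,$ we conclude $\rim^\times=\w\d\cup-\w\d.$

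The crux is part (i): the nonobvious move is to pass from the two nonsingular roots $\d,\gamma$ to the real root $\mu=\gamma+\e\d$ and to notice that reflecting in $\mu$ sends $\d$ to $\pm\gamma.$ Once the norms $(\mu,\mu)=2\e(\gamma,\d)$ and $(\d,\mu)=(\gamma,\d)$ are in hand, the coefficient reduces to the sign $\e$ and the identity $r_\mu(\d)=-\e\gamma$ falls out immediately. Part (ii) is then essentially formal bookkeeping: the irreducibility hypothesis enters solely through Lemma \ref{general}, and the only step requiring work there, the mutual orthogonality of distinct orbits, is precisely what part (i) delivers.
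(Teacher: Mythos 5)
Your proof is correct and follows essentially the same route as the paper: part (i) is the paper's argument with the roles of $\d$ and $\gamma$ interchanged (the paper reflects $s\gamma$ in $\a:=\d+s\gamma$ to land on $-\d$, you reflect $\d$ in $\mu:=\gamma+\e\d$ to land on $-\e\gamma$ — the same computation), and part (ii) is exactly the paper's partition of $\rim^\times$ into the classes $\w\d'\cup-\w\d'$ followed by an appeal to Lemma \ref{general}. Your explicit check that $\mu\neq 0$ is a small point the paper leaves implicit.
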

\pf ($i$) Suppose that $\d,\gamma\in \rim^\times$ such that $(\d,\gamma)\neq0.$ Since $(\d,\gamma)\neq 0,$  there is $s\in\{\pm1\}$ such that $\a:=\d+s\gamma\in R_{re}.$ Also  $\d\in\rim,$
so we get that $0=(\d,\d)=(\a-s\gamma,\a-s\gamma)=(\a,\a)-2(\a,s\gamma)$ and so $2(\a,s\gamma)/(\a,\a)=1.$ Therefore, we have $r_\a(s\gamma)=s\gamma-\frac{2(\a,s\gamma)}{(\a,\a)}\a=s\gamma-\a=-\d.$ So we have $\gamma=-sr_\a(\d).$ This implies that $\gamma\in\w\d\cup-\w\d.$

($ii$) For $\d,\d'\in\rim^\times,$ we say $\d\sim\d'$ if  $\d\in\w\d'\cup(-\w\d').$ It is an equivalence relation. Take $\{S_k\mid k\in K\}$ to be the family of all equivalence classes. If $|K|=1,$ we are done, so suppose that $|K|>1.$ For $k\in K,$ pick $\d_k\in\rim^\times$  such that $S_k=\w\d_k\cup-\w\d_k.$ Then  $\rim^\times=\uplus_{k\in K} S_k$ and each $S_k$   is $\w$-invariant. Also using part ($i$), one gets that  $(S_k,S_{k'})=\{0\},$ for $k,k'\in K$ with $k\neq k'.$  This contradicts  Lemma \ref{general} and so we are done.
\qed
\subsection{Locally finite root supersystems}
\begin{deft}
{\rm We call an {\small $(\bbbf,\bbbf)$}-locally finite root supersystem {\small $(\v,\fm,R),$} a {\it locally finite root supersystem}. If there is no confusion, we say $R$ is a locally finite root supersystem in $\v.$}
\end{deft}

\begin{Example}\label{lf}{\rm
Suppose that $\ell$ is a positive integer and $S_1,S_2$ are two finite root systems of type $A_\ell$ in vector spaces $\u_1,\u_2,$ respectively. As in the previous section, we assume $S_1=\{\ep_i-\ep_j\mid 1\leq i,j\leq \ell+1\}$  and set $\dot\ep_i:=\ep_i-\frac{1}{\ell+1}(\ep_1+\cdots+\ep_{\ell+1})$ $(1\leq i\leq \ell).$ We also take $S_2=\{\d_i-\d_j\mid 1\leq i,j\leq \ell+1\}$ and set $\dot\d_i:=\d_i-\frac{1}{\ell+1}(\d_1+\cdots+\d_{\ell+1})$ $(1\leq i\leq \ell).$  Next we put $\v:=\u_1\op\u_2$ and define $$\begin{array}{l}\fm:\v\times\v\longrightarrow \bbbf\\
(\u_1,\u_2)=\{0\},\\ (\ep_i-\ep_j,\ep_{i'}-\ep_{j'})=\d_{i,i'}-\d_{i,j'}-\d_{j,i'}+\d_{j,j'},\\
(\d_i-\d_j,\d_{i'}-\d_{j'})=-(\d_{i,i'}-\d_{i,j'}-\d_{j,i'}+\d_{j,j'})\;\;\;\;(1\leq i,i',j,j'\leq \ell+1).\end{array}$$ Then one can easily check that $$\begin{array}{l}R_1:=S_1\cup S_2\cup\pm\{\dot\ep_i+\dot\d_j\mid 1\leq i,j\leq \ell+1\},\\
 R_2:=S_1\cup S_2\cup\pm\{\dot\ep_i-\dot\d_j\mid 1\leq i,j\leq \ell+1\}\end{array}$$  are locally  finite root supersystems in $\v.$}
\end{Example}
\begin{deft}{\rm
Suppose that $(\v,\fm, R)$ is a locally finite root supersystem.
\begin{itemize}
\item  Each element of  $R$ is called a {\it root}. Elements of $\rre$ (resp. $\rim$) are called {\it real} (resp. {\it nonsingular}) roots.
\item   A subset $S$ of $R$ is called {\it sub-supersystem} if the restriction of the form to $\hbox{span}_\bbbf S$ is nondegenerate, $0\in S,$ for $\a\in S\cap\rre^\times, \b\in S$ and $\gamma\in S\cap\rim$ with $(\b,\gamma)\neq 0,$ $r_\a(\b)\in S$ and  $\{\gamma-\b,\gamma+\b\}\cap S\neq \emptyset.$
\item If $\{R_i\mid i\in Q\}$ is a class of  sub-supersystems of $R$ which are mutually   orthogonal and $R\setminus\{0\}=\uplus_{i\in Q}(R_i\setminus\{0\}),$  we say $R$ is {\it the direct sum} of $R_i$'s and write $R=\op_{i\in I}R_i.$
%
\item The locally finite root supersystem $R$ is called a {\it finite root supersystem}  if $R$ is a finite subset of $\v.$  \item
Two irreducible locally finite root supersystems $(\v,\fm_1,R)$ and $(\w,\fm_2,S)$ are called {\it isomorphic} if there is a linear isomorphism $\varphi:\v\longrightarrow \w$ and a nonzero scalar $r\in\bbbf$ such that $\varphi(R)=S$ and  $(v,w)_1=r(\varphi(v),\varphi(w))_2$ for all $v,w\in \v.$
\end{itemize}} \end{deft}
\begin{rem}\label{rem3}
{\rm (i) The systematic study of finite root systems with possibly nonsingular roots have been initiated by V. Serganava in 1996. She introduced generalized root systems which are finite root supersystems in our sense.

(ii) If two irreducible locally finite root supersystems $(\v,\fm_1,R)$ and $(\w,\fm_2,S)$ are  isomorphic, then $ \rre$ is isomorphic to $S_{re}.$}
\end{rem}
\begin{Example} {\rm Consider the notations as in Example \ref{lf}. Then  $R_1$ and $R_2$ are irreducible locally finite root supersystems. Moreover, $B:=\{\ep_i-\ep_{\ell+1},\d_i-\d_{\ell+1}\mid 1\leq i\leq \ell\}$ is a basis for $\v$ and $\varphi:\v\longrightarrow \v$ mapping $\ep_i-\ep_{\ell+1}\mapsto \ep_i-\ep_{\ell+1}$ and $\d_i-\d_{\ell+1}\mapsto -(\d_i-\d_{\ell+1})$ ($1\leq i\leq \ell$) is a linear isomorphism satisfying $\varphi(R_1)=R_2$ and $(u,v)=(\varphi(u),\varphi(v))$ for all $u,v\in\v;$ in other words, $R_1$ and $R_2$ are isomorphic.}
\end{Example}

\begin{lem}\label{decom}
Suppose that $R$ is a locally finite root supersystem, then  we have the following statements:

(i) Connectedness defines an equivalence relation on $R^\times.$  If  $\{S_i\mid i\in Q\}$ is the class of connected components of $R^\times,$ then for  $i\in Q,$   $R_i:=S_i\cup\{0\}$ is a sub-supersystem of $R$; in particular, $R$ is a direct sum of irreducible  sub-supersystems. Moreover, $R$ is irreducible if and only if  $R$ is connected.

(ii) If $R\setminus\{0\}=R_1\uplus R_2,$ where $R_1$ and $R_2$ are two nonempty orthogonal subsets of $R,$ then $R_1\cup\{0\}$ and $R_2\cup\{0\}$ are sub-supersystems of $R.$
\end{lem}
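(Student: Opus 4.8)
The plan is to derive the whole statement from two inputs: the combinatorics of the connectedness relation, and the nondegeneracy of $\fm$ on $\v.$ I would begin with part (i) by checking that connectedness is an equivalence relation on $\rcross$ --- reflexivity via the length-one chain, symmetry from $(\a,\b)=(\b,\a),$ and transitivity by concatenating chains. Writing $\{S_i\mid i\in Q\}$ for the connected components and $\v_i:=\hbox{span}_\bbbf S_i,$ the basic remark is that any two non-orthogonal nonzero roots form a length-two chain and are therefore connected; hence $(\a,\b)=0$ for $\a\in S_i,$ $\b\in S_j$ with $i\neq j,$ so the $\v_i$ are pairwise orthogonal, and $\v=\sum_{i\in Q}\v_i$ because $R$ spans $\v.$

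The step I expect to be the heart of the argument --- and essentially the only place nondegeneracy of $\fm$ enters --- is showing that $\fm$ restricts to a nondegenerate form on each $\v_i.$ If $v\in\v_i$ lies in the radical of $\fm|_{\v_i},$ then $(v,\v_i)=\{0\};$ orthogonality of the components gives $(v,\v_j)=\{0\}$ for every $j\neq i,$ so $(v,\v)=\{0\}$ and $v=0.$ From this two consequences follow that render everything else mechanical: first $\v_i\cap\v_j=\{0\}$ for $i\neq j$ (such an element lies in the radical of $\fm|_{\v_j}$), and then the identity $\rcross\cap\v_i=S_i$ (a nonzero root in $\v_i$ belonging to some $S_j$ with $j\neq i$ would lie in $\v_i\cap\v_j=\{0\}$). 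This last identity is precisely what guarantees that reflections and strings computed inside $\v_i$ return roots of $S_i.$

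With these in hand the sub-supersystem axioms for $R_i:=S_i\cup\{0\}$ read off directly. Nondegeneracy of $\fm|_{\v_i}$ is the first axiom and $0\in R_i$ is clear. For $\a\in S_i\cap\rre^\times$ and $\b\in R_i,$ the reflection axiom of $R$ gives $r_\a(\b)\in R,$ and $r_\a(\b)\in\v_i,$ so $r_\a(\b)\in(\rcross\cap\v_i)\cup\{0\}\sub R_i.$ For $\gamma\in S_i\cap\rim$ and $\b\in R_i$ with $(\b,\gamma)\neq0,$ the nonsingular-root axiom together with $R=-R$ yields $\{\gamma-\b,\gamma+\b\}\cap R\neq\emptyset;$ neither $\gamma\pm\b$ vanishes (otherwise $(\b,\gamma)=\pm(\gamma,\gamma)=0$) and both lie in $\v_i,$ so whichever is a root lies in $S_i\sub R_i.$ Thus each $R_i$ is a sub-supersystem; since the components are pairwise orthogonal and partition $\rcross,$ this gives $R=\op_{i\in Q}R_i.$ Irreducibility of each $R_i$ and the final equivalence both come from one crossing argument: a splitting of a connected set into two nonempty orthogonal pieces would force, along a chain joining the pieces, a pair of consecutive non-orthogonal roots on opposite sides, which is impossible; conversely a disconnected $R$ splits as $S_{i_0}\uplus(\rcross\setminus S_{i_0})$ into nonempty orthogonal pieces, contradicting irreducibility.

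Part (ii) is the same argument run with the given data in place of the canonical components. Setting $\v_k:=\hbox{span}_\bbbf R_k$ for $k=1,2,$ orthogonality of $R_1$ and $R_2$ gives $\v_1\perp\v_2$ and $\v=\v_1+\v_2,$ whence, exactly as above, $\fm|_{\v_k}$ is nondegenerate, $\v_1\cap\v_2=\{0\},$ and $\rcross\cap\v_k=R_k;$ the reflection and nonsingular-string verifications for $R_k\cup\{0\}$ are then word-for-word as in the previous paragraph. The only point demanding genuine care throughout is the bookkeeping for nonsingular roots --- confirming that the elements $\gamma\pm\b$ supplied by the nonsingular-root axiom are nonzero and fall in the correct summand --- but this reduces to the computation $(\b,\gamma)=\pm(\gamma,\gamma)=0$ already noted.
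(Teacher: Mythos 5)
Your proposal is correct and follows essentially the same route as the paper: establish the equivalence relation, decompose $\v$ into the pairwise-orthogonal spans $\v_i$ of the components, use nondegeneracy of $\fm$ on all of $\v$ to get nondegeneracy on each $\v_i$ and the identity $R^\times\cap\v_i=S_i,$ and then read off the sub-supersystem axioms. The paper packages the reflection and nonsingular-string verifications into the single claim $\{\b+k\a\mid k\in\bbbz\}\cap R=\{\b+k\a\mid k\in\bbbz\}\cap R_i,$ but this is the same mechanism you use.
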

\pf ($i$)  It is easy to see that connectedness is an equivalence relation in  $R^\times;$ we just note that for $\a\in R^\times,$ since $R$ spans $\v$ and the form $\fm$ is nondegenerate, there is $\b\in R$ with $(\a,\b)\neq 0.$ This means that $\a$ is connected to $\a$ through the chain $\a,\b,\a.$ Setting $\v_i:=\hbox{span}_\bbbf(S_i),$ one can see  that $\v=\op_{i\in Q}\v_i$ and that the form restricted to $\v_i$ is nondegenerate. Now to complete the proof, it is enough to show that for $\b,\a\in  R_i$ ($i\in Q$), we have $\{\b+k\a\mid k\in\bbbz\}\cap R=\{\b+k\a\mid k\in\bbbz\}\cap R_i.$ So suppose $i\in Q,$ $\a,\b\in R_i$ and $k\in\bbbz$ with $0\neq\b+k\a\in R=\cup_{j\in Q}R_j.$ Then
$(\b+k\a,\sum_{i\neq j\in Q }\v_j)\sub(\v_i,\sum_{i\neq j\in Q }\v_j)=\{0\}.$ But the form is nondegenerate on $\v,$ so there is at least a root in $R_i$ which is not orthogonal to $\b+k\a.$ This means that $\b+k\a\in R_i.$ This completes the proof.

($ii$) It follows  using the same argument as in the previous part.\qed
\medskip

In the following Lemma, we prove that we can define a locally finite root supersystem as a subset of a torsion free abelian group instead of a subset of a vector space.

\begin{lem}
 Suppose that $A$ is an additive abelian group and $\fm:A\times A\longrightarrow \bbbf$ is a group bi-homomorphism, that is $(a+b,c)=(a,c)+(b,c)$ and $(a,b+c)=(a,b)+(a,c)$ for all $a,b,c\in A.$ Suppose that $(a,b)=(b,a)$ for all $a,b\in A$ and  that $A^0:=\{a\in A\mid (a,b)=0\;\;\;\forall b\in A\}=\{0\}.$ Suppose that $R\sub A$ and set$$\begin{array}{l}
\rcross:=R\setminus \{0\}\\\\
\rcross_{re}:=\{\a\in R\mid (\a,\a)\neq0\},\;\;\;\rre:=\rcross_{re}\cup\{0\},\\\\
R_{ns}:=\{\a\in R\mid (\a,\a)=0 \}, \;\;\rim^\times:=R_{ns}\setminus\{0\}.
\end{array}$$
Suppose that the following statements hold:
$$\begin{array}{ll}
(S1)& \hbox{$0\in R,$ and $\hbox{span}_\bbbz(R)= A,$}\\\\
(S2)& \hbox{$R=-R,$}\\\\
(S3)&\hbox{for $\a\in \rre^\times$ and $\b\in R,$ $2(\a,\b)/(\a,\a)\in\bbbz$ and $\b-\frac{2(\b,\a)}{(\a,\a)}\a\in R,$}\\\\
(S4)&\parbox{4.5in}{for $\a,\b\in \rcross_{re},$  there are nonnegative  integers  $p,q$  with $2(\b,\a)/(\a,\a)=p-q$ such that \begin{center}$\{\b+k\a\mid k\in\bbbz\}\cap \rre=\{\b-p\a,\ldots,\b+q\a\},$\end{center}
} \\\\
(S5)&\parbox{4.5in}{for $\a\in \rim$ and $\b\in R$ with $(\a,\b)\neq 0,$
$\{\b-\a,\b+\a\}\cap R\neq\emptyset.$}
\end{array}$$
Extend the map $\fm$ to the  $\bbbf$-bilinear form $\fm_\bbbf:(\bbbf\ot_\bbbz A)\times(\bbbf\ot_\bbbz A)\longrightarrow \bbbf$ defined  by
$$(r\ot   a,s\ot   b)_\bbbf:=rs( a, b);\;r,s\in\bbbf,\;a,b\in A.$$
Then $(\bbbf\ot_\bbbz A,\fm,1\ot R)$ is a locally finite root supersystem. Conversely, if $(\v,\fm, R)$ is a locally finite root supersystem, then for  $A:=\hbox{span}_\bbbz R,$ the map $\fm\mid_{A\times A}$ is a group bi-homomorphism with $A^0=\{0\}$ and the triple  $(A,\fm\mid_{A\times A},R)$ satisfies the conditions $(S1)-(S5)$ above.
\end{lem}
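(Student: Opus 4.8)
The statement is an equivalence between two packagings of the same data, so I would prove the two implications separately. The converse (from a supersystem to the group $A=\hbox{span}_\bbbz R$) is the easy half and amounts to a restriction, whereas the forward implication (from the group data to a supersystem) carries the only genuine difficulty: the nondegeneracy of the extended form.

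First, the converse. Given a locally finite root supersystem $(\v,\fm,R)$, put $A:=\hbox{span}_\bbbz R$. As $\fm$ is $\bbbf$-bilinear it is in particular additive in each slot, so $\fm|_{A\times A}$ is a symmetric group bi-homomorphism with values in $\bbbf$. If $a\in A^0$ then $(a,\b)=0$ for all $\b\in R$, hence $(a,\v)=\{0\}$ because $R$ spans $\v$, and so $a=0$ by nondegeneracy of $\fm$; thus $A^0=\{0\}$. Since $\rre$ and $\rim$ computed inside $A$ agree with those inside $\v$, the conditions $(S1)$--$(S5)$ are just the defining axioms of $\T'_{\bbbf,\bbbf}=\T_{\bbbf,\bbbf}$ transcribed for $A$ (with $(S1)$ adjoining the tautology $\hbox{span}_\bbbz R=A$). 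Hence $(A,\fm|_{A\times A},R)$ satisfies $(S1)$--$(S5)$, and this half is complete.

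For the forward implication, write $W:=\bbbf\ot_\bbbz A$ and extend $\fm$ to $\fm_\bbbf$ as prescribed. I would first observe that $A$ is torsion free: if $na=0$ with $n>0$ then $n(a,b)=0$ for every $b$, and as $\hbox{char}\,\bbbf=0$ this forces $a\in A^0=\{0\}$. Consequently $a\mapsto 1\ot a$ is injective and $W\cong\bbbf\ot_\bbbq A_\bbbq$ with $A_\bbbq:=\bbbq\ot_\bbbz A$. Identifying $R$ with $1\ot R$ and using $(1\ot\a,1\ot\b)_\bbbf=(\a,\b)$, every axiom of $\T'_{\bbbf,\bbbf}$ transfers verbatim \emph{except} nondegeneracy: $1\ot\a$ is real (resp.\ nonsingular) exactly when $\a$ is, $0\in 1\ot R=-(1\ot R)$ spans $W$ by $(S1)$, the reflection and integrality statements come from $(S3)$, the root-string property is $(S4)$, the nonsingular axiom is $(S5)$, and $\rre\neq\{0\}$ is preserved.

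The crux is therefore that $\fm_\bbbf$ is nondegenerate on $W$. This would fail for an arbitrary $\bbbf$-valued form with $A^0=\{0\}$ (irrational Gram entries can create an isotropic vector after base change), so the axioms must intervene. Since the connected components of $\rcross$ are mutually orthogonal it suffices to treat $R$ connected, and then I would prove that, after fixing one real root $\a_0$, every value $(\a,\b)$ with $\a,\b\in R$ lies in $\bbbq\,(\a_0,\a_0)$. Granting this, $\fm=(\a_0,\a_0)g$ with $g$ a $\bbbq$-valued form having $A^0=\{0\}$; its $\bbbq$-extension $g_\bbbq$ to $A_\bbbq$ has trivial radical (any $x\in A_\bbbq$ equals $\frac{1}{m}(1\ot a)$, so $g_\bbbq(x,A_\bbbq)=\{0\}$ gives $(a,A)=\{0\}$, i.e.\ $a=0$), and a nondegenerate $\bbbq$-valued form stays nondegenerate after base change to $\bbbf$; hence $\fm_\bbbf=(\a_0,\a_0)g_\bbbf$ is nondegenerate. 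To reach the rationality statement I would reduce all pairings to real lengths: for $\a\in\rre^\times$ and $\d\in\rim$ one has $(\a,\d)=\frac{1}{2}\cdot\frac{2(\a,\d)}{(\a,\a)}\cdot(\a,\a)\in\bbbq\,(\a,\a)$ by $(S3)$, and for $\d,\gamma\in\rim^\times$ with $(\d,\gamma)\neq0$ the root $\d+s\gamma$ of Proposition \ref{conjugate}(i) is real with $(\d+s\gamma,\d+s\gamma)=2s(\d,\gamma)$, so $(\d,\gamma)\in\bbbq\,(\a,\a)$ once the relevant real length is controlled. It thus remains to show that all real roots have pairwise rational length ratios. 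Within one connected component of $\rre^\times$ this is Lemma \ref{rational}(i); to bridge two components I would take real $\a,\b$ and a nonsingular $\d$ with $(\a,\d),(\b,\d)\neq0$, push $\a,\b$ off $\d$ by $(S5)$ to roots $\a+s_1\d,\b+s_2\d$, and combine Lemma \ref{serg2} with Proposition \ref{conjugate}(i): whenever one of these is real, integrality of its reflection coefficient against the other immediately yields $(\a,\a)/(\b,\b)\in\bbbq$, and the remaining purely nonsingular case is closed by fusing the two into a real root whose length is a $\bbbz$-combination of $(\a,\a)$ and $(\b,\b)$. I expect this last bridging step — a short but sign-sensitive finite case analysis, in the spirit of Serganova's rank-$\le 3$ arguments — to be the main obstacle; everything else is bilinear bookkeeping or direct citation.
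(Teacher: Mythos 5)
Your converse direction, the torsion-freeness of $A$, the transfer of the axioms to $1\ot R$, and the reduction of everything to nondegeneracy of $\fm_\bbbf$ on a connected $R$ are all fine. The fatal problem is the rationality claim on which your nondegeneracy argument rests: it is \emph{false} that for connected $R$ every value $(\a,\b)$, $\a,\b\in R$, lies in $\bbbq\,(\a_0,\a_0)$ for a fixed real root $\a_0$. The paper's own classification contains the counterexample, namely type $D(2,1,\lam)$: take $R=\{0,\pm\a_1,\pm\a_2,\pm\a_3\}\cup\{\frac{1}{2}(\pm\a_1\pm\a_2\pm\a_3)\}$ with $(\a_i,\a_j)=\d_{i,j}c_i$ and $c_1+c_2+c_3=0$. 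One checks directly that $(S1)$--$(S5)$ hold, that $A^0=\{0\}$, and that $R$ is connected (each nonsingular root pairs nontrivially with every $\a_i$); yet the only constraint the axioms impose on the lengths is $c_1+c_2+c_3=0$, so $c_2/c_1$ may be irrational and $\fm$ is then not a scalar multiple of any $\bbbq$-valued form. Your own bridging device detects exactly this obstruction: fusing two nonsingular roots there produces $\pm\a_3$, whose length is the $\bbbz$-combination $-c_1-c_2$, which is perfectly compatible with $c_2/c_1\notin\bbbq$. So the ``sign-sensitive case analysis'' you deferred cannot be completed, and the factorization $\fm=(\a_0,\a_0)g$ with $g$ rational collapses. (Rationality of length ratios does hold \emph{within} one irreducible component of $\rre$, by Lemma \ref{rational}($i$); it fails precisely across components, through the nonsingular roots.)

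The paper avoids this by never rationalizing the form itself, only the \emph{normalized} pairings $2(\cdot,\a)/(\a,\a)$, which are integers by $(S3)$. It first observes that $\fm_\bbbf$ restricted to $\v_\bbbq:=\hbox{span}_\bbbq A$ is nondegenerate --- a general consequence of $A^0=\{0\}$, quoted from \cite[Lemma 1.6]{AYY} --- so that $(\v_\bbbq,\fm_\bbbq,R)\in\T_{\bbbq,\bbbf}$ and the lemmas of Subsection 3.1 apply; in particular the form is nondegenerate on $\hbox{span}_\bbbq\rre$ and, by Proposition \ref{conjugate}($ii$), $\rre\cup\{\a^*\}$ spans $\v_\bbbq$ for a fixed $\a^*\in\rim^\times.$ Given $v=r\a^*+\sum_i r_i\a_i$ in the radical of $\fm_\bbbf$ with $r\in\{0,1\}$ and $\a_i\in\rre,$ one expands each $r_i$ over a $\bbbq$-basis $\{1,x_j\}$ of $\bbbf$ and computes $2(v,\a)/(\a,\a)=0$ for $\a\in\rre^\times$: since all $2(\a_i,\a)/(\a,\a)$ and $2(\a^*,\a)/(\a,\a)$ are integers, the coefficient of each $x_j$ vanishes separately, forcing each $\sum_i s_i^j\a_i$ into the radical of the form on $\hbox{span}_\bbbq\rre$ and hence to zero; thus $v\in\v_\bbbq$ and $v=0.$ If you want to salvage your write-up, replace the rationality lemma by this division-by-$(\a,\a)$ argument; the remainder of your outline can stand.
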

\pf  We first note that since $A^0=\{0\},$ it follows that $A$ is a torsion free abelian group and so we can identify $A$ with $1\ot A:=\{1\ot a\mid a\in A\}\sub \bbbf\ot_\bbbz A.$ We next mention that as in the proof of Lemma \ref{decom}, $R\setminus\{0\}$ can be written as a disjoint union of subsets  $R_i,$ where   $i$ runs over a nonempty  index set $ Q,$ such that
\begin{itemize}
\item for each $i\in Q,$  $R_i$ cannot be written as a disjoint union of  two orthogonal nonempty subsets,
\item  there is no $0\neq a\in \hbox{span}_\bbbz R_i$ ($i\in Q$) with $(a,\hbox{span}_\bbbz R_i)=\{0\},$
\item  for each $i\in Q,$ $R_i$ satisfies  corresponding $(S2)-(S5)$ above,
\item $(R_i,R_j)=\{0\}$ for all $i,j\in Q$ with $i\neq j.$
\end{itemize}
  So without loss of generality, we assume that $R\setminus\{0\}$ cannot be written as a disjoint union of two orthogonal  nonempty subsets. Set $\v_\bbbq:=\hbox{span}_\bbbq A\sub\v:=\bbbf\ot_\bbbz A.$
One can see that the form $\fm_\bbbf$ restricted to $\v_\bbbq$ is nondegenerate (see \cite[Lemma 1.6]{AYY}).
So  setting $\fm_\bbbq:=\fm_\bbbf\mid_{\v_\bbbq\times \v_\bbbq},$ we have $(\v_\bbbq,\fm_\bbbq,R)\in \T'_{\bbbq,\bbbf}=\T_{\bbbq,\bbbf};$ in particular, $\rre$ is  a locally finite root system in $\v_\bbbq$ and so by  Lemma \ref{real roots},
\begin{equation}\label{nondeg}
\parbox{4.4in}{
the form  $\fm_\bbbq$ restricted to $(\v_\bbbq)_{re}:=\hbox{span}_\bbbq \rre$ is nondegenerate.}
\end{equation}

To show that $(\v,\fm_\bbbf, R)\in \T'_{\bbbf,\bbbf}=\T_{\bbbf,\bbbf},$ it is enough to show that the form $\fm_\bbbf$ on $\v$ is nondegenerate. We first assume $\rim\neq\{0\}.$ Since $(\v_\bbbq,\fm_\bbbq, R)\in \T_{\bbbq,\bbbf}$ is  irreducible,  by Lemma \ref{conjugate},  for a fixed $\a^*\in\rim^\times,$ $\rre\cup\{\a^*\}$ is a spanning  set for $\v_\bbbq.$ This implies that $A\sub\hbox{span}_\bbbq(\rre\cup\{\a^*\})$ and so $\v=\hbox{span}_\bbbf A\sub\hbox{span}_\bbbf(\rre\cup\{\a^*\}).$ Suppose that for $\a_1,\ldots,\a_n\in \rre$ and $r,r_1,\ldots,r_n\in\bbbf,$ $v:=r\a^*+\sum_{i=1}^nr_i\a_i$ is an element of the radical of $\fm_\bbbf.$ Without loss of generality, we  assume $r=0,1.$ Suppose that $\{1,x_j\mid j\in J\}$ is a basis for $\bbbf$ over $\bbbq.$
For  $1\leq i\leq n,$ there are $s_i,s_i^j\in\bbbq$   such that $r_i=s_i1+\sum_j s_i^jx_j.$ Now for each $\a\in\rre^\times,$ we have
\begin{eqnarray*}
0&=&2(r\a^*+\sum_{i=1}^nr_i\a_i,\a)/(\a,\a)\\&=&\frac{2(r\a^*,\a)}{(\a,\a)}+\sum_{i=1}^n\frac{r_i2(\a_i,\a)}{(\a,\a)}\\
&=&\frac{2(r\a^*,\a)}{(\a,\a)}+\sum_{i=1}^n\frac{(s_i1+\sum_j s_i^jx_j)2(\a_i,\a)}{(\a,\a)}\\
&=&\frac{2(r\a^*,\a)}{(\a,\a)}+\sum_{i=1}^ns_i\frac{2(\a_i,\a)}{(\a,\a)}1+\sum_{i=1}^n \sum_j s_i^j\frac{2(\a_i,\a)}{(\a,\a)}x_j\\
&=&(\frac{2(r\a^*,\a)}{(\a,\a)}+\sum_{i=1}^ns_i\frac{2(\a_i,\a)}{(\a,\a)})1+\sum_{i=1}^n \sum_j s_i^j\frac{2(\a_i,\a)}{(\a,\a)}x_j.
\end{eqnarray*}
 Now as $\frac{2(r\a^*,\a)}{(\a,\a)},\frac{2(\a_i,\a)}{(\a,\a)}\in\bbbz$ $(1\leq i\leq n)$ and $\{1,x_j\mid j\in J\}$ is a basis for the $\bbbq$-vector space $\bbbf,$ for each $j\in J$ and  $\a\in\rre^\times,$ we have  $\sum_{i=1}^ns_i^j\frac{2(\a_i,\a)}{(\a,\a)}=0.$  Therefore, for all $j\in J$ and $\a\in\rre^\times,$ $ (\sum_{i=1}^ns_i^j \a_i,\a)=0.$  This together with (\ref{nondeg}) implies that for $j\in J,$  $\sum_{i=1}^ns_i^j \a_i=0$ and so $$v=r\a^*+\sum_{i=1}^nr_i\a_i=r\a^*+\sum_{i=1}^ns_i\a_i\in\hbox{span}_\bbbq(\rre\cup\{\a^*\}).$$ Thus $v$ is an element of the radical of $\fm_\bbbq.$ Now as $\fm_\bbbq$ is nondegenerate,  we get $v=0$ and so we are done in this case. Using the same argument as above, one can get the result if $\rim=\{0\}.$ The reverse implication is easy to see.\qed

\section{classification of irreducible locally finite root supersystems}
In this section, we give the classification of irreducible locally finite root supersystems. In \cite{serg}, the author gives the classification of generalized root systems (finite root supersystems in our sense). To start the classification, she uses the fact that each finite dimensional subspace  $\w$ of  a vector space equipped with a  symmetric bilinear form has an orthogonal complement provided that the form restricted to $\w$ is nondegenerate and works with a certain  orthogonal decomposition. As we are working with possibly infinite dimensional vector spaces, the above stated fact cannot be used in our case. We divided irreducible locally finite root supersystems in two disjoint classes and classify the elements of  each class separately.


\begin{deft}{\rm
For an irreducible  locally finite root supersystem {\small $(\v,\fm,R)$}, we say $R$ is of {\it real type} if $\v=\v_{re}=\hbox{span}_\bbbf \rre;$ otherwise we say it is of {\it imaginary type}.}
\end{deft}
\begin{Example}\label{im-sys}
{\rm (1) Suppose that $T$ is an index set with $|T|\geq 2.$
Take $\u$ to be a vector space with a basis $\{\ep_t\mid t\in T\}$ and consider  the bilinear form $$\fm:\u\times \u\longrightarrow \bbbf;\;\;(\ep_t,\ep_{t'})\mapsto \d_{t,t'} \;\;(t,t'\in T)$$ on $\u.$
Take   $S:=\{\pm\ep_t\pm\ep_{t'}\mid t,t'\in T\}$ to be the locally finite root system of type $C_T$ in $\u.$ Consider a one dimensional vector space $\bbbf\a^*$ and set $\v:=\bbbf\a^*\op\u.$  Fix  $t_0\in T.$ Extend the form $\fm$ to a symmetric bilinear  form on $\v$ denoted again by $\fm$ and defined by $$(\a^*,\a^*):=0,\;(\a^*,\ep_{t_0}):=1,\;(\a^*,\ep_t):=0;\; t\in T\setminus\{t_0\}.$$ Set $R:=S\cup\pm\{\a^*,\a^*-2\ep_{t_0},\a^*-(\ep_{t_0}\pm\ep_t)\mid t\in T\setminus\{t_0\}\}.$ Then $(\v,\fm,R)$ is an irreducible locally finite root supersystem of imaginary type which we refer to as type $\dot C(0,T).$ If $T$ is an infinite set and  $\{T_k\mid k\in K\}$ is the class of all finite subsets of $T$ containing $t_0$ with cardinal number greater than 1, then $R=\cup_{k\in K}R_k,$ where $$R_k:=\{\pm\ep_t\pm\ep_{t'}\mid t,t'\in T_k\}\cup\pm\{\a^*,\a^*-2\ep_{t_0},\a^*-(\ep_{t_0}\pm\ep_t)\mid t\in T_k\setminus\{t_0\}\},$$ in other words, $R$ is a direct union of finite root supersystems $\dot C(0, T_k),$ $k\in K.$
 }

{\rm (2) Suppose that $T$ is an index set with $|T|\geq2$.
Take $\u$ to be a vector space with a basis $\{\ep_t\mid t\in T\}$ and consider  the bilinear form $$\fm:\u\times \u\longrightarrow \bbbf;\;\;(\ep_t,\ep_{t'})\mapsto \d_{t,t'} \;\;(t,t'\in T)$$ on $\u.$
Take   $S:=\{\ep_t-\ep_{t'}\mid t,t'\in T\}$ to be the locally finite root system of type $\dot A_T$ in $\u':=\hbox{span}_\bbbf S.$
  Consider  a one dimensional vector space $\bbbf\a^*$ and set $\v:=\bbbf\a^*\op\u'.$  Fix an element $t_0\in T.$ Extend the form $\fm$ to a symmetric bilinear  form on $\v$ denoted again by $\fm$ and defined by $$(\a^*,\a^*):=0,\;(\a^*,\ep_t-\ep_{t_0}):=-1;\; t,t'\in T\setminus\{t_0\}.$$ Next set $R:=S\cup\pm\{\a^*,\a^*-(\ep_{t_0}-\ep_t)\mid t\in T\setminus\{t_0\}\}.$ Then $(\v,\fm,R)$ is an  irreducible  locally finite root supersystem of imaginary type which we refer to as type $\dot A(0,T).$ If $T$ is an infinite set and  $\{T_k\mid k\in K\}$ is the class of all finite subsets of $T$ containing $t_0$ with cardinal number greater than 1, then for $$R_k:=\{\pm(\ep_t-\ep_{t'})\mid t,t'\in T_k\}\cup\pm\{\a^*,\a^*-(\ep_{t_0}-\ep_t)\mid t\in T_k\setminus\{t_0\}\},$$ we have $R=\cup_{k\in K}R_k.$ This means that  $R$ is a direct union of finite root supersystems $R_k$ ($k\in K$) of type $\dot A(0,T_k).$
}

{\rm(3)
 Suppose that $T,P$ are two index sets of cardinal numbers greater than 1  such that if $S,T$ are both finite, then $|T|\not=|P|.$ Suppose that  $\v'$ is a vector space with a basis $\{\ep_t,\d_{p}\mid t\in T,\; p\in P\}.$ We equip $\v'$ with a symmetric bilinear form $$\fm:\v'\times\v'\longrightarrow \bbbf;\;(\ep_t,\ep_{t'})=\d_{t,t'},\;(\d_p,\d_{p'})=-\d_{p,p'},(\d_p,\ep_t)=0.$$  Take    $S_1:=\{\ep_t-\ep_{t'}\mid t,t'\in T\}$ which is a locally finite root system of type $\dot A_T$ in $\u_1:=\hbox{span}_\bbbf\{\ep_t-\ep_{t'}\mid t,t'\in T\}$ and $S_2:=\{\d_p-\d_{p'}\mid p,p'\in P\}$ which is a locally finite root system of type $\dot A_P$ in $\u_2:=\hbox{span}_{\bbbf}\{\d_p-\d_{p'}\mid p,p'\in P\}.$ Now set $\v:=\hbox{span}_\bbbf\{\ep_t-\d_p\mid t\in T,p\in P\}+\u_1+\u_2$  and take $R:=S_1\cup S_2\cup \pm\{\ep_t-\d_p\mid t\in T,p\in P\}.$ Then $(\v,\fm\mid_{\v\times\v},R)$ is an  irreducible  locally finite root supersystem of imaginary type with $\rim=\{0\}\cup\pm\{\ep_t-\d_p\mid t\in T,p\in P\}.$ We refer to this locally finite root supersystem  as a locally finite root supersystem of type  $\dot A(T,P).$ Now suppose that  at least one of $T$ and $P$ are infinite. Fix $t_0\in T,p_0\in P$ and  suppose that $\{T_k\mid k\in K\}$
(resp. $\{P_m\mid m\in M\}$) is the class of all finite subsets of $T$ (resp. $P$) containing $t_0$ (resp. $p_0$) with cardinal number greater than 1. Take $\Lam:=\{(k,m)\in K\times M\mid |T_k|\neq|P_m|\}.$  Now for $(k,m)\in \Lam,$ set{\small$$R_{(k,m)}:=\{\ep_t-\ep_{t'}\mid t,t'\in T_k\}\cup\{\d_p-\d_{p'}\mid p,p'\in P_m\}\cup\{\pm(\ep_t-\d_p)\mid t\in T_k,p\in P_m\}.$$}

\noindent Then $R_{(k,m)}$ is a finite root supersystem  of type $\dot A(T_k,P_m)$ in its $\bbbf$-span and $R$ is the direct union of $\{R_{(k,m)}\mid (k,m)\in \Lam\}.$

}
\end{Example}

\begin{Example}
{\rm (1)
 Suppose that $S_1:=\{0,\pm2\dot\ep_1\}$ and $S_2:=\{0,\pm2\dot\d_1\}$ are finite root systems of type $A_1$ in $\hbox{span}_\bbbf\dot\ep_1$ and $\hbox{span}_\bbbf\dot\d_1$ respectively.  Normalize the forms on $\hbox{span}_\bbbf\dot\ep_1$ and $\hbox{span}_\bbbf\dot\d_1$ such that $(\dot\ep_1,\dot\ep_1)=-(\dot\d_1,\dot\d_1)$ and extend them to a form $\fm$ on $\hbox{span}_\bbbf\{\dot\ep_1,\dot\d_1\}$ with $(\dot\ep_1,\dot\d_1):=0.$ Set $R:=S_1\cup S_2\cup \{\pm\dot\ep_1\pm\dot\d_1\}.$ Then $R$ is an irreducible locally finite root supersystem of real type in $\hbox{span}_\bbbf\{\dot\ep_1,\dot\d_1\}$ which  we refer to as type $A(1,1).$}

{\rm (2) Suppose that  $S_1:=\{0,\pm2\dot\ep_1\}$ and $S_2:=\{0,\pm 2\d_t,\pm\d_t\pm\d_{t'}\mid t,t'\in T,\;t\neq t'\}$ are locally  finite root systems of type $A_1$ and $C_T$ respectively, where $T$ is an index set with $|T|\geq2.$ Normalize the forms on $\hbox{span}_\bbbf\dot\ep_1$ and $\hbox{span}_\bbbf\{\d_t\mid t\in T\}$ such that $(\dot\ep_1,\dot\ep_1)=-(\d_t,\d_{t});$ $t\in T.$ Set $R:=S_1\cup S_2\cup \{\pm\dot\ep_1\pm\d_t\mid t\in T\}.$ Then $R$ is an irreducible locally finite root supersystem of real type in $\hbox{span}_\bbbf\{\dot\ep_1,\dot\d_t\mid t\in T\}$ which  we refer to as type $C(1,T).$} \end{Example}

\begin{lem}\label{finite}
Suppose that $(\v,\fm,R)$ is  a locally finite root supersystem  with $\rre\neq \{0\}$ and that  $R_{re}=\op_{i\in I}\rre^i$ is the decomposition of the locally finite root system $\rre$ into irreducible  subsystems. Set $\v_{re}^i:=\hbox{span}_\bbbf\rre^i,$ $i\in I,$ and suppose $\v=\op_{i\in I}\v_{re}^i.$  For $i\in I,$ take $p_i:\v\longrightarrow \v_{re}^i$ to be the projection map on $\v_{re}^i,$  then we have  the following  statements:

(i) For   $i\in I$ and  $\a\in R,$ we have  $p_i(\a)\in\hbox{span}_\bbbq \rre^i;$ in particular, we have either $p_i(\a)=0$ or  $(p_i(\a),p_i(\a))\neq0.$

(ii) If $\rre$ is a finite root system, then $R$ is a finite root supersystem.
\end{lem}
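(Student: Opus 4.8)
The plan is to exploit that, by Lemma \ref{real roots}(b), $\rre$ is a locally finite root system in $\v_{re}$, so each component $\rre^i$ is an irreducible locally finite root system in $\v_{re}^i$, and that the decomposition $\v=\op_{i\in I}\v_{re}^i$ is orthogonal (the $\rre^i$ are pairwise orthogonal). In particular each $p_i$ is the orthogonal projection, so $(\gamma,\a)=(\gamma,p_i(\a))$ whenever $\gamma\in\v_{re}^i$ and $\a\in\v$. For part (i), the case $\a\in\rre^\times$ is immediate: then $\a$ lies in a single component $\rre^{i_0}$ and $p_i(\a)=\d_{i,i_0}\a$. So I would fix $\a\in\rim$ and $i\in I$ with $p_i(\a)\neq0$ and show $p_i(\a)\in\hbox{span}_\bbbq\rre^i$. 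Writing $p_i(\a)$ as a finite $\bbbf$-combination of roots of $\rre^i$ and using local finiteness, I obtain a finite full subsystem $F\sub\rre^i$ with $p_i(\a)\in W_0:=\hbox{span}_\bbbf F$, and I choose roots $\gamma_1,\ldots,\gamma_d\in F$ forming an $\bbbf$-basis of $W_0$. By condition (4) defining $\T_{\bbbf,\bbbf}$, each $\check\gamma_k(p_i(\a))=2(\gamma_k,\a)/(\gamma_k,\gamma_k)\in\bbbz$. Expanding $p_i(\a)=\sum_l x_l\gamma_l$ turns this into $Mx\in\bbbz^d$, where $M=(\check\gamma_k(\gamma_l))_{k,l}$ is the integral Cartan matrix of $F$; as $M$ is invertible, $x=M^{-1}(Mx)\in\bbbq^d$ and hence $p_i(\a)\in\hbox{span}_\bbbq\rre^i$. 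For the ``in particular'' clause, once $0\neq p_i(\a)\in\hbox{span}_\bbbq\rre^i$, the form $\fm|_{\v_{re}^i}$ is a nonzero invariant form on the irreducible system $\rre^i$ (invariance by Lemma \ref{real roots}(a)), so the last assertion of Lemma \ref{inv-form} yields $(p_i(\a),p_i(\a))\neq0$.

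For part (ii), assume in addition that $\rre$ is finite; then $I$ is finite, every $\rre^i$ is finite, and $\v=\op_{i\in I}\v_{re}^i$ is finite dimensional. Since $\rre$ is already finite, it suffices to show $\rim$ is finite. Fix a root base $\D^i$ of each $\rre^i$; then $\D:=\cup_{i\in I}\D^i$ is a basis of $\v$, and since $\fm$ is nondegenerate on $\v$ and $\D$ is linearly independent, the simple coroots $\{\check\gamma\mid\gamma\in\D\}$ are linearly independent, hence form a basis of $\v^*$. Consequently the map $v\mapsto(\check\gamma(v))_{\gamma\in\D}$ is injective on $\v$. For $\a\in\rim$ and $\gamma\in\D\sub\rre^\times$, Lemma \ref{serg2}(1) gives $\check\gamma(\a)=2(\gamma,\a)/(\gamma,\gamma)\in\{0,\pm1,\pm2\}$, so the tuple $(\check\gamma(\a))_{\gamma\in\D}$ ranges over the finite set $\{0,\pm1,\pm2\}^{\D}$. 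By injectivity $\rim$ is finite, whence $R=\rre\cup\rim$ is finite.

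The main obstacle is the rationality step in part (i): a priori $p_i(\a)$ is only an $\bbbf$-linear combination of roots, and $p_i(\a)$ itself need not be a root, so results about roots cannot be applied to it directly. The resolution is to pass to a finite full subsystem $F$ (legitimate because $\rre^i$ is locally finite) and to use the invertibility over $\bbbq$ of its integral Cartan matrix, which forces the coordinates of $p_i(\a)$ to be rational; the positivity input of Lemma \ref{inv-form} then converts nonvanishing of $p_i(\a)$ into nonvanishing of $(p_i(\a),p_i(\a))$. Everything else---orthogonality of the decomposition, integrality from condition (4), and the uniform bound from Lemma \ref{serg2}(1)---is routine once this rationality is secured.
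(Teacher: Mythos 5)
Your proof is correct, but part (ii) takes a genuinely different route from the paper's. For part (i) you are essentially reconstructing the argument the paper delegates to Serganova's Corollary 1.7: orthogonality of the decomposition makes $p_i(\a)$ pair integrally with the coroots of $\rre^i$, inverting the integral, invertible matrix of a finite full subsystem forces rational coordinates, and the nonvanishing of $(p_i(\a),p_i(\a))$ comes from the positivity clause of Lemma \ref{inv-form} exactly as the paper intends; the one step you should make explicit is why $M$ is invertible, namely that $\fm$ restricted to $\hbox{span}_\bbbq \rre^i$ is a nonzero scalar multiple of a positive definite form by Lemma \ref{inv-form}, so the Gram matrix of any linearly independent set of roots is nonsingular. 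For part (ii) the paper proceeds structurally: it decomposes $R$ into irreducible sub-supersystems via Lemma \ref{decom}, uses part (i) to show each summand has nonzero real part (so there are finitely many summands), and then applies Proposition \ref{conjugate}(ii), which identifies the nonsingular roots of each irreducible summand with $\w\d\cup-\w\d$ for a single $\d$ --- a finite set because the Weyl group of a finite root system is finite. You instead bound $\rim$ directly: the coroots attached to a root base $\D$ of the finite system $\rre$ form a basis of $\v^*$ by nondegeneracy of $\fm$ on $\v=\v_{re}$, so $v\mapsto(\check\gamma(v))_{\gamma\in\D}$ is injective, and Lemma \ref{serg2}(1) confines its values on $\rim$ to the finite set $\{0,\pm1,\pm2\}^{\D}$. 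Your argument is more elementary and self-contained, avoiding both Lemma \ref{decom} and Proposition \ref{conjugate}; the paper's version buys the additional structural facts (finiteness of the set of irreducible summands and the Weyl-orbit description of the nonsingular roots) that it reuses in the classification. Both are valid proofs of the stated lemma.
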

\pf  $(i)$ It follows using the same argument as in \cite[Corollary 1.7]{serg} and Lemma \ref{inv-form}.

$(ii)$ We know from Lemma \ref{decom} that $R$ is a direct sum of nonzero irreducible sub-supersystems, say $R=\op_{k\in K}R_k$ in  which $K$ is an index set. We claim that for all $k\in K,$ $(R_k)_{re}\neq \{0\}.$ For this, take $K_1$ to be the subset of $K$ consisting of the indices $k$ for which $(R_k)_{re}\neq\{0\}$ and set $K_2:=K\setminus K_1.$ Then we have $R=(\op_{k\in K_1}R_k)\op(\op_{k\in K_2}R_{k})$ and that $(\op_{t\in K_1}R_t,\op_{t\in K_2}R_t)=\{0\}.$ Now suppose $K_2\neq \emptyset$ and fix $k\in K_2.$ Let $0\neq \d\in R_k,$ then $\d=\sum_{i\in I}p_i(\d).$ Since $\d\neq 0,$ there is $j\in I$ with  $p_j(\d)\neq 0.$ So by part ($i$), $(\d,p_j(\d))=(p_j(\d),p_j(\d))\neq 0.$ Therefore $(\d,\v_{re}^j)\neq \{0\}$ and so  $(\d,\rre^j)\neq \{0\}.$ Now as $\rre^j\sub\rre=\cup_{k\in K_1}(R_k)_{re},$ one can find $k'\in K_1$ with $(\d,(R_{k'})_{re})\neq\{0\}.$ So  $\{0\}\neq (\d, R_{k'})\in (\op_{t\in K_2}R_t,\op_{t\in K_1}R_t)=\{0\}$ which is a contradiction. This means that $K_2=\emptyset$ and $K=K_1.$ Now as  $\rre=\uplus_{k\in K_1}(R_k)_{re}$ is a finite root system, we have $|K_1|<\infty.$ Thus to complete the proof, it is enough to show that $R_k$ is a finite root supersystem  for all $k\in K.$ Since $(R_k)_{re}$ is a finite root system,   its Weyl group is a finite group. Now using Proposition \ref{conjugate}($ii$), we get that $|(R_k)_{ns}|<\infty$ and so we are done.
\qed

\medskip

{\it From now till the end of this section,  we assume $(\v,\fm,R)$ is an irreducible locally finite root supersystem with $\rre\neq \{0\}.$}  We suppose $\rre=\op_{i\in I}\rre^i$ is the decomposition of  the locally finite root system $\rre$ into irreducible subsystems. For $i\in I,$ we set $$\v_{re}^i:=\hbox{span}_\bbbf\rre^i.$$

\begin{lem}\label{tame}
If $\d\in\rim^\times$ and $i\in I,$
then we have $(\d,\rre^i)\neq\{0\}.$
\end{lem}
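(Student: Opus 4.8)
The plan is to argue by contradiction, exploiting the transitivity of the Weyl group on nonsingular roots (Proposition \ref{conjugate}(ii)) together with the irreducibility of $R$. Suppose instead that $(\d,\rre^{i_0})=\{0\}$ for some $\d\in\rim^\times$ and some index $i_0\in I$. The aim is to show that this forces $\rre^{i_0}$ to be orthogonal to all of $\rcross\setminus\rre^{i_0}$, which contradicts the assumption that $R$ is irreducible.

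First I would record two elementary facts about the Weyl group $\w$. Each generator $r_\a$ (for $\a\in\rcross_{re}$) is an isometry of $(\v,\fm)$: a direct expansion of $(r_\a u,r_\a v)$ collapses to $(u,v)$, so every $w\in\w$ preserves the form; this is recorded for roots in Lemma \ref{real roots}(a). Moreover, writing $\a\in(\rre^j)^\times$, the reflection $r_\a$ fixes $\v_{re}^k$ pointwise for $k\neq j$ (since $(\rre^k,\a)=\{0\}$) and stabilizes $\rre^j$ (since $\rre^j$ is a subsystem). Hence every $w\in\w$ satisfies $w(\rre^i)=\rre^i$ for each $i\in I$; that is, orthogonality to the fixed component $\rre^{i_0}$ is a $\w$-invariant condition.

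Combining these, for any $w\in\w$ and any $\b\in\rre^{i_0}$ I have $(w\d,\b)=(\d,w^{-1}\b)$ with $w^{-1}\b\in\rre^{i_0}$, so the assumption $(\d,\rre^{i_0})=\{0\}$ yields $(w\d,\rre^{i_0})=\{0\}$, and likewise $(-w\d,\rre^{i_0})=\{0\}$. By Proposition \ref{conjugate}(ii) every nonsingular root lies in $\w\d\cup-\w\d$, so I conclude $(\rim,\rre^{i_0})=\{0\}$. Since distinct irreducible components of the locally finite root system $\rre$ are mutually orthogonal, I also have $(\rre^j,\rre^{i_0})=\{0\}$ for every $j\neq i_0$. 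Writing $\rcross=(\rre^{i_0})^\times\uplus X$, where $X:=\rim^\times\cup\bigcup_{j\neq i_0}(\rre^j)^\times$, both pieces are nonempty (note $\d\in\rim^\times$ and $\rre^{i_0}\neq\{0\}$) and mutually orthogonal, contradicting the irreducibility of $R$.

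The argument is essentially forced once this strategy is fixed, and no difficulty arises from the possible infinite-dimensionality of $\v$, since everything is phrased purely in terms of orthogonality relations among roots. The one step deserving care is the observation that $\w$ stabilizes each component $\rre^i$, as it is precisely this invariance that allows Proposition \ref{conjugate}(ii) to propagate the single relation $(\d,\rre^{i_0})=\{0\}$ to all of $\rim$.
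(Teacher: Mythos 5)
Your proof is correct and is essentially the paper's own argument read in the contrapositive: the paper notes that irreducibility forces $(\rre^i,\rim)\neq\{0\}$ and then uses the $\w$-invariance of $\rre^i$ together with Proposition \ref{conjugate}(ii) to spread this to every nonsingular root, which is exactly the mechanism you use to derive a contradiction. No substantive difference.
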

\pf Since  $R=(\rre^i\setminus\{0\})\uplus(\cup_{j\in I\setminus\{i\}}\rre^j\cup\rim),$ the  irreducibility of $R$ implies that $(\rre^i,\rim)\neq \{0\}.$ Now as $\rre^i$ is invariant under the Weyl group, we are done using Lemma \ref{conjugate}($ii$).
\qed

\medskip

\subsection{Imaginary type}
In this subsection, \emph{we assume $(\v,\fm,R)$ is of imaginary type}. So $\v\neq\v_{re}.$ Fix  $\a^*\in\rim^{\times},$ then by Lemma \ref{conjugate}($ii$), $$\v=\bbbf \a^*\op \v_{re}.$$
 We suppose  $$p_*:\v\longrightarrow \bbbf\a^*$$ is the canonical projection map of $\v$ on $\bbbf\a^*$  with respect to the decomposition $\v=\bbbf \a^*\op \v_{re}.$  For $\a\in R,$ take $p_\a\in\bbbf$ to be defined by $p_*(\a)=p_\a\a^*.$

\begin{lem}\label{alphastar}
For  $\a\in \rre,$ $p_\a=0$ and for $\a\in\rim\setminus\{0\},$  $p_\a=\pm1.$
\end{lem}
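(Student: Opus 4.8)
The plan is to split into the two cases, the first of which is immediate and the second of which rests on a $\w$-invariance property of $p_*$. For $\a\in\rre$ there is nothing to do: since $\v_{re}=\hbox{span}_\bbbf\rre$ contains $\a$ and $\v_{re}=\ker p_*$ with respect to the decomposition $\v=\bbbf\a^*\op\v_{re}$, we get $p_*(\a)=0$, i.e. $p_\a=0$.

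The substantive case is $\a\in\rim\setminus\{0\}$, and the key is to observe that $p_*$ is constant on $\w$-orbits. First I would check that $\v_{re}$ is stable under $\w$: each generator $r_\b$ of $\w$ has $\b\in\rre^\times\sub\v_{re}$, so for $v\in\v_{re}$ the vector $r_\b(v)=v-\frac{2(v,\b)}{(\b,\b)}\b$ again lies in $\v_{re}$; hence $w(\v_{re})\sub\v_{re}$ for every $w\in\w$. Next, the same formula applied to $\a^*$ gives $r_\b(\a^*)=\a^*-\frac{2(\a^*,\b)}{(\b,\b)}\b\in\a^*+\v_{re}$, so each generator, and therefore (by induction on word length) every $w\in\w$, fixes the class of $\a^*$ in $\v/\v_{re}$. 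Because $\v/\v_{re}$ is one dimensional and spanned by that class, $\w$ acts trivially on $\v/\v_{re}$; since $p_*$ is exactly the quotient map $\v\to\v/\v_{re}\cong\bbbf\a^*$ (projection along $\v_{re}$), this yields $p_*(wv)=p_*(v)$ for all $v\in\v$ and $w\in\w$. In particular $p_*(w\a^*)=p_*(\a^*)=\a^*$.

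To finish, recall from Lemma \ref{conjugate}(ii) that $\rim^\times=\w\a^*\cup(-\w\a^*)$, so any $\a\in\rim\setminus\{0\}$ can be written $\a=\pm w\a^*$ for some $w\in\w$; applying the invariance just established gives $p_*(\a)=\pm p_*(w\a^*)=\pm\a^*$, that is $p_\a=\pm1$. The only real obstacle is the middle step, namely verifying that $\w$ fixes the $\a^*$-direction modulo $\v_{re}$ so that $p_*$ descends to a $\w$-invariant map; once that is in place, the rest is bookkeeping together with the conjugacy statement of Lemma \ref{conjugate}(ii).
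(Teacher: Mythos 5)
Your proof is correct, and it takes a noticeably different (and shorter) route than the paper's. You establish that $\w$ acts trivially on $\v/\v_{re}$, so that $p_*$ is $\w$-invariant, and then invoke Proposition \ref{conjugate}($ii$) to write every nonsingular root as $\pm w\a^*$; this immediately gives $p_\a=\pm1$. The paper instead works directly with the equivalence relation $\a\sim\b\Leftrightarrow p_\a=\pm p_\b$ on $\rim^\times$: it shows that distinct equivalence classes are mutually orthogonal (if $(\a,\a')\neq0$ then some $\a+k\a'$ lies in $\rre$, forcing $p_\a=-kp_{\a'}$) and $\w$-invariant (via the same observation $p_*(r_\gamma\d)=p_*(\d)$ that you use), and then appeals to Lemma \ref{general} to conclude there is only one class. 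Since Proposition \ref{conjugate}($ii$) is itself proved from Lemma \ref{general}, both arguments ultimately rest on the same irreducibility input, but yours reuses the already-available conjugacy statement rather than re-running the partition argument; the paper's version has the minor advantage of not needing the full strength of $\rim^\times=\w\a^*\cup-\w\a^*$, only the non-decomposability of $\rim^\times$. There is no circularity in your appeal to Proposition \ref{conjugate}($ii$): the paper itself uses it just before the lemma to obtain the decomposition $\v=\bbbf\a^*\op\v_{re}$ that defines $p_*$.
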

\pf The statement is immediate for $\a\in\rre.$ To prove the statement  for nonsingular roots, we  show  that for each two elements $\a,\b$ of $\rim^\times,$ $p_\a=\pm p_\b.$ Suppose that $\a,\b\in \rim^\times,$ we say $\a\sim\b$  if $p_\a=\pm p_\b.$ This defines an equivalence relation on $\rim^\times.$ So $\rim^\times$ is the disjoint union of equivalence classes, say $\rim^\times=\uplus_{t\in T}\rim^{t}.$  Suppose that  $t,t'\in T$ with $t\neq t',$  $\a\in\rim^t$ and $\a'\in\rim^{t'}.$ If $(\a,\a')\neq0,$ then since $R$ is a locally finite root supersystem, there is $k\in\{\pm1\}$ such that $\a+k\a'\in R.$ Since $\a,\a'\in\rim$ and $(\a,\a')\neq 0,$ we get that $\a+k\a'\in\rre$ and so $p_*(\a+k\a')=0.$ This in turn implies that $p_*(\a)=-kp_*(\a');$ in particular $\a\sim\a'$ which is  a contradiction. Therefore $(\rim^t,\rim^{t'})=\{0\}.$ Also for $t\in T,$ $\d\in \rim^t$ and  $\a\in\rre^\times,$ we have $r_\a\d=\d-2\frac{(\a,\d)}{(\a,\a)}\a,$ so $p_*(\d)=p_*(r_\a(\d)).$ Therefore  $r_\a(\d)\in\rim^t$  and so $\rim^t$ is invariant under the  Weyl group. Now using Lemma \ref{general}, we get that for  all $\a,\b\in\rim^\times,$ $\a\sim \b;$ in particular, $\a\sim\a^*$ for all $\a\in\rim^\times.$ So $p_\a=\pm p_{\a^*}=\pm1.$\qed

\begin{lem}\label{w-alpha-star} For $i\in I,$ take $\w_i$ to be the subgroup of $\w$ generated by $\{r_\a\mid \a\in \rre^i\setminus\{0\}\}.$ We have the following statements:
\\
(i) Suppose that $i\in I$ and  $w\in\w_i,$ then  we have $w\a^*-\a^*\in \rre^i.$
\\
(ii)
Suppose that $w\in\w,$ then there are distinct elements   $i_1,\ldots,i_t$ of $ I$ and $w_{1}\in \w_{i_1},\ldots,w_t\in\w_{i_t}$  such that  $w\a^*=w_1\ldots w_t\a^*.$  Moreover, $w\a^*=\a^*+(w_1\a^*-\a^*)+\cdots+(w_t\a^*-\a^*).$
\end{lem}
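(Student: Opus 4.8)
The plan is to prove part (i) first and then deduce part (ii) from it together with the commutativity of the subgroups $\w_i$. Throughout I use that the decomposition $\rre=\op_{i\in I}\rre^i$ is orthogonal, so $(\v_{re}^i,\v_{re}^j)=\{0\}$ for $i\neq j$ and $\rre\cap\v_{re}^i=\rre^i$, and that $\w$ preserves both $R$ and the form by Lemma \ref{real roots}(a).

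For part (i), fix $i\in I$ and $w\in\w_i$. First I would record two structural facts. Since $w$ preserves the form and $(\a^*,\a^*)=0$, the image $w\a^*$ is again a nonsingular root. Next, because $w$ is a product of reflections $r_\a$ with $\a\in\rre^i\subseteq\v_{re}^i$, and each such $r_\a$ fixes $\v_{re}^j$ pointwise for $j\neq i$ while sending $\a^*$ into $\a^*+\v_{re}^i$, an easy induction on the length of $w$ shows $\beta:=w\a^*-\a^*\in\v_{re}^i$; the same induction, using $\check\a(\a^*)=2(\a^*,\a)/(\a,\a)\in\bbbz$ (Lemma \ref{serg2}) and the fact that reflections preserve $\hbox{span}_\bbbz\rre^i$, refines this to $\beta\in\hbox{span}_\bbbz\rre^i\subseteq\hbox{span}_\bbbq\rre^i$.

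The crux is then to show $\beta\in\rre^i$, which I would do by a dichotomy on $(w\a^*,\a^*)$. If $(w\a^*,\a^*)\neq0$, then $w\a^*$ and $\a^*$ are nonsingular roots that are not orthogonal, so the nonsingular-root axiom applied to the nonsingular root $\a^*$ and the root $w\a^*$ gives $\{w\a^*-\a^*,\,w\a^*+\a^*\}\cap R\neq\emptyset$. But $w\a^*+\a^*=2\a^*+\beta$ is nonzero (as $\beta\in\v_{re}$ cannot equal $-2\a^*$) and has $p_*$-component $2\a^*$, which is forbidden for a root by Lemma \ref{alphastar}, since every root has $p$-coefficient in $\{0,\pm1\}$; hence $\beta=w\a^*-\a^*\in R$, and since $(\beta,\beta)=-2(w\a^*,\a^*)\neq0$ and $\beta\in\v_{re}^i$, we conclude $\beta\in\rre\cap\v_{re}^i=\rre^i$. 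If instead $(w\a^*,\a^*)=0$, then $(\beta,\beta)=-2(w\a^*,\a^*)=0$; since $\beta\in\hbox{span}_\bbbq\rre^i$ and $\rre^i$ is irreducible, the form is a nonzero scalar multiple of a positive definite form there by Lemma \ref{inv-form}, forcing $\beta=0\in\rre^i$. This case split, and in particular the use of the $p_*$-obstruction to rule out $w\a^*+\a^*$, is the step I expect to carry the real weight.

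For part (ii) I would first observe that the subgroups $\{\w_i\}_{i\in I}$ pairwise commute: for $\a\in\rre^i$ and $\b\in\rre^j$ with $i\neq j$ one has $(\a,\b)=0$, so $r_\a$ and $r_\b$ act on mutually orthogonal subspaces and have symmetric-in-$\a,\b$ effect on $\a^*$, whence $r_\a r_\b=r_\b r_\a$. Since $\w=\langle r_\a\mid\a\in\rre^\times\rangle=\langle\w_i\mid i\in I\rangle$ and any $w\in\w$ is a finite product of reflections, I can sort these generators by component, each transposition being a swap of commuting reflections from distinct components, to write $w=w_1\cdots w_t$ with distinct $i_1,\dots,i_t\in I$ and $w_k\in\w_{i_k}$. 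Finally, by part (i), $w_k\a^*-\a^*\in\rre^{i_k}\subseteq\v_{re}^{i_k}$, and since each $w_k\in\w_{i_k}$ fixes $\v_{re}^{i_l}$ pointwise for $l\neq k$, applying the factors to $\a^*$ one at a time telescopes to $w\a^*=\a^*+\sum_{k=1}^t(w_k\a^*-\a^*)$, which is exactly the asserted formula.
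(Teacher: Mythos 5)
Your proposal is correct and follows essentially the same route as the paper: an induction on the length of $w$ to place $w\a^*-\a^*$ in $\hbox{span}_\bbbq\rre^i,$ then Lemma \ref{inv-form} to split into the cases $w\a^*-\a^*=0$ and $(w\a^*-\a^*,w\a^*-\a^*)\neq0,$ with the nonsingular-root axiom plus the $p_*$-obstruction from Lemma \ref{alphastar} ruling out $w\a^*+\a^*$ and forcing $w\a^*-\a^*\in R\cap\v_{re}^i=\rre^i.$ Part (ii) likewise matches the paper's argument via commutativity of reflections in orthogonal components and the telescoping identity obtained from part (i).
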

\pf  $(i)$ Suppose that  $w:=r_{\a_n}\cdots r_{\a_1},$ for some $\a_1,\ldots,\a_n\in \rre^i\setminus\{0\}.$ We first  use induction on $n$ to prove that $w\a^*-\a^*\in \hbox{span}_\bbbq\rre^i.$ If $n=1,$ we have $w\a^*=r_{\a_1}\a^*=\a^*-2\frac{(\a_1,\a^*)}{(\a_1,\a_1)}\a_1$ and so we are done as $2\frac{(\a_1,\a^*)}{(\a_1,\a_1)}\in\bbbz.$
Now suppose $n>1$  and that the result  holds for $n-1.$ Set $w_1:=r_{\a_{n-1}}\cdots r_{\a_1},$ then by the induction hypothesis, we get that  $w_1\a^*-\a^*\in \hbox{span}_\bbbq\rre^i.$ So
\begin{eqnarray}
w\a^*=r_{\a_n}w_1\a^*\in r_{\a_n}(\a^*+\hbox{span}_\bbbq\rre^i)&=&r_{\a_n}(\a^*)+r_{\a_n}(\hbox{span}_\bbbq\rre^i)\nonumber\\&\sub& \a^*-2\frac{(\a_n,\a^*)}{(\a_n,\a_n)}\a_n+\hbox{span}_\bbbq\rre^i\label{final2}\\&\sub & \a^*+\hbox{span}_\bbbq\rre^i\nonumber
\end{eqnarray}
and so $w\a^*-\a^*\in \hbox{span}_\bbbq\rre^i.$ Now by Lemmas \ref{inv-form} and \ref{real roots}($a$), since $w\a^*-\a^*\in\hbox{span}_\bbbq\rre^i,$ either $w\a^*=\a^*$ or $(w\a^*-\a^*,w\a^*-\a^*)\neq0.$ If  $(w\a^*-\a^*,w\a^*-\a^*)\neq0,$ then $(w\a^*,\a^*)\neq 0$ and so by the definition of  a locally finite root supersystem, $ \{w\a^*\pm \a^*\}\cap R\neq\emptyset.$ But by (\ref{final2}), we have $p_*(w\a^*)=p_*(\a^*)$ which together with    Lemma   \ref{alphastar} implies that  $w\a^*+\a^*\not\in R.$ Therefore  $w\a^*-\a^*\in R$ and so $w\a^*-\a^*\in R_{re}\cap\v^i_{re}=\rre^i.$ This completes the proof.

($ii$) The first assertion follows from the fact that for $\a,\b\in \rre^\times$ with $(\a,\b)= 0,$ we have $r_\a r_\b=r_\b r_\a.$
 Now suppose that          $i_1,\ldots,i_t$  are distinct elements of $ I$ and $w_{1}\in \w_{i_1},\ldots,w_t\in\w_{i_t}$  such that  $w\a^*=w_1\ldots w_t\a^*.$ We know from part ($i$) that for $1\leq j\leq t,$ $w_j\a^*-\a^*\in \rre^{i_j},$ so for $1\leq j\neq r\leq t,$ we have $w_r(w_j\a^*-\a^*)=w_j\a^*-\a^*.$ Now using  induction on $t,$ one  gets that $$w\a^*=\a^*+(w_1\a^*-\a^*)+\cdots+(w_t\a^*-\a^*).$$ This completes the proof.\qed

\medskip

Consider the decomposition $\v=\bbbf\a^*\op\sum_{i\in I}\v_{re}^i$ and assume for $i\in I,$ $p_i$ is the projection map of $\v$ on $\v_{re}^i$ with respect to this decomposition. For $\a\in R,$ define $$\hbox{supp}(\a):=\{i\in I\mid p_i(\a)\neq 0\}.$$ Using Lemma \ref{w-alpha-star}, one can easily verify the following corollary.
\begin{cor}
 \label{note} For $\a\in \rre^\times,$ $|\supp(\a)|=1.$ Also for $\d\in\rim$ and  $i\in I,$ $p_i(\d)\in \rre^i.$
\end{cor}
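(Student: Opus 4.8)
The corollary is a bookkeeping consequence of the structural results already established, so the plan is short. For the first assertion I would argue straight from the decomposition $\rre=\op_{i\in I}\rre^i$ into irreducible components: since these are precisely the connected components of $\rre^\times$, every nonzero real root $\a$ lies in exactly one $\rre^i$, hence in the single summand $\v_{re}^i$ of $\v=\bbbf\a^*\op\sum_{i\in I}\v_{re}^i$ (consistent with $p_*(\a)=0$ from Lemma \ref{alphastar}). Because the subspaces $\v_{re}^i$ are mutually orthogonal and in direct sum, the projections then read off as $p_i(\a)=\a\neq0$ and $p_j(\a)=0$ for all $j\neq i$. Thus $\supp(\a)=\{i\}$ and $|\supp(\a)|=1$.

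For the second assertion the case $\d=0$ is trivial, since $0\in\rre^i$, so I take $\d\in\rim^\times$. The key input is Lemma \ref{conjugate}(ii), which lets me write $\d=\pm w\a^*$ for some $w\in\w$; it then suffices to understand the projections of $w\a^*$. Here I would apply Lemma \ref{w-alpha-star}(ii), which furnishes distinct indices $i_1,\dots,i_t\in I$ and elements $w_j\in\w_{i_j}$ with
$$w\a^*=\a^*+(w_1\a^*-\a^*)+\cdots+(w_t\a^*-\a^*),\qquad w_j\a^*-\a^*\in\rre^{i_j}.$$
Reading this expansion against the decomposition $\v=\bbbf\a^*\op\sum_{i\in I}\v_{re}^i$ gives $p_i(w\a^*)=w_j\a^*-\a^*\in\rre^{i_j}$ when $i=i_j$, and $p_i(w\a^*)=0\in\rre^i$ otherwise. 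For the case $\d=-w\a^*$ I would invoke that each $\rre^i$, being a root system, is symmetric, so $p_i(\d)=-(w_j\a^*-\a^*)\in\rre^{i_j}$ as well. In every case $p_i(\d)\in\rre^i$.

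There is no genuine obstacle here: the substantial work was done in Lemma \ref{w-alpha-star}, whose explicit formula for $w\a^*$ is exactly what separates the $\bbbf\a^*$-component from the finitely many nonzero $\v_{re}^i$-components. The only points that demand a little care are remembering that $0\in\rre^i$, so that a vanishing projection still counts as lying in $\rre^i$, and handling the sign in $\d=\pm w\a^*$ via the symmetry $-\rre^i=\rre^i$.
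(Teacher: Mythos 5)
Your proof is correct and follows exactly the route the paper intends: the paper gives no written proof, merely stating that the corollary follows from Lemma \ref{w-alpha-star}, and your argument (one component per real root for the first claim; Lemma \ref{conjugate}(ii) plus the expansion $w\a^*=\a^*+\sum_j(w_j\a^*-\a^*)$ with $w_j\a^*-\a^*\in\rre^{i_j}$ for the second, together with $-\rre^i=\rre^i$) is precisely the intended verification.
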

\begin{lem}
\label{supp}
Suppose that $\d,\gamma\in\rim\setminus\{0\},$ then there is $\eta\in\rim$ such that $\supp(\d)\cup\supp(\gamma)\sub\supp(\eta).$
\end{lem}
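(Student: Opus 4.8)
The plan is to realise each nonsingular root as a Weyl translate of $\a^*$ and to read off its support directly from the component-wise decomposition of Weyl group elements furnished by Lemma \ref{w-alpha-star}. I begin with two preliminary remarks. First, $\supp$ is insensitive to sign, because $p_i(-\beta)=-p_i(\beta)$ for every $\beta\in\v$. Second, $p_m(\a^*)=0$ for all $m\in I$: since $p_*(\a^*)=\a^*$, the decomposition $\a^*=p_*(\a^*)+\sum_{m\in I}p_m(\a^*)$ forces $\sum_{m}p_m(\a^*)=0$, whence each $p_m(\a^*)=0$ by directness of the sum $\v=\bbbf\a^*\op\sum_{i\in I}\v_{re}^i$.

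Now, since $R$ is irreducible and $\d,\gamma\in\rim^\times$, Proposition \ref{conjugate}(ii) gives $\rim^\times=\w\a^*\cup-\w\a^*$; as supports are sign-insensitive, I may assume $\d=u\a^*$ and $\gamma=v\a^*$ for some $u,v\in\w$. Applying Lemma \ref{w-alpha-star}(ii), write $u=u_1\cdots u_s$ with $u_k\in\w_{i_k}$ for distinct $i_1,\ldots,i_s\in I$, so that $u\a^*=\a^*+\sum_k(u_k\a^*-\a^*)$ with $u_k\a^*-\a^*\in\rre^{i_k}\sub\v_{re}^{i_k}$. Combined with $p_m(\a^*)=0$, this yields $p_{i_k}(\d)=u_k\a^*-\a^*$ and $p_m(\d)=0$ for $m\notin\{i_1,\ldots,i_s\}$; hence $\supp(\d)=\{i_k\mid u_k\a^*\neq\a^*\}$, a finite set, and the analogous statement holds for $\gamma$. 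In particular, for each $i\in\supp(\d)\cup\supp(\gamma)$ there is an element $x_i\in\w_i$ (one of the factors $u_k$ of $u$, or of $v$) with $x_i\a^*\neq\a^*$.

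To finish, list $\supp(\d)\cup\supp(\gamma)=\{m_1,\ldots,m_n\}$ with the $m_p$ distinct, and put $w:=x_{m_1}\cdots x_{m_n}\in\w$ and $\eta:=w\a^*$. Because $\w$ stabilises both $R$ and the form, $\eta\in\rim^\times$. As $w$ is already presented as a product over distinct components with factors in the respective $\w_{m_p}$, Lemma \ref{w-alpha-star}(ii) gives $\eta=\a^*+\sum_{p}(x_{m_p}\a^*-\a^*)$; the support computation of the second paragraph then shows $\supp(\eta)=\{m_p\mid x_{m_p}\a^*\neq\a^*\}=\{m_1,\ldots,m_n\}=\supp(\d)\cup\supp(\gamma)$, which is even stronger than the asserted inclusion.

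No step is genuinely hard once Proposition \ref{conjugate}(ii) and Lemma \ref{w-alpha-star} are in hand; the conceptual point is simply that the support of $w\a^*$ records exactly which irreducible components are moved by $w$, so one can build $\eta$ with a prescribed support by multiplying one nontrivial factor $x_i\in\w_i$ per component. The only thing to check with care is that distinct-component factors commute, so that $w=x_{m_1}\cdots x_{m_n}$ legitimately has the decomposed form required by Lemma \ref{w-alpha-star}(ii); this follows at once from $(\rre^i,\rre^j)=\{0\}$ for $i\neq j$, which makes the generating reflections of $\w_i$ and $\w_j$ commute.
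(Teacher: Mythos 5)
Your proof is correct and follows essentially the same route as the paper: both arguments use Proposition \ref{conjugate}(ii) to write $\d$ and $\gamma$ as $\pm w\a^*$, invoke Lemma \ref{w-alpha-star} to decompose these into one factor per irreducible component of $\rre$ and read off the supports, and then define $\eta$ by applying to $\a^*$ the product of one nontrivial factor for each index in $\supp(\d)\cup\supp(\gamma)$. The only (cosmetic) difference is that the paper separates out the indices in $\supp(\d)\cap\supp(\gamma)$ explicitly, whereas your uniform choice of the $x_i$ makes that case distinction unnecessary.
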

\pf  If $\supp(\d)\sub\supp(\gamma)$ or $\supp(\gamma)\sub\supp(\d),$ then there is nothing to prove, so we assume $\supp(\d)\not\sub\supp(\gamma)$  as well as $\supp(\gamma)\not\sub\supp(\d).$ Suppose that $\supp(\d)\cap\supp(\gamma)=\{i_1,\ldots,i_t\}$  and note that this can be the empty set. We make a convention that in what follows we remove the expressions involving  $i_1,\ldots,i_t,$  if $\supp(\d)\cap\supp(\gamma)=\emptyset.$ Assume that $\supp(\d)=\{i_1,\ldots,i_t,j_1,\ldots,j_n\}$ and $\supp(\gamma)=\{i_1,\ldots,i_t,k_1,\ldots,k_m\}.$ So by Lemma \ref{w-alpha-star} and Proposition \ref{conjugate}($ii$), there are $w_1,w'_1\in \w_{i_1},\ldots,w_t,w'_t\in \w_{i_t},$  $u_1\in\w_{j_1},\ldots,u_n\in\w_{j_n},$ $v_1\in \w_{k_1},\ldots,v_m\in\w_{k_m}$  and $r,s\in\{\pm 1\}$ such that
$$\begin{array}{l}
u_1\a^*-\a^*\neq0,\ldots,u_{n}\a^*-\a^*\neq 0,\\
v_1\a^*-\a^*\neq0,\ldots,v_{m}\a^*-\a^*\neq 0
\end{array}$$
and {\small $$\begin{array}{l}r\d=\a^*+(w_1\a^*-\a^*)+\cdots+(w_{t}\a^*-\a^*)+(u_1\a^*-\a^*)+\cdots+(u_{n}\a^*-\a^*),\\
s\gamma=\a^*+(w'_1\a^*-\a^*)+\cdots+(w'_{t}\a^*-\a^*)+(v_1\a^*-\a^*)+\cdots+(v_{m}\a^*-\a^*).\end{array}$$}
Now set $\eta:=w_1\cdots w_t u_1\cdots u_n v_1\cdots v_m(\a^*),$ then we have
{\small \begin{eqnarray*}
\eta&=&\a^*+(w_1\a^*-\a^*)+\cdots+(w_{t}\a^*-\a^*)+(u_1\a^*-\a^*)+\cdots+(u_{n}\a^*-\a^*)\\
&+&(v_1\a^*-\a^*)+\cdots+(v_{m}\a^*-\a^*).
\end{eqnarray*}}
This completes the proof.\qed

\begin{lem}
\label{not-decom}
(i) Suppose that $\a\in \rre^\times$ and $r\in\{\pm1\}$ are such that $\a+r\a^*\in R,$ then $\a+r\a^*\in\rim,$ in particular $2(\a,\a^*)/(\a,\a)=-r.$

(ii) For each $i\in I,$ there is $\d\in\rim$ such that $i\in \supp(\d).$

(iii) If $i\in I$ and $w\in\w,$ then $(w\a^*,\rre^i)\neq \{0\}.$
\end{lem}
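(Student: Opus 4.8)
The plan is to dispatch the three parts in turn, in each case leaning on the fixed nonsingular root $\a^*$, the projection $p_*$ onto $\bbbf\a^*$, and the fact (Lemma \ref{real roots}($a$)) that reflections, and hence the Weyl group, preserve the form.

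For (i) I would set $\gamma:=\a+r\a^*$ and first observe that $\gamma\neq 0$, since $p_*(\gamma)=r\a^*\neq 0$, and that $p_\gamma=r=\pm1$. By Lemma \ref{alphastar} a nonzero value of $p_\gamma$ rules out $\gamma\in\rre$, so $\gamma\in\rim$ as $\gamma\in R^\times$. Expanding $0=(\gamma,\gamma)=(\a,\a)+2r(\a,\a^*)$ (using $(\a^*,\a^*)=0$ and $r^2=1$) and solving then yields $2(\a,\a^*)/(\a,\a)=-1/r=-r$, which is exactly the asserted identity.

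For (ii) the key point is that $\a^*$ itself has empty support, so I cannot read off the $\v_{re}^i$-component of a nonsingular root from $\a^*$ directly; instead I would reflect. Applying Lemma \ref{tame} to $\a^*$ produces $\a\in\rre^i\setminus\{0\}$ with $(\a^*,\a)\neq0$. Since $\a$ is a real root, $\d:=r_\a(\a^*)=\a^*-\frac{2(\a^*,\a)}{(\a,\a)}\a$ lies in $R$, and $(\d,\d)=(\a^*,\a^*)=0$ by Lemma \ref{real roots}($a$), so $\d\in\rim$. Because $p_i(\a^*)=0$ in the decomposition $\v=\bbbf\a^*\op\sum_{i\in I}\v_{re}^i$, while $p_i(\a)=\a$, the $\v_{re}^i$-component of $\d$ is $-\frac{2(\a^*,\a)}{(\a,\a)}\a\neq0$, whence $i\in\supp(\d)$.

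For (iii) I would simply note that $R$ is $\w$-invariant and $\a^*\in R$, so $w\a^*\in R$; it is nonzero since $w$ is invertible, and $(w\a^*,w\a^*)=(\a^*,\a^*)=0$ by Lemma \ref{real roots}($a$), so $w\a^*\in\rim^\times$. Lemma \ref{tame} applied to $w\a^*$ then gives $(w\a^*,\rre^i)\neq\{0\}$ at once. The only genuine subtlety in the whole lemma is the one flagged in (ii): the decomposition $\v=\bbbf\a^*\op\v_{re}$ is \emph{not} orthogonal, so producing a nonsingular root with nonzero $i$-projection must proceed through a reflection of $\a^*$ by a suitable root of $\rre^i$ rather than through $\a^*$ itself. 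Parts (i) and (iii) are then immediate once $\rim$-membership has been secured via the form-invariance of the reflections.
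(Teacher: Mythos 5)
Your proof is correct. Parts (i) and (iii) are essentially the paper's arguments: for (i) the paper also derives $\gamma\in\rim$ from the directness of $\v=\bbbf\a^*\op\v_{re}$ (your appeal to Lemma \ref{alphastar} and $p_\gamma=r\neq0$ is the same fact repackaged), and for (iii) the paper likewise combines Lemma \ref{tame} with $\w$-invariance, though it keeps the invariance on the form side via $(w\a^*,\rre^i)=(\a^*,w^{-1}\rre^i)$ rather than first checking $w\a^*\in\rim^\times$ as you do; both are one-line arguments. The only genuine divergence is in (ii): the paper takes $\a\in\rre^i$ with $(\a,\a^*)\neq0$, invokes the fifth axiom to get $r\in\{\pm1\}$ with $\a+r\a^*\in R$, and then uses part (i) to conclude this root is nonsingular with $i$ in its support; you instead take $\d:=r_\a(\a^*)$, which lies in $R$ by the fourth axiom and is isotropic by Lemma \ref{real roots}($a$), with $p_i(\d)=-\frac{2(\a^*,\a)}{(\a,\a)}\a\neq0$ because $\frac{2(\a^*,\a)}{(\a,\a)}$ is a nonzero integer. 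Your route makes (ii) independent of (i) and avoids the case split over $r$, at the small cost of invoking form-invariance of the Weyl group; the paper's route reuses (i) and stays closer to the nonsingular-root axiom. Both are complete, and your closing remark that the non-orthogonality of $\v=\bbbf\a^*\op\v_{re}$ forces one to pass through a reflection (or a sum $\a+r\a^*$) rather than reading the support off $\a^*$ itself is exactly the right point to flag.
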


\pf $(i)$ If $\gamma:=\a+r\a^*\in\rre,$ then $0=r\a^*+\a-\gamma\in\bbbf\a^*\op\v_{re}$ which is a contradiction. Therefore, $\gamma\in\rim,$ and so   we have  $0=(\gamma,\gamma)=(\a+r\a^*,\a+r\a^*)=(\a,\a)+2r(\a,\a^*),$ This in turn implies that  $2(\a,\a^*)/(\a,\a)=-r.$

$(ii)$ Let $i\in I,$ we know from Lemma \ref{tame} that  $(\rre^i,\a^*)\neq\{0\},$ so one finds $\a\in \rre^i$ such that $(\a,\a^*)\neq 0.$ Therefore there is $r\in\{\pm 1\}$ such that $\a+r\a^*\in R.$ Using part $(i),$ we get that $\d:=\a+r\a^*\in\rim$ and $i\in \supp(\d).$

$(iii)$ It is immediate as by Lemma \ref{tame}, $(\rre^i,\a^*)\neq\{0\}$ and the form is $\w$-invariant.
\qed

\begin{Pro}
\label{cardinality} With the same notation as above,
$|I|\leq 2.$
\end{Pro}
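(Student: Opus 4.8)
The plan is to assume $|I|\geq 3$ and derive a contradiction by manufacturing, for each real component, a distinguished nonsingular root supported only on that component, and then examining the pairwise form values of these roots.

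First I would record the input data. For each $i\in I$, the proof of Lemma \ref{not-decom}(ii) produces a nonsingular root of the form $\a+r\a^*$ with $\a\in\rre^i$ and $r\in\{\pm1\}$; since $R=-R$, replacing it by its negative when $r=-1$, I may fix for every $i$ a root
$$\d_i:=\a_i+\a^*\in\rim,\qquad \a_i\in\rre^i .$$
By construction $p_*(\d_i)=\a^*$, $p_i(\d_i)=\a_i\neq0$ and $p_l(\d_i)=0$ for $l\neq i$, so $\supp(\d_i)=\{i\}$. Applying Lemma \ref{not-decom}(i) to $\d_i$ gives the normalization $2(\a_i,\a^*)/(\a_i,\a_i)=-1$, i.e. $(\a_i,\a^*)=-\tfrac12(\a_i,\a_i)$.

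The crucial step, which I expect to be the main obstacle, is to show that $(\d_i,\d_j)=0$ whenever $i\neq j$. Here I would invoke the defining property of nonsingular roots (axiom (5)): if $(\d_i,\d_j)\neq0$, then $\{\d_j-\d_i,\d_j+\d_i\}\cap R\neq\emptyset$. But neither element can be a root. On one hand $\d_i+\d_j=\a_i+\a_j+2\a^*$ has $p$-coefficient equal to $2$, which is impossible by Lemma \ref{alphastar}, since every root has $p$-coefficient in $\{0,\pm1\}$. On the other hand $\d_i-\d_j=\a_i-\a_j$ has zero $p$-coefficient, so by Lemma \ref{alphastar} it could only be a real root; yet its support is $\{i,j\}$, contradicting Corollary \ref{note}, which forces real roots to have singleton support. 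Hence $\{\d_j-\d_i,\d_j+\d_i\}\cap R=\emptyset$, and therefore $(\d_i,\d_j)=0$. The heart of the matter is precisely this simultaneous exclusion of $\d_i+\d_j$ (via the projection bound) and $\d_i-\d_j$ (via the singleton-support property); everything else is bookkeeping.

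Finally I would convert orthogonality into a numerical identity. Since $\rre^i\perp\rre^j$ for $i\neq j$ and $(\a^*,\a^*)=0$, the normalization above yields
$$(\d_i,\d_j)=(\a_i,\a^*)+(\a^*,\a_j)=-\tfrac12\bigl[(\a_i,\a_i)+(\a_j,\a_j)\bigr],$$
so $(\d_i,\d_j)=0$ is equivalent to $(\a_i,\a_i)=-(\a_j,\a_j)$. If $|I|\geq3$, choosing three distinct indices $i,j,k$ gives $(\a_i,\a_i)=-(\a_j,\a_j)=(\a_k,\a_k)=-(\a_i,\a_i)$, whence $(\a_i,\a_i)=0$ because $\bbbf$ has characteristic zero. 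This contradicts $\a_i\in\rre^\times$. Therefore $|I|\leq2$, as claimed; note that equality is realized by type $\dot A(T,P)$, where the two components carry forms of opposite sign.
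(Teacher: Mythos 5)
Your proof is correct, and it takes a genuinely different route from the paper's. The paper also argues by contradiction from three distinct indices, but instead of your three singleton-support roots $\d_i=\a_i+\a^*$ it produces (via Lemma \ref{supp}) a single nonsingular root $\d$ whose support contains all three indices, applies two reflections to form $\a_{i,j}:=r_{\gamma_j}r_{\gamma_i}\d$, cites the argument of \cite[Proposition 2.6(1)]{serg} to find a pair $i,j$ with $(\d,\a_{i,j})\neq 0$, and then reaches a contradiction because $\a_{i,j}-\d$ would be a real root supported on two components. The two exclusion mechanisms at the heart of your orthogonality step --- the sum $\d_i+\d_j$ is killed because its $\a^*$-coefficient is $2$ (Lemma \ref{alphastar}), and the difference $\d_i-\d_j=\a_i-\a_j$ because a real root has singleton support (Corollary \ref{note}) --- are exactly the ones the paper uses at its final step, but your arrangement avoids Lemma \ref{supp}, the reflection computation, and the external appeal to Serganova, replacing them with the normalization $(\a_i,\a^*)=-\tfrac12(\a_i,\a_i)$ from Lemma \ref{not-decom}(i) and the elementary alternating-sign identity $(\a_i,\a_i)=-(\a_j,\a_j)$ among the three components. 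What you lose is nothing of substance; what you gain is a self-contained argument that also makes transparent why $|I|=2$ forces the two real components to carry forms of opposite sign, exactly as realized by type $\dot A(T,P)$.
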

\pf Suppose  to the contrary that $|I|>2.$ Fix distinct elements $i_1,i_2,i_3\in I.$ Using Lemma \ref{not-decom}($ii$), we pick $\d_j\in\rim$ $(1\leq j\leq 3)$ such that $i_j\in\supp(\d_j).$ We next use Lemma \ref{supp} to fix $\d\in\rim$ such that $i_1,i_2,i_3\in \supp(\d).$ For $1\leq j\leq 3,$ by Lemma \ref{w-alpha-star} and Proposition \ref{conjugate}($ii$), there is $w_j\in\w_{i_j}$  such that $w_j\a^*-\a^*=\pm p_{i_j}(\d)\neq 0.$ Now we can use Lemma \ref{not-decom}($iii$) to find $\gamma_j\in \rre^{i_j}$ such that $(\gamma_j,w_j\a^*)\neq 0.$
For distinct indices $i,j\in\{1,2,3\},$ we set $$\a_{i,j}:=r_{\gamma_j}r_{\gamma_i}\d=\d-\frac{2(\gamma_j,\d)}{(\gamma_j,\gamma_j)}\gamma_j-\frac{2(\gamma_i,\d)}{(\gamma_i,\gamma_i)}\gamma_i.$$
Using the same argument as in \cite[Propoisition 2.6(1)]{serg},
%
one can find $i,j\in\{1,2,3\}$ with $i\neq j$ such that $(\d,\a_{i,j})\neq 0.$ Therefore  there is $r_{i,j}\in\{\pm1\}$ such that $\b_{i,j}=\a_{i,j}+r_{i,j}\d\in R.$ If $r_{i,j}=1,$ then  $\b_{i,j}=\a_{i,j}+\d\in R.$ But $p_*(\b_{i,j})=p_*(2\d-\frac{2(\d,\gamma_i)}{(\gamma_i,\gamma_i)}\gamma_i-\frac{2(\d,\gamma_j)}{(\gamma_j,\gamma_j)}\gamma_j)=2p_*(\d)$ which is a contradiction using Lemma \ref{alphastar}. Therefore $r_{i,j}=-1$ and so $\b_{i,j}=\a_{i,j}-\d=-\frac{2(\d,\gamma_i)}{(\gamma_i,\gamma_i)}\gamma_i-\frac{2(\d,\gamma_j)}{(\gamma_j,\gamma_j)}\gamma_j\in R.$ Since  $(\a_{i,j}-\d,\a_{i,j}-\d)=-2(\d,\a_{i,j})\neq0,$ we have  $-\frac{2(\d,\gamma_i)}{(\gamma_i,\gamma_i)}\gamma_i-\frac{2(\d,\gamma_j)}{(\gamma_j,\gamma_j)}\gamma_j\in\rre^\times$ which is a contradiction using Corollary \ref{note}. This completes the proof.\qed
\begin{Pro} \label{main}Suppose that $i\in I.$ Then
\begin{eqnarray*}\aa&:=&\{\a\in\rre^i\setminus\{0\}\mid2(\a,\a^*)/(\a,\a)=1\}\\
&=&\{\a\in\rre^i\setminus\{0\}\mid(\a,\a^*)=(\a,\a)/2\}\end{eqnarray*} is a nonempty set. Moreover, for $\a,\b\in\aa,$ we have $\a-\b\in\rre^i.$
\end{Pro}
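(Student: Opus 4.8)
The plan is to settle three things in turn --- the displayed set equality, the nonemptiness of $\aa$, and the difference property --- of which only the last needs a genuine idea. The set equality is purely formal: every $\a\in\rre^i\setminus\{0\}$ has $(\a,\a)\neq0$, so the conditions $2(\a,\a^*)/(\a,\a)=1$ and $(\a,\a^*)=(\a,\a)/2$ are literally identical, and nothing is to prove there.

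For nonemptiness I would begin with Lemma~\ref{tame}, which produces a root $\b\in\rre^i$ with $(\b,\a^*)\neq0$. Since $\a^*\in\rim$ and $(\a^*,\b)\neq0$, the defining axiom for nonsingular roots forces $\b+r\a^*\in R$ for some $r\in\{\pm1\}$, and Lemma~\ref{not-decom}(i) (which applies because $\b\in\rre^\times$) then gives $2(\b,\a^*)/(\b,\b)=-r$. If $r=-1$ this reads $\b\in\aa$; if $r=1$ the same identity applied to $-\b$, together with $\rre^i=-\rre^i$, gives $-\b\in\aa$. In either case $\aa\neq\emptyset$.

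The core is the difference property. Given $\a,\b\in\aa$, I would reflect $\a^*$ and set $\d_1:=r_\a\a^*=\a^*-\a$ and $\d_2:=r_\b\a^*=\a^*-\b$; the second equalities hold precisely because $2(\a,\a^*)/(\a,\a)=2(\b,\a^*)/(\b,\b)=1$, and $\d_1,\d_2\in\rim$ since $\w$ preserves both $R$ and the form. The device is that $\a-\b=\d_2-\d_1$, so I would feed the pair $\d_1,\d_2$ into the nonsingular-root axiom. Assume first $(\d_1,\d_2)\neq0$: the axiom yields $\{\d_2-\d_1,\d_2+\d_1\}\cap R\neq\emptyset$, but $p_*(\d_2+\d_1)=2\a^*$, a $p_*$-coefficient that Lemma~\ref{alphastar} forbids for a root, so $\d_2+\d_1\notin R$ and hence $\d_2-\d_1=\a-\b\in R$. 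Since $p_*(\a-\b)=0$, Lemma~\ref{alphastar} shows that if this root is nonzero it is real; as moreover $\a-\b\in\v_{re}^i$, the orthogonality of the components gives $\a-\b\in\rre\cap\v_{re}^i=\rre^i$.

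The remaining case $(\d_1,\d_2)=0$ is where I expect the only real obstacle, because the axioms say nothing about an orthogonal pair of nonsingular roots; it must be settled by a direct computation. Expanding $(\a^*-\a,\a^*-\b)=0$ with $(\a^*,\a^*)=0$, $(\a,\a^*)=(\a,\a)/2$ and $(\b,\a^*)=(\b,\b)/2$ gives $(\a,\b)=\big((\a,\a)+(\b,\b)\big)/2$, whence $(\a-\b,\a-\b)=0$. Now $\a-\b\in\hbox{span}_\bbbq\rre^i$, and although the ambient form on $\v$ is indefinite, its restriction to the single irreducible component $\rre^i$ is anisotropic on the rational span by Lemma~\ref{inv-form} (nonzero rational vectors have nonzero norm); hence $\a-\b=0\in\rre^i$. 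This localization to one irreducible component, which converts an indefinite global form into a definite one, is the key point that makes the degenerate case go through.
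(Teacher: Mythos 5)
Your proposal is correct and follows essentially the same route as the paper: both arguments reflect $\a^*$ to $\a^*-\a$ and $\a^*-\b$, invoke the nonsingular-root axiom together with Lemma \ref{alphastar} to exclude the sum and retain the difference, and use the anisotropy of the form on $\hbox{span}_\bbbq\rre^i$ from Lemma \ref{inv-form} to dispose of the orthogonal case. The only cosmetic difference is that the paper phrases the pairing as $(r_\a r_\b\a^*,\a^*)$ rather than your $(r_\a\a^*,r_\b\a^*)$, which agree since $r_\a$ is an isometric involution.
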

\pf We know from Lemma \ref{tame} that $(\a^*,\rre^i)\neq \{0\}.$ Fix $\a\in \rre^i$ such that $(\a,\a^*)\neq 0.$ So there is $r\in\{\pm 1\}$ such that $\a+r\a^*\in R.$ By Lemma \ref{not-decom}($i$), $2(\a,\a^*)/(\a,\a)=-r.$ Therefore,  either $\a\in\aa$ or $-\a\in \aa.$
Now suppose   $\a,\b\in\aa,$ then   we have  \begin{equation}\label{subtract}r_\a\a^*=\a^*-2\frac{(\a,\a^*)}{(\a,\a)}\a=\a^*-\a\andd r_\b\a^*=\a^*-2\frac{(\b,\a^*)}{(\b,\b)}\b=\a^*-\b.\end{equation}
This implies that     \begin{equation}\label{conj}\a-\b=r_\b\a^*-r_\a\a^*=r_\a(r_\a r_\b\a^*-\a^*).\end{equation}
So \begin{equation}\label{equal1}\a=\b \hbox{ if and only if } r_\a r_\b\a^*=\a^*.\end{equation}On the other hand by (\ref{subtract}), we have
\begin{equation*}\label{compute}r_\a r_\b\a^*-\a^*=r_\a(\a^*-\b)-\a^*=\a^*-\a-r_\a\b-\a^*=-\a-r_\a\b\in \hbox{span}_\bbbq \rre^i.\end{equation*}
This together with (\ref{equal1}) and  Lemma \ref{inv-form} implies that \begin{equation}\label{nonzero}-2(r_\a r_\b\a^*,\a^*)=(r_\a r_\b\a^*-\a^*,r_\a r_\b\a^*-\a^*)\neq 0;\;\;\a,\b\in\aa;\;\a\neq \b.\end{equation}  Now if $\a,\b$ are two distinct elements of $\aa,$ (\ref{nonzero}) implies that either $r_\a r_\b\a^*+\a^*\in R$ or $r_\a r_\b\a^*-\a^*\in R;$ but by Lemma  \ref{alphastar}, one knows that $r_\a r_\b\a^*+\a^*\not\in R,$ so  $r_\a r_\b\a^*-\a^*\in R\cap \hbox{span}_\bbbq \rre^i=\rre^i.$  This together with  (\ref{conj}) completes the proof.
\qed

\begin{Pro}
\label{pre}Suppose that $i\in I.$ Consider Proposition \ref{main} and set $A:=\pm \aa=\{\a\in\rre^i\mid (\a,\a^*)\neq 0\}.$

(i)  If $\rre^i$ is a locally finite root system of type $\dot A_T$ for an index set $T$ with $|T|\geq 2,$ say $\rre^i=\{\ep_r-\ep_s\mid r,s\in T\},$ then there is $t_0\in T$ such that $A=\{\pm(\ep_{t_0}-\ep_t)\mid t\in T\setminus\{t_0\}\}.$  In particular, for $r,s\in T\setminus\{t_0\},$ $(\a^*,\ep_{t_0}-\ep_r)=(\a^*,\ep_{t_0}-\ep_s).$

(ii) If $\rre^i$ is a locally finite root system of type $C_T$ for an index set $T$ with $|T|\geq 2,$ say $\rre^i=\{\pm(\ep_r\pm\ep_s)\mid r,s\in T\},$ then there is $t_0\in T$ such that $A=\{\pm 2\ep_{t_0},\pm(\ep_{t_0}\pm\ep_t)\mid t\in T\setminus\{t_0\}\}.$ In particular, for $t\in T\setminus\{t_0\},$ $(\a^*,\ep_t)=0.$
\end{Pro}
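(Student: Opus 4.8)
The plan is to reduce everything to the values of the bilinear form of $\a^*$ against the roots of $\rre^i$, and then to exploit the single strong constraint provided by Proposition \ref{main}, namely that $\a-\b\in\rre^i$ for all $\a,\b\in\aa$. By Lemma \ref{inv-form} the form on $\v_{re}^i$ is a nonzero scalar multiple $c$ of the standard form realizing the type, so I may assume $(\ep_r,\ep_s)=c\,\d_{r,s}$ with $c\neq0$. In type $\dot A_T$ the vectors $\ep_t$ need not lie in $\v_{re}^i$, but the integers $n_{r,s}:=2(\ep_r-\ep_s,\a^*)/(\ep_r-\ep_s,\ep_r-\ep_s)$ satisfy the cocycle identity $n_{r,s}+n_{s,t}=n_{r,t}$, so after fixing a base point there is a function $y\colon T\to\bbbz$ with $n_{r,s}=y_r-y_s$; in type $C_T$ one has $\ep_t=\tfrac12(2\ep_t)\in\v_{re}^i$ and I set $y_t:=(\a^*,\ep_t)/c$ directly. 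In both cases Lemma \ref{serg2}(1) forces every relevant value of $2(\a,\a^*)/(\a,\a)$ into $\{0,\pm1,\pm2\}$, and $\aa$ is exactly the set of roots on which this value equals $1$; Proposition \ref{main} guarantees $\aa\neq\emptyset$.

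For part (i) the constraint $|y_r-y_s|\le2$ shows that the $y_t$ take at most three consecutive integer values, say among $\{m,m+1,m+2\}$. First I would rule out three occupied levels: if $u,v,w$ sit at levels $m+2,m+1,m$, then $\ep_u-\ep_v,\ \ep_v-\ep_w\in\aa$ while their difference $\ep_u-2\ep_v+\ep_w$ is not in $\rre^i$, contradicting Proposition \ref{main}. Since $\aa\neq\emptyset$ needs a value-$1$ (hence consecutive) difference, exactly two consecutive levels occur; writing $T=U\uplus V$ with $U$ the higher level, one gets $\aa=\{\ep_u-\ep_v\mid u\in U,\ v\in V\}$ and $A=\pm\aa$. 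A second application of Proposition \ref{main} forces $\min(|U|,|V|)=1$: two elements $\ep_u-\ep_v,\ \ep_{u'}-\ep_{v'}$ of $\aa$ with $u\neq u'$ and $v\neq v'$ would have a difference involving four distinct indices, which is not a root. Taking $t_0$ to be the singleton side yields $A=\{\pm(\ep_{t_0}-\ep_t)\mid t\in T\setminus\{t_0\}\}$, and the ``in particular'' statement follows because all $t\neq t_0$ lie at the same level.

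For part (ii) the two short-root constraints give $|y_r+y_s|\le2$ and $|y_r-y_s|\le2$, hence $|y_r|+|y_s|\le2$ for $r\neq s$. I would first exclude $|y_t|=2$: such a $t$ forces every other $y$ to vanish, and then the only values of $2(\a,\a^*)/(\a,\a)$ attained are $0,\pm2$, so $\aa=\emptyset$, impossible. Thus every $y_t\in\{0,\pm1\}$; write $T_{+},T_{-}$ for the indices with $y_t=1,-1$. Using that both $2\ep_a$ (for $a\in T_+$) and $-2\ep_b$ (for $b\in T_-$) lie in $\aa$, Proposition \ref{main} kills each bad configuration by a length computation: $2\ep_a-2\ep_{a'}$, $-2\ep_b+2\ep_{b'}$, and $2\ep_a+2\ep_b$ all have squared length $8c\notin\{2c,4c\}$ and so are non-roots. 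This gives $|T_+|\le1$, $|T_-|\le1$, and $T_+,T_-$ not both nonempty; with $\aa\neq\emptyset$ forcing at least one nonzero value, exactly one index $t_0$ has $y_{t_0}\neq0$. Reading off which roots pair nontrivially with $\a^*$ then gives $A=\{\pm2\ep_{t_0},\pm(\ep_{t_0}\pm\ep_t)\mid t\in T\setminus\{t_0\}\}$ and $(\a^*,\ep_t)=0$ for $t\neq t_0$.

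The main obstacle I anticipate is organizing the combinatorial bookkeeping of the ``difference is a root'' condition: the naive guess that neighbouring values differ by $1$ is false (they may differ by $2$), so in type $\dot A_T$ one genuinely needs two separate applications of Proposition \ref{main} --- one to collapse three levels to two consecutive ones, and one to collapse the resulting complete bipartite pattern to a star --- while in type $C_T$ the crucial and easily missed point is that a negative long root $-2\ep_b$ re-enters $\aa$, which is exactly what makes the sign-separation of $T_+$ and $T_-$ go through.
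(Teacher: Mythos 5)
Your proof is correct, and it reaches the conclusion by a somewhat different route than the paper. For part (i) the underlying mechanism is the same — additivity of $(\a^*,\cdot)$ on type $\dot A_T$ plus the fact from Proposition \ref{main} that $\aa\neq\emptyset$ and differences of elements of $\aa$ are roots — but you organize it through an integer-valued level function $y$ on $T$, whereas the paper argues directly that $\ep_r-\ep_s\in A$ forces, for every $t$, one of $\ep_r-\ep_t,\ep_s-\ep_t$ into $A$, and then invokes Proposition \ref{main} (rather tersely) to collapse this to a star; your two-stage reduction (three levels $\to$ two consecutive levels $\to$ a singleton side) makes the terse step of the paper explicit. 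The more substantive divergence is in part (ii): to show at most one long root (up to sign) meets $A$, the paper forms the nonsingular roots $2\ep_{t_0}+k\a^*$ and $2\ep_t+k'\a^*$, computes via Lemma \ref{not-decom}($i$) that they are non-orthogonal, and derives a contradiction from axiom (5) together with Lemma \ref{alphastar} and the projection $p_*$; you instead stay entirely inside $\rre^i$ and use Proposition \ref{main} with the observation that $\pm2\ep_a\mp2\ep_b$ has squared length $8c$ and hence is not a root. Your route is more uniform (both parts run on the single engine of Proposition \ref{main}) and avoids re-entering the nonsingular part of $R$; the paper's route for (ii) is shorter at the cost of importing Lemmas \ref{alphastar} and \ref{not-decom}. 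One small point worth making explicit in your write-up is that the identity $A=\pm\aa=\{\a\in\rre^i\mid(\a,\a^*)\neq0\}$, which you use when "reading off" $A$ at the end, rests on Lemma \ref{not-decom}($i$) (the nonzero values of $2(\a,\a^*)/(\a,\a)$ are exactly $\pm1$, never $\pm2$); this is stated in the proposition's setup but is itself a consequence of that lemma.
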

\pf $(i)$ We know from Proposition \ref{main} that  $A\neq \emptyset.$ So there is nothing to prove if $\rre^i$ is of rank 1. Next suppose   $\rre^i$ is of rank greater than 1. If  there are distinct elements $r,s,t\in T$ with  $\ep_r-\ep_s\in A$ and  $(\a^*,\ep_r-\ep_t)=(\a^*,\ep_s-\ep_t)=0,$ then we have  $(\a^*,\ep_r-\ep_s)=0$ which is a contradiction. Therefore, on concludes that
  if $r,s\in T$ with $\ep_r-\ep_s\in A,$ then
for $t\in T,$ either $\ep_r-\ep_t\in A,$ or $\ep_s-\ep_t\in A.$
This together with Proposition \ref{main} completes  the proof.

$(ii)$ We know that $A\neq \emptyset.$  If $A_{lg}:=A\cap (\rre^i)_{lg}=\emptyset,$ then for all $r\in T,$ $(\ep_{r},\a^*)=0$ which in turn implies that $(\pm\ep_r\pm\ep_s,\a^*)=0$ for all $r,s\in T.$ In other words, $A=\emptyset,$ a contradiction. So  $A_{lg}\neq \emptyset.$ We claim that  $|A_{lg}|=1.$  Suppose that $t_0\in T$ is such that $\a:=2\ep_{t_0}\in A_{lg}.$ If   $\b:=2\ep_t\in A$ for some $t\in T\setminus\{t_0\},$ then there are $k,k'\in\{\pm1\}$ with  $\d:=2\ep_{t_0}+k\a^*,\gamma:=2\ep_t+k'\a^*\in\rim.$ Now we have  using Lemma \ref{not-decom}($i$) that $(\d,\gamma)=2k'(\ep_{t_0},\a^*)+2k(\ep_{t},\a^*)=-2(\ep_{t_0},\ep_{t_0})-2(\ep_{t},\ep_{t})=-4(\ep_{t_0},\ep_{t_0})\neq 0.$ This implies that either $\d-\gamma\in R$ or $\d+\gamma\in R;$ but this is  a contradiction by  Lemma \ref{alphastar}. So $A_{lg}=\{\pm2\ep_{t_0}\};$  in particular, $$(\ep_t,\a^*)=0;\;\;\hbox{ for all $t\in T\setminus\{t_0\}.$}$$
Now fixing $t\in T\setminus\{t_0\},$ we have \begin{eqnarray*}2(\ep_{t_0}\pm\ep_t,\a^*)=2(\ep_{t_0},\a^*)\neq0;\end{eqnarray*}
in other words $\pm\ep_{t_0}\pm\ep_t\in A$ for all $t\in T\setminus\{t_0\}.$ These together with Proposition \ref{main} complete the proof.
\qed
\begin{Thm}
\label{type}  For $i\in I,$ $\rre^i$ is of type $A$ or $C$ and if $|I|=2,$ $\rre^i$ is of type $A.$
\end{Thm}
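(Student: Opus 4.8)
The plan is to prove the two assertions separately, using Propositions \ref{main} and \ref{pre} for the structure of $\aa$ and the classification of small orbits (Proposition \ref{smallorbits}) to discard the unwanted types. For the first assertion, write $A:=\pm\aa=\{\a\in\rre^i\mid(\a,\a^*)\neq0\}$ as in Proposition \ref{pre}; since $\aa=\{\a\mid 2(\a,\a^*)/(\a,\a)=1\}$ and $A=\pm\aa$, no root of $\rre^i$ can satisfy $2(\a,\a^*)/(\a,\a)=\pm2$, so that (C1) $2(\a,\a^*)/(\a,\a)\in\{0,\pm1\}$ for every $\a\in\rre^i$, while (C2) $\a-\b\in\rre^i$ for all $\a,\b\in\aa$ by Proposition \ref{main}. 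Because the isomorphism type of the locally finite root system $\rre^i$ is read off from its large full finite irreducible subsystems, and both (C1) and (C2) descend to any full finite irreducible subsystem $S$ meeting $\aa$ (using $\hbox{span}_\bbbf S\cap\rre^i=S$), I may assume $\rre^i$ is finite. Then the form is nondegenerate on the finite dimensional space $\v_{re}^i$ (Lemma \ref{real roots}), so there is a unique $\nu\in\v_{re}^i$ with $(\a,\nu)=(\a,\a^*)$ for $\a\in\v_{re}^i$; by (C1) and $\aa\neq\emptyset$ this $\nu$ is a nonzero weight with $\check\a(\nu)=2(\a,\a^*)/(\a,\a)$.

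Next I would show that $\w_i\nu$ is a small orbit. Lemma \ref{w-alpha-star} gives $w\a^*-\a^*\in\rre^i$ for $w\in\w_i$, and since $\nu$ is the orthogonal projection of $\a^*$ one checks $w\a^*=\a^*+(w\nu-\nu)$, hence $w\nu-w'\nu=w\a^*-w'\a^*$ and $(w\a^*,w'\a^*)=(w\nu,w'\nu)-(\nu,\nu)$. If $w\nu\neq w'\nu$, then Cauchy--Schwarz on the definite space $\hbox{span}_\bbbq\rre^i$ forces $(w\nu,w'\nu)\neq(\nu,\nu)$, so $(w\a^*,w'\a^*)\neq0$; as $w\a^*,w'\a^*\in\rim^\times$, the nonsingular axiom yields $w\a^*\pm w'\a^*\in R$ for one sign, and the $+$ sign is excluded by Lemma \ref{alphastar} (its $\a^*$-coordinate would be $2$), whence $w\nu-w'\nu=w\a^*-w'\a^*\in\rre^i$. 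This is exactly the condition of Definition \ref{small-minimal}. Proposition \ref{smallorbits} now discards $E_{6,7,8}$ and $F_4$ (they admit no small orbit); it discards $G_2,B,BC$ because there the representative of the small orbit is itself a root, forcing $\nu$ to be a root and hence $\check\nu(\nu)=2$, against (C1); and it discards $D$ because, taking $\nu$ $\w$-conjugate to the tabulated representative, $\aa$ contains two roots whose difference ($2\ep_2$ for the vector representative, or $\ep_1+\ep_2-\ep_3-\ep_4$ for a half-spin representative in $D_4$) is not a root, against (C2). Only types $A$ and $C$ survive.

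For the second assertion, assume $|I|=2$ and, for contradiction, that $\rre^1$ is of type $C$. By Proposition \ref{pre}(ii) I may write $\rre^1=\{\pm(\ep_r\pm\ep_s)\}$ with $2\ep_{t_0},\ \ep_{t_0}+\ep_t\in\aa$, and by Proposition \ref{main} I may pick $\gamma\in\aa\cap\rre^2$. By Lemma \ref{not-decom}(i) the vectors $\d_1:=2\ep_{t_0}-\a^*$, $\d_2:=(\ep_{t_0}+\ep_t)-\a^*$, $\d_3:=\gamma-\a^*$ are nonsingular, and using $(\v_{re}^1,\v_{re}^2)=\{0\}$, $(\a,\a^*)=(\a,\a)/2$ on $\aa$, and $(\a^*,\a^*)=0$ one computes $(\d_1,\d_3)=-(2\ep_{t_0},2\ep_{t_0})/2-(\gamma,\gamma)/2$ and $(\d_2,\d_3)=-(\ep_{t_0}+\ep_t,\ep_{t_0}+\ep_t)/2-(\gamma,\gamma)/2$, whose difference is $-(\ep_{t_0},\ep_{t_0})\neq0$. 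Hence one of $(\d_1,\d_3),(\d_2,\d_3)$ is nonzero; say $(\d_1,\d_3)\neq0$. The nonsingular axiom with Lemma \ref{alphastar} (excluding the $\a^*$-coordinate $-2$) then forces $2\ep_{t_0}-\gamma=\d_1-\d_3\in\rre^\times$, a real root with nonzero components in both $\v_{re}^1$ and $\v_{re}^2$; this contradicts Corollary \ref{note}, which gives $|\supp(\cdot)|=1$ for real roots. Thus no component is of type $C$, and both are of type $A$.

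The step I expect to be most delicate is the reduction to finite rank: in infinite dimensions nondegeneracy of the form on $\v_{re}^i$ does not supply the Riesz vector $\nu$, so the small-orbit argument genuinely has to be carried out inside finite full subsystems before Proposition \ref{smallorbits} can be invoked. The remaining care lies in the sign bookkeeping for the two oppositely normalized components in the second assertion, and in verifying that each excluded type really fails one of (C1), (C2).
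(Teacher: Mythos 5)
Your strategy is genuinely different from the paper's and, modulo one overlooked case, it works. The paper proves the first assertion by a direct case-by-case elimination: for each of the types $B$, $D$ and $BC$ it manipulates explicit roots to produce either two elements of $\aa$ whose difference is not a root (contradicting Proposition \ref{main}) or an integrality contradiction; it never passes through the small-orbit machinery. You instead reduce to a finite full irreducible subsystem, build the weight $\nu$ representing $(\cdot,\a^*)$, verify via $w\a^*-\a^*=w\nu-\nu$ and Cauchy--Schwarz that $\w\nu$ is a small orbit, and then read off the admissible types from Proposition \ref{smallorbits}. This buys uniformity --- in particular it disposes of $E_{6,7,8}$, $F_4$ and $G_2$, which the paper's Step 1 does not explicitly address --- at the price of invoking the classification of small orbits. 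For the second assertion the two arguments are close cousins: the paper shows that for $|I|=2$ all elements of $\aa$ have a common length (via $(\a^*-\b,\a^*-\a)=0$), which is incompatible with the type-$C$ description of $\aa$ in Proposition \ref{pre}(ii); you instead exploit the two lengths present in $\aa\cap\rre^1$ to manufacture an element $2\ep_{t_0}-\gamma$ or $(\ep_{t_0}+\ep_t)-\gamma$ of $\rre^\times$ with support of size two, against Corollary \ref{note}. Both computations are correct.

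The gap: your exclusion of type $B$ rests on the claim that ``the representative of the small orbit is itself a root, forcing $\check\nu(\nu)=2$.'' This fails for $B_3$, whose second small orbit $\w\omega_3$ has representative $\omega_3=\frac{1}{2}(\ep_1+\ep_2+\ep_3)$, which is not a root; moreover your condition (C1) is satisfied there, since $\check\a(\omega_3)\in\{0,\pm1\}$ for every root $\a$ of $B_3$. So if $\rre^i$ is finite of type $B_3$ and $\nu$ is conjugate to $\omega_3$, the mechanism you state does not apply. The case is nevertheless killed by (C2), exactly as in your treatment of $D_4$: here $\aa$ is $\w$-conjugate to $\{\ep_1,\ep_2,\ep_3\}\cup\{\ep_i+\ep_j\mid i<j\}$, and $\ep_1-(\ep_2+\ep_3)$ is not a root of $B_3$, contradicting Proposition \ref{main}. (That this configuration is not vacuous is shown by the $AB(1,3)$ entry of Theorem \ref{classification II}, where the nonsingular roots do project onto $\w\omega_3$; it is only the imaginary-type hypothesis, acting through (C2), that rules it out here.) With this one case added --- and a one-line remark that the $BC_1$ orbits $\{\pm k\ep_1\}$ with $k\geq 2$ are excluded by (C1) rather than by ``being roots'' --- your proof is complete.
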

\pf Suppose that $i\in I.$ Take $\aa$ to be defined as in Proposition \ref{main}. We carry out the proof through the following steps:
\smallskip


\noindent{\bf Step 1.} $\rre^i$ is not of types $B,D,BC:$ Take $$\aa_{sh}:=(\rre^i)_{sh}\cap\aa, \;\; \aa_{lg}:=(\rre^i)_{lg}\cap\aa\andd \aa_{ex}:=(\rre^i)_{ex}\cap\aa.$$

We first suppose that $\rre^i$ is of type $B_T$ for an  index set $T$ with $|T|\geq3.$ We may assume $\rre^i=\{0,\pm\ep_r,\pm\ep_r\pm\ep_s\mid r,s\in T;\; r\neq s\}.$
We  claim that $\aa_{sh}\neq \emptyset.$ Indeed, if $\aa_{sh}=\emptyset,$ then  for all $i\in T,$ $\ep_{i}\not\in \aa_{sh},$ so $(\a^*,\ep_i)=0$ by Lemma \ref{not-decom}($i$) and the fifth condition of a locally finite root supersystem. This implies that for all $i,j\in T,$ $(\pm\ep_i\pm\ep_j,\a^*)=0,$ i.e., $\aa_{lg}=\emptyset.$ So $\aa=\emptyset$ which  contradicts Proposition \ref{main}. Therefore, we have $\aa_{sh}\neq \emptyset.$ Fix   $i_0\in T$ and $p\in\{\pm1\}$ with $p\ep_{i_0}\in\aa,$ then $\a^*-p\ep_{i_0}\in \rim$ and $2(\a^*,p\ep_{i_0})=(\ep_{i_0},\ep_{i_0}).$ If there is $j\in T$ with  $(\a^*,\ep_j)=0,$ then we have $$(p\ep_{i_0}-\a^*,p\ep_{i_0}-\ep_j)=(p\ep_{i_0},p\ep_{i_0})-(p\ep_{i_0},p\ep_{i_0})/2=(p\ep_{i_0},p\ep_{i_0})/2\neq 0.$$
Therefore there is $q\in\{\pm1\}$ with $\gamma:=p\ep_{i_0}-\a^*+q(p\ep_{i_0}-\ep_j)\in R.$ It follows from Corollary \ref{note} that $q=-1$ and so $\gamma=\ep_j-\a^*\in R.$ Now using Lemma \ref{not-decom}, we get that $(\ep_j,\a^*)=(\ep_j,\ep_j)/2\neq 0$ which is a contradiction. Thus for all $j\in T,$ $(\a^*,\ep_j)\neq 0,$ in particular,  \begin{equation}\label{a short}\aa_{sh}=\{s_t\ep_t\mid t\in T\}\;\;\;\;\hbox{for some $s_t\in\{\pm1\}$ ($t\in T$).}\end{equation} Therefore  for $r,t\in T$ with $r\neq t,$ we have $$(s_t\ep_t,\a^*)=(s_t\ep_t,s_t\ep_t)/2=(s_r\ep_r,s_r\ep_r)/2=(s_r\ep_r,\a^*),$$ so $$(s_r\ep_r+s_t\ep_t,\a^*)=2(s_t\ep_t,\a^*)=(s_t\ep_t,s_t\ep_t)=(s_r\ep_r+s_t\ep_t,s_r\ep_r+s_t\ep_t)/2.$$ This means that $$\{s_r\ep_r+s_t\ep_t\mid r,t\in T,\; r\neq t\}\sub\aa_{lg}.$$ Now pick distinct indices $r_1,r_2,r_3\in T.$ Then  $\a:=s_{r_1}\ep_{r_1},\b:=s_{r_2}\ep_{r_2}+s_{r_3}\ep_{r_3}\in\aa,$ but $\a-\b\not\in\rre^i.$ This  contradicts  Proposition \ref{main} and consequently,  $\rre^i$ cannot be of type $B_T.$
\smallskip

Now suppose that $R$ is of type $D_T$ for some    index set $T$ with $|T|\geq4.$ We may assume $R=\{0,\pm(\ep_i\pm\ep_j)\mid i,j\in T;\;i\neq j\}.$
As the subtract of two nonzero orthogonal  roots of $\rre^i$ is not a root, by Proposition \ref{main}, \begin{equation}
\label{type-d}
\parbox{4in}{$\aa$ dose not contain two nonzero orthogonal roots of $\rre^i.$}
\end{equation}
Contemplate Proposition \ref{main} and fix $i_0,j_0\in T$ as well as $r_0,s_0\in\{\pm 1\}$ with $r_0\ep_{i_0}+s_0\ep_{j_0}\in \aa.$ By (\ref{type-d}), Lemma \ref{not-decom} and the fifth condition of a locally finite root supersystem, $(r_0\ep_{i_0}-s_0\ep_{j_0},\a^*)=0$ and so \begin{equation}\label{equal2}(r_0\ep_{i_0},\a^*)=(s_0\ep_{j_0},\a^*).\end{equation}
Next we claim that for each $s\in T\setminus\{i_0\},$ there is $r_s\in\{\pm 1\}$ such that \begin{equation}\label{final5}(r_0\ep_{i_0}+r_s\ep_{s},\a^*)\neq 0\andd(r_0\ep_{i_0}- r_s\ep_{s},\a^*)=0.\end{equation} Indeed, using (\ref{type-d}), Lemma \ref{not-decom} and the fifth condition of a locally finite root supersystem, one can see that it is impossible to have $(r_0\ep_{i_0}+ \ep_{s},\a^*)\neq0$ and $(r_0\ep_{i_0}- \ep_{s},\a^*)\neq0.$ Also if $(r_0\ep_{i_0}\pm \ep_{s},\a^*)= 0,$ we have $(r_0\ep_{i_0},\a^*)=\pm (\ep_{s},\a^*).$ So $(r_0\ep_{i_0},\a^*)=0$ which together with (\ref{equal2}) contradicts the fact that $(r_0\ep_{i_0}+s_0\ep_{j_0},\a^*)\neq 0.$  This completes the proof  of the claim. Next we note that  as $(r_0\ep_{i_0}+r_s\ep_{s},\a^*)\neq 0,$ either  $r_0\ep_{i_0}+r_s\ep_{s}+\a^*\in R$ or $r_0\ep_{i_0}+r_s\ep_{s}-\a^*\in R.$ In the former case, we have using Lemma \ref{not-decom} that $$(r_0\ep_{i_0}+r_s\ep_{s},\a^*)=-(r_0\ep_{i_0}+r_s\ep_{s},r_0\ep_{i_0}+r_s\ep_{s})/2.$$ This together with   (\ref{equal2}), (\ref{final5}) the fact that  $r_0\ep_{i_0}+s_0\ep_{j_0}\in\aa,$ implies that {\small \begin{eqnarray*}
(r_0\ep_{i_0},r_0\ep_{i_0})=(r_0\ep_{i_0}+s_0\ep_{j_0},r_0\ep_{i_0}+s_0\ep_{j_0})/2
&=&(r_0\ep_{i_0}+s_0\ep_{j_0},\a^*)\\&=&2(r_0\ep_{i_0},\a^*)\\
&=&(r_0\ep_{i_0}+r_s\ep_{s},\a^*)\\&=&-(r_0\ep_{i_0}+r_s\ep_{s},r_0\ep_{i_0}+r_s\ep_{s})/2\\
&=&-(r_0\ep_{i_0},r_0\ep_{i_0}).\end{eqnarray*}}

\noindent This makes a contradiction. Therefore $r_0\ep_{i_0}+r_s\ep_{s}-\a^*\in R$ and so by Lemma \ref{not-decom},
\begin{equation}\label{dual}r_0\ep_{i_0}+r_s\ep_{s}\in \aa.\end{equation}
Using the same argument as above, for $t\in  T\setminus\{j_0\},$ one finds $k_t\in\{\pm 1\}$
with $s_0\ep_{j_0}+k_t\ep_t\in \aa.$ This together with (\ref{dual}) implies that    $ \a:=r_0\ep_{i_0}+r_t\ep_{t},\b:=s_0\ep_{j_0}+k_{t'}\ep_{t'}\in \aa$ for distinct elements $t,t'\in T\setminus\{i_0,j_0\},$ but $\a-\b\not\in\rre^i$ contradicting Proposition \ref{main}.

Finally, we assume $\rre^i$ is of type $BC_T$ for a nonempty  index set $T.$ We assume $\rre^i=\{\pm\ep_i,\pm(\ep_i\pm\ep_j)\mid i,j\in T\}.$ If $\aa_{sh}=\aa\cap(\rre^i)_{sh}\neq\emptyset,$ then there is $i_0\in T$ such that $(\ep_{i_0},\a^*)\neq 0.$ Therefore $(2\ep_{i_0},\a^*)\neq 0$ and so by Lemma \ref{not-decom} and the definition of a locally finite root supersystem, there are $r,s\in\{\pm1\}$ such that $2(\ep_{i_0},\a^*)/(\ep_{i_0},\ep_{i_0})=r$ and   $2(2\ep_{i_0},\a^*)/(2\ep_{i_0},2\ep_{i_0})=s.$ But this implies that $s=r/2$ which is a contradiction. So $\aa_{sh}=\emptyset,$ i.e. $(\ep_i,\a^*)=0$ for all $i\in T.$ Therefore $\aa=\emptyset,$ a contradiction.
\smallskip

\noindent{\bf Step 2.} If $|I|=2,$ $\rre^i$ is of type $A:$   We recall from Proposition \ref{main} that $\aa=\{\a\in \rre^i\setminus\{0\}\mid 2(\a,\a^*)=(\a,\a)\}$ is a nonempty set.
Take $j\in I\setminus\{i\},$ then using Proposition \ref{main}, one finds $\b\in \rre^j$ with $(\b,\a^*)=(\b,\b)/2.$ Set $\d:=\a^*-\b\in\rim$ and suppose $\a\in\aa.$ One knows that    $\gamma_\a:=\a^*-\a\in\rim.$ If $(\d,\gamma_\a)\neq0,$ then $R\cap\{\d+\gamma_\a,\d-\gamma_\a\}\neq\emptyset.$ Since $p_*(\d+\gamma_\a)=2\a^*,$ by Lemma \ref{alphastar}, $\d+\gamma_\a\not\in R,$ so $\d-\gamma_\a\in R;$ and using the same lemma, we get that  $\d-\gamma_\a\in \rre.$ This contradicts Corollary \ref{note}.  Therefore $(\d,\gamma_\a)=0.$ So \begin{eqnarray*}
0=(\d,\gamma_\a)=(\a^*-\b,\a^*-\a)&=&-(\a^*,\a)-(\a^*,\b)+(\a,\b)\\&=&-(\a,\a)/2-(\b,\b)/2.\end{eqnarray*} Thus we get  that
$$(\a_1,\a_1)=(\a_2,\a_2);\;\;\a_1,\a_2\in\aa.$$ This together with Step 1 and  Proposition \ref{pre} completes the proof.
\qed
\begin{Pro}
\label{pre-class}
Suppose that  $I\sub\{1,2\}$ is a nonempty set and for  $i\in I,$ $S_i$ is a locally finite root system of type $\dot A_{T_i}$ for an index set $T_i$ with $|T_i|\geq 2.$  Suppose further that $|T_1|\neq |T_2|$ if $I=2$ and $T_1, T_2$ are finite sets. Then up to isomorphism, there is a unique irreducible locally finite root supersystem of imaginary type whose real roots form a locally finite root system isomorphic to  $S:=\op_{i\in I} S_i.$
\end{Pro}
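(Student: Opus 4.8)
The plan is to settle existence by pointing to the explicit models already at hand, and then to prove uniqueness by showing that, once a nonsingular root $\a^*$ is fixed, the whole triple $(\v,\fm,R)$ is pinned down up to the rescaling allowed in an isomorphism and a Weyl-group change of coordinates. For existence, if $I=\{1\}$ the root supersystem $\dot A(0,T_1)$ of Example \ref{im-sys}(2) is an irreducible locally finite root supersystem of imaginary type whose real roots form $\dot A_{T_1}$, and if $I=\{1,2\}$ the root supersystem $\dot A(T_1,T_2)$ of Example \ref{im-sys}(3) works, its construction requiring precisely the hypothesis $|T_1|\neq|T_2|$ when both are finite. For uniqueness I would begin with an arbitrary such $(\v,\fm,R)$, fix $\a^*\in\rim^\times$, and apply Proposition \ref{conjugate}(ii) to get $\v=\bbbf\a^*\op\v_{re}$ and $\rim^\times=\w\a^*\cup-\w\a^*$. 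Hence $R=\rre\cup(\w\a^*\cup-\w\a^*)$ is completely determined by the datum $(\v_{re},\fm|_{\v_{re}},\rre)\cong S$ together with the functional $f:=(\a^*,\cdot)|_{\v_{re}}$ and the scalar $(\a^*,\a^*)=0$; so it suffices to show this datum is unique up to isomorphism.

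By Lemma \ref{inv-form}, on each component $\v_{re}^i$ the invariant form is determined up to a scalar, and I normalize $\rre^i=\{\ep^{(i)}_r-\ep^{(i)}_s\}$ so that every root of $\rre^i$ has square-length $2\lam_i$. Proposition \ref{pre}(i) shows that $f$ vanishes on $\v_{re}^i$ away from a single distinguished index $t_0^{(i)}$, and Proposition \ref{main} together with Lemma \ref{not-decom}(i) forces the surviving value to be exactly $\lam_i$, since $\a\in\aa$ gives $(\a,\a^*)=(\a,\a)/2$. Thus $f$ is fixed by the choice of the vertices $t_0^{(i)}$ and the scalars $\lam_i$, and the only remaining freedom is the ratio $\lam_1:\lam_2$ in the case $|I|=2$ (which is forced to be type $A$-times-$A$ by Theorem \ref{type}).

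The heart of the argument is that this ratio is rigid. Set $\aa_1:=\{\a\in\rre^1\setminus\{0\}\mid 2(\a,\a^*)/(\a,\a)=1\}$ and define $\aa_2$ analogously; both are nonempty by Proposition \ref{main}. Pick $\a\in\aa_1$ and $\b\in\aa_2$, and consider $\d:=\a^*-\a=r_\a\a^*$ and $\eta:=\a^*-\b=r_\b\a^*$, which lie in $\rim^\times$. Using $(\a^*,\a^*)=0$, $(\a,\a^*)=\lam_1$, $(\b,\a^*)=\lam_2$ and $(\a,\b)=0$ one gets $(\d,\eta)=-(\lam_1+\lam_2)$. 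If this were nonzero, the fifth axiom applied to the nonsingular root $\d$ and $\eta$ would force $\eta-\d=\a-\b\in R$ or $\eta+\d=2\a^*-\a-\b\in R$; but $\a-\b$ has support $\{1,2\}$ and so cannot be a real root by Corollary \ref{note} (nor nonsingular, its square-length being $2(\lam_1+\lam_2)\neq0$), while $2\a^*-\a-\b$ has $p_*$-coefficient $2$, impossible by Lemma \ref{alphastar}. Hence $(\d,\eta)=0$, i.e. $\lam_2=-\lam_1$. When $|I|=1$ there is no such constraint and $\lam_1$ is itself free.

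Finally I would assemble the isomorphism. The form is now determined up to a single overall scalar (namely $\lam_1$, which the scalar $r$ in the definition of isomorphism absorbs), and $f$ is determined by the distinguished vertices $t_0^{(i)}$. Comparing $R$ with the model $(\v_0,\fm_0,R_0)$ of Example \ref{im-sys}, I identify $\rre\cong S\cong (R_0)_{re}$; since $\rre$ and $\rim=\w\a^*\cup-\w\a^*$ are $\w$-invariant, the Weyl group acts by automorphisms of $R$, and as $\w_i\cong\operatorname{Sym}(T_i)$ is transitive I may move each $t_0^{(i)}$ to the reference vertex of the model so that $f$ matches. Extending the identification by $\a^*\mapsto\a^*_0$ gives a linear isomorphism $\vp$ with $\vp(\rre)=(R_0)_{re}$, $(\cdot,\cdot)=r(\vp\cdot,\vp\cdot)_0$ and $\vp(\a^*)=\a^*_0$; as $\vp$ conjugates reflections to reflections, $\vp\w\vp^{-1}=\w_0$, so $\vp(\rim^\times)=\w_0\a^*_0\cup-\w_0\a^*_0=(R_0)_{ns}^\times$ and therefore $\vp(R)=R_0$, giving uniqueness. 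The step I expect to be the main obstacle is exactly the rigidity $\lam_2=-\lam_1$: all within-component relations and all length-zero relations are automatically satisfied (every nonsingular root has square-length $0$ by the $\w$-invariance of the form, Lemma \ref{real roots}(a)), so the relative scaling of the two components is invisible to them, and it is only the root-string axiom applied to a pair of nonsingular roots living over different components that detects and forces it.
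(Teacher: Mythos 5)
Your proposal follows essentially the same route as the paper: existence from Example \ref{im-sys}(2),(3), and uniqueness by fixing $\a^*$, using Proposition \ref{conjugate}($ii$) to reduce $R$ to the datum $(\rre,(\a^*,\cdot)|_{\v_{re}})$, and Proposition \ref{pre}($i$) to pin down that functional before writing an explicit isomorphism. Your rigidity computation $\lam_2=-\lam_1$ is the same calculation the paper carries out in Step 2 of the proof of Theorem \ref{type}, and it is indeed what silently justifies the paper's assertion that a \emph{single} scalar $r$ relates the two forms on all of $\hbox{span}_\bbbf S$ even though $S=S_1\op S_2$ is reducible; making that explicit is a genuine improvement in transparency.

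One point in your final assembly is underdeveloped. The functional $(\a^*,\cdot)|_{\v_{re}^i}$ is determined not only by the distinguished vertex $t_0^{(i)}$ and the scalar $\lam_i$ but also by a sign, namely whether $\aa_i=\{\ep_{t_0}-\ep_t\mid t\neq t_0\}$ or $\aa_i=\{\ep_t-\ep_{t_0}\mid t\neq t_0\}$ (Proposition \ref{main} forces $\aa_i$ to be coherently oriented, but not which orientation). Moving $t_0^{(i)}$ by the Weyl group does not change this sign, and replacing $\a^*$ by $-\a^*$ flips it on both components simultaneously, so neither adjustment alone makes $f$ match the model when $|I|=2$ and the orientations disagree on exactly one component. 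The fix is to let the identification $\rre\cong S$ include $-\mathrm{id}$ on a component, which is an automorphism of $\dot A_{T_i}$ not lying in the Weyl group; this is precisely what the signs $k^{i}_1k^{i}_2$ in the paper's explicit map $\varphi$ accomplish. With that amendment your argument closes.
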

\pf Example \ref{im-sys}(2),(3) guarantees the existence of such locally finite root supersystems. Now suppose $(\v_1,\fm_1,R_1)$ and $(\v_2,\fm_2,R_2)$ are two irreducible locally finite root supersystems of imaginary type with  $(R_1)_{re}$ and $(R_2)_{re}$  isomorphic to  $S=\op_{i\in I} S_i.$ For $i\in I,$ suppose
$S_i=\{\ep^i_t-\ep^i_s\mid t,s\in T_i\}.$
By an identification, we may assume $(R_1)_{re}=(R_2)_{re}=S.$ So there is  a nonzero scalar $r\in\bbbf$ with
\begin{equation}\label{inv}r(u,v)_1=(u,v)_2 \hbox{ for all } u,v\in \hbox{span}_\bbbf S.\end{equation}
Suppose $j=1,2$ and fix $\a^*_j\in(R_j)_{ns}^\times.$ By Proposition \ref{pre}, for $i\in I,$ there is $t_i^j\in T_i$ such that $\{\a\in (R_j)_{re}^\times\mid (\a^*_j,\a)_j\neq 0\}=\cup_{i\in I}\{\pm (\ep^i_{t_i^j}-\ep^i_{t})\mid t\in T_i\setminus\{t_i^j\}\}$ and that   for  $i\in I,$  there is $k^{i}_j\in \{\pm1\}$ such that
$$(\a^*_j,\ep^i_{t_i^j}-\ep^i_{t})_j=-k^{i}_j(\ep^i_{t_i^j}-\ep^i_{t},\ep^i_{t_i^j}-\ep^i_{t})_j/2\andd (\a_j^*,\ep^i_{r}-\ep^i_{t})_j=0;\; t,r\in T\setminus\{t_i^j\}.$$  Now define $\varphi:\v_1\longrightarrow \v_2$ by $$\begin{array}{ll}\a_1^*\mapsto \a_2^*,\;\;
\ep^i_{t_i^1}-\ep^i_{t}\mapsto k^{i}_1k^{i}_2(\ep^i_{t_i^2}-\ep^i_{t}),\;\\\ep^i_{t_i^1}-\ep^i_{t_i^2}\mapsto -k^{i}_1k^{i}_2(\ep^i_{t_i^1}-\ep^i_{t_i^2})\;\;\;\;\;\;\;\;\;\;\;\;\;\;(t\in T\setminus\{t_i^1,t_i^2\},\;i\in I).
\end{array}$$ Then for $i\in I$ and $t,s\in T\setminus\{t_i^1,t_i^2\},$ by (\ref{inv}), we have
\begin{eqnarray*}
(\varphi(\a^*_1),\varphi(\ep^i_{t_i^1}-\ep^i_{t}))_2=
k^{i}_1k^{i}_2(\a^*_2,\ep^i_{t_i^2}-\ep^i_{t})_2
&=&-k^{i}_1k^{i}_2k^{i}_2(\ep^i_{t_i^2}-\ep^i_{t},\ep^i_{t_i^2}-\ep^i_{t})_2/2\\
&=&-k^{i}_1(\ep^i_{t_i^2}-\ep^i_{t},\ep^i_{t_i^2}-\ep^i_{t})_2/2\\
&=&-k^{i}_1(\ep^i_{t_i^1}-\ep^i_{t},\ep^i_{t_i^1}-\ep^i_{t})_2/2\\
&=&-rk^{i}_1(\ep^i_{t_i^1}-\ep^i_{t},\ep^i_{t_i^1}-\ep^i_{t})_1/2\\
&=&r(\a^*_1,\ep^i_{t_i^1}-\ep^i_{t})_1,
\end{eqnarray*}
and
\begin{eqnarray*}
(\varphi(\a^*_1),\varphi(\ep^i_{t_i^1}-\ep^i_{t_i^2}))_2&=&
-k^{i}_1k^{i}_2(\a^*_2,\ep^i_{t_i^1}-\ep^i_{t_i^2})_2\\
&=&k^{i}_1k^{i}_2(\a^*_2,\ep^i_{t_i^2}-\ep^i_{t_i^1})_2\\
&=&-k^{i}_1k^{i}_2k^{i}_2(\ep^i_{t_i^2}-\ep^i_{t_i^1},\ep^i_{t_i^2}-\ep^i_{t_i^1})_2/2\\
&=&-k^{i}_1(\ep^i_{t_i^1}-\ep^i_{t_i^2},\ep^i_{t_i^1}-\ep^i_{t_i^2})_2/2\\
&=&-rk^{i}_1(\ep^i_{t_i^1}-\ep^i_{t_i^2},\ep^i_{t_i^1}-\ep^i_{t_i^2})_1/2\\
&=&r(\a^*_1,\ep^i_{t_i^1}-\ep^i_{t_i^1})_1.
\end{eqnarray*}
Also we have
\begin{eqnarray*}
(\varphi(\ep^i_{t_i^1}-\ep^i_{t}),\varphi(\ep^i_{t_i^1}-\ep^i_{s}))_2&=&
k^{i}_1k^{i}_2k^{i}_1k^{i}_2(\ep^i_{t_i^2}-\ep^i_{t},\ep^i_{t_i^2}-\ep^i_{s})_2\\
&=&(\ep^i_{t_i^2}-\ep^i_{t},\ep^i_{t_i^2}-\ep^i_{s})_2\\
&=&r(\ep^i_{t_i^2}-\ep^i_{t},\ep^i_{t_i^2}-\ep^i_{s})_1
\end{eqnarray*}
and  by Lemma \ref{inv-form}, we get that
\begin{eqnarray*}
(\varphi(\ep^i_{t_i^1}-\ep^i_{t}),\varphi(\ep^i_{t_i^1}-\ep^i_{t_i^2}))_2&=&
-k^{i}_1k^{i}_2k^{i}_1k^{i}_2(\ep^i_{t_i^2}-\ep^i_{t},\ep^i_{t_i^1}-\ep^i_{t_i^2})_2\\
&=&-(\ep^i_{t_i^2}-\ep^i_{t},\ep^i_{t_i^1}-\ep^i_{t_i^2})_2\\
&=&-r(\ep^i_{t_i^2}-\ep^i_{t},\ep^i_{t_i^1}-\ep^i_{t_i^2})_1\\
&=&r(\ep^i_{t_i^2}-\ep^i_{t},\ep^i_{t_i^2}-\ep^i_{t_i^1})_1\\
&=&r(\ep^i_{t_i^1}-\ep^i_{t},\ep^i_{t_i^1}-\ep^i_{t_i^2})_1.
\end{eqnarray*}
But by Proposition \ref{conjugate}($ii$), for $j=1,2,$ $\{\a_j^*,\ep^i_{t_i^j}-\ep^i_{t}\mid i\in I,\; t\in T_i\setminus\{t_i^j\}\}$ is  a basis for $\v_j,$ so we get that $(\varphi(v),\varphi(u))_2=r(u,v)_1$ for all $u,v\in \v_1.$ On the other hand, one can see that $\varphi(r_\a(\b))=r_{\varphi(\a)}\varphi(\b).$ This together with Proposition \ref{conjugate}($ii$) and the fact that the Weyl group of $R_j$ is generated by the reflections based on the elements of $\{\ep^i_{t_i^j}-\ep^i_{t}\mid i\in I,\; t\in T_i\setminus\{t_i^j\}\}$ (see \cite[Lemma 5.1]{LN2}) implies that $\varphi(R_1)=R_2.$
This completes the proof.
\qed
\begin{Pro}
\label{pre-class2}
Suppose that $S$ is a locally finite root system of type $C_T$ for an index set $T$ with $|T|\geq 2.$ Then up to isomorphism, there is a unique irreducible locally finite root supersystem of imaginary type whose real roots form a locally finite root system isomorphic to $ S.$
\end{Pro}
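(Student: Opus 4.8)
The existence is already settled: Example \ref{im-sys}(1) exhibits the supersystem of type $\dot C(0,T)$, whose real roots form a system of type $C_T$. So the content is uniqueness, and I would model the argument on the proof of Proposition \ref{pre-class}, the essential simplification being that $S=C_T$ is irreducible; hence in the decomposition $\rre=\op_{i\in I}\rre^i$ we have $|I|=1$ and $\rre=\rre^1\cong C_T$. Let $(\v_1,\fm_1,R_1)$ and $(\v_2,\fm_2,R_2)$ be two irreducible locally finite root supersystems of imaginary type whose real roots are isomorphic to $S$. After an identification I may assume $(R_1)_{re}=(R_2)_{re}=S=\{\pm(\ep_r\pm\ep_s)\mid r,s\in T\}$ inside the common space $\hbox{span}_\bbbf\{\ep_t\mid t\in T\}$; since both forms restrict to invariant forms on the irreducible system $S$, Lemma \ref{inv-form} produces a nonzero scalar $r$ with $r(u,v)_1=(u,v)_2$ for all $u,v\in\hbox{span}_\bbbf S$, and in particular shows that $(\ep_t,\ep_t)_j$ is independent of $t$ for each $j$. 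For $j=1,2$ fix $\a^*_j\in(R_j)_{ns}^\times$; by Proposition \ref{conjugate}(ii) we have $\v_j=\bbbf\a^*_j\op\hbox{span}_\bbbf S$, so $\{\a^*_j\}\cup\{\ep_t\mid t\in T\}$ is a basis of $\v_j$.

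The crucial step is to read off the complete Gram data of $\a^*_j$. Applying Proposition \ref{pre}(ii) to $\rre^1$ relative to $\a^*_j$, I obtain a distinguished index $t_j\in T$ with $(\a^*_j,\ep_t)_j=0$ for all $t\neq t_j$ and with $\pm2\ep_{t_j}\in A_j:=\pm\aa_j$ (where $\aa_j$ is the set of Proposition \ref{main} for $R_j$). Choosing $\kappa_j\in\{\pm1\}$ so that $\kappa_j2\ep_{t_j}\in\aa_j$, Proposition \ref{main} gives $(\a^*_j,\ep_{t_j})_j=\kappa_j(\ep_{t_j},\ep_{t_j})_j$. Thus $\fm_j$ is completely determined in the chosen basis.

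I would then define $\varphi:\v_1\to\v_2$ by $\a^*_1\mapsto\a^*_2$, by $\ep_{t_1}\mapsto\kappa_1\kappa_2\ep_{t_2}$ and $\ep_{t_2}\mapsto\kappa_1\kappa_2\ep_{t_1}$ when $t_1\neq t_2$ (and $\ep_{t_0}\mapsto\kappa_1\kappa_2\ep_{t_0}$ when $t_1=t_2=:t_0$), and by $\ep_t\mapsto\ep_t$ for $t\notin\{t_1,t_2\}$. On $\hbox{span}_\bbbf S$ this $\varphi$ is a signed coordinate permutation, hence lies in the Weyl group of $S$, so it preserves both $S$ and $\fm_2$. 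A short check against the Gram data shows $(\varphi(u),\varphi(v))_2=r(u,v)_1$ for all pairs of basis vectors, the only nonroutine case being $(\varphi(\a^*_1),\varphi(\ep_{t_1}))_2=\kappa_1(\ep_{t_2},\ep_{t_2})_2=\kappa_1(\ep_{t_1},\ep_{t_1})_2=r(\a^*_1,\ep_{t_1})_1$, where the middle equality uses that all $\ep_t$ have equal length. Form compatibility then yields $\varphi(r_\a(\b))=r_{\varphi(\a)}\varphi(\b)$ for $\a\in\rre^\times$, so $\varphi r_\a\varphi^{-1}=r_{\varphi(\a)}$ conjugates the Weyl group of $R_1$ onto that of $R_2$; since $\varphi((R_1)_{re})=S=(R_2)_{re}$ and, by Proposition \ref{conjugate}(ii), $(R_j)_{ns}=\w\a^*_j\cup-\w\a^*_j$, I conclude $\varphi(R_1)=R_2$, i.e. $R_1\cong R_2$.

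The main obstacle is the bookkeeping in the second and third paragraphs: Proposition \ref{pre}(ii) pins down $\a^*_j$ only through the sign- and length-ambiguous data $(t_j,\kappa_j)$, and the delicate point is to absorb these ambiguities into a single signed permutation $\varphi$ that simultaneously respects the bilinear form (requiring the normalization $(\ep_t,\ep_t)_j=c_j$ constant in $t$) and carries nonsingular roots to nonsingular roots. Once $\varphi$ is correctly defined, every verification reduces to the intertwining identity $\varphi\circ r_\a=r_{\varphi(\a)}\circ\varphi$ together with the orbit description of $\rim^\times$ from Proposition \ref{conjugate}(ii).
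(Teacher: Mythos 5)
Your proposal is correct and follows essentially the same route as the paper: existence via Example \ref{im-sys}(1), identification of the real parts with a single scalar $r$ from Lemma \ref{inv-form}, extraction of the data $(t_j,k_j)$ from Proposition \ref{pre}(ii) and Lemma \ref{not-decom}($i$), and an explicit signed-permutation isomorphism $\varphi$ checked against Proposition \ref{conjugate}($ii$). The only (immaterial) difference is that the paper sends $\ep_t\mapsto k_1k_2\ep_t$ for $t\notin\{t_1,t_2\}$ while you use the identity there; both preserve the diagonal form and the set $C_T$, so the verification goes through identically.
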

\pf One knows from Example \ref{im-sys}(1) that such locally finite root supersystems  exist. Now suppose $(\v_1,\fm_1,R_1)$ and $(\v_2,\fm_2,R_2)$ are two irreducible locally finite root supersystems of imaginary type whose real roots form locally finite root systems isomorphic to $S.$ Suppose  $S=\{\pm(\ep_r\pm\ep_s)\mid r,s\in T\}.$  Using an identification, we may assume $R_1=R_2=S$ and so there is a nonzero scalar $r\in\bbbf$ such that $$r(u,v)_1=(u,v)_2;\;\;\hbox{for all } u,v\in\hbox{span}_\bbbf S.$$ For $j=1,2,$ fix $\a^*_j\in(R_j)_{ns}^\times.$ By Proposition \ref{pre}, there is  $t_j\in T$ such that $\{\a\in (R_j)_{re}^\times\mid (\a^*_j,\a)\neq 0\}=\{\pm 2\ep_{t_j},\pm (\ep_{t_j}\pm\ep_{t})\mid t\in T\setminus\{t_j\}\}.$ By Proposition \ref{pre} and Lemma \ref{not-decom}($i$), there are $k_j\in \{\pm1\}$ such that $$(\a^*_j,\ep_{t_j})_j=-k_j(\ep_{t_j},\ep_{t_j})_j\hbox{ and } (\a_j^*,\ep_{t})_j=0$$ for all $t\in T\setminus\{t_j\}.$ Now define $\varphi:\v_1\longrightarrow \v_2$ by $$\begin{array}{ll}\a_1^*\mapsto \a_2^*,\;\; \; \ep_{t_1}\mapsto k_1k_2\ep_{t_2},\;\;\; \ep_{t_2}\mapsto k_1k_2\ep_{t_1},\;\;\ep_{t}\mapsto k_1k_2\ep_t;\;t\in T\setminus\{t_1,t_2\}.
\end{array}$$
Now as in the previous Proposition, one can see that $\varphi$ defines an isomorphism between $(\v_1,\fm_1,R_1)$ and $(\v_2,\fm_2,R_2).$\qed

Using Theorem \ref{type}, Propositions \ref{pre-class}, \ref{pre-class2} and Example \ref{im-sys}, we have the following theorem:
\begin{Thm}[Classification Theorem for Imaginary Type]\label{classification I}
Suppose that $T,P$ are index sets with $|T|,|P|\geq2$ and $|T|\neq|P|$ if $T,P$ are both finite. Fix a symbol $\a^*$ and pick $t_0\in T$ and $p_0\in P.$ Consider the free $\bbbf$-module $X:=\bbbf\a^*\op\op_{t\in T}\bbbf\ep_t\op\op_{p\in P}\bbbf\d_p$ and define the symmetric bilinear form $$\fm:X\times X\longrightarrow \bbbf$$ by $$\begin{array}{ll}
(\a^*,\a^*):=0,(\a^*,\ep_{t_0}):=1,(\a^*,\d_{p_0}):=1\\
(\a^*,\ep_t):=0,(\a^*,\d_q):=0&t\in T\setminus\{t_0\},q\in P\setminus\{p_0\}\\
(\ep_t,\d_p):=0,(\ep_t,\ep_s):=\d_{t,s},(\d_p,\d_q):=-\d_{p,q}&t,s\in T,p,q\in P.
\end{array}$$
 Take $R$ to be $\rre\cup \rim^\times$ as in the following table
$${\small
\begin{tabular}{|c|c|c|}
\hline
type &$\rre$&$\rim^\times$
\\\hline
$\dot A(0,T)$& $\{\ep_t-\ep_s\mid t,s\in T\}$&$\pm\w\a^*$\\
\hline
$\dot C(0,T)$& $\{\pm(\ep_t\pm\ep_s)\mid t,s\in T\}$&$\pm\w\a^*$\\
\hline
$\dot A(T,P)$& $\{\ep_t-\ep_s,\d_p-\d_q\mid t,s\in T,p,q\in P\}$&$\pm\w\a^*$\\
\hline
\end{tabular}}$$
in which $\w$ is the subgroup of $GL(X)$ generated by the reflections $r_\a$ $(\a\in \rre\setminus\{0\})$ mapping $\b\in X$ to $\b-\frac{2(\b,\a)}{(\a,\a)}\a,$ then $(\v:=\hbox{span}_\bbbf R,\fm\mid_{\v\times\v}, R)$  is an irreducible locally finite root supersystem of imaginary type and conversely each irreducible locally finite root supersystem of imaginary type  is isomorphic to one and only one of these root supersystems; in particular each irreducible locally finite root supersystem is a direct union of irreducible finite root supersystems of the same type.
\end{Thm}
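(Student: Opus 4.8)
The plan is to read off the classification from the structural results already in hand, separating existence from completeness and then deducing the ``one and only one'' clause. For existence nothing new is required: Example \ref{im-sys}(1),(2),(3) already exhibits $\dot C(0,T)$, $\dot A(0,T)$ and $\dot A(T,P)$ as irreducible locally finite root supersystems of imaginary type and, in each case, displays the system as a direct union of finite root supersystems of the same type. Thus the forward direction of the statement and the closing ``in particular'' clause (for imaginary type) follow by citing these examples, and it remains to prove that the list is exhaustive and irredundant.

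For completeness I would start from an arbitrary irreducible locally finite root supersystem $(\v,\fm,R)$ of imaginary type with $\rre\neq\{0\}$ and write $\rre=\op_{i\in I}\rre^i$ as in the running assumption. Proposition \ref{cardinality} gives $|I|\leq 2$, and Theorem \ref{type} forces each $\rre^i$ to be an irreducible locally finite root system of type $A$ or $C$ --- hence isomorphic to some $\dot A_{T_i}$ or $C_{T_i}$ with $|T_i|\geq 2$ --- with all components of type $A$ when $|I|=2$. This leaves exactly three shapes for $\rre$: a single $\dot A_T$, a single $C_T$, or $\dot A_T\op\dot A_P$. In the first two cases Propositions \ref{pre-class} (with $I=\{1\}$) and \ref{pre-class2} determine $R$ up to isomorphism from its real part, giving $R\cong\dot A(0,T)$ and $R\cong\dot C(0,T)$ respectively.

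The genuinely delicate case is $\rre\cong\dot A_T\op\dot A_P$, where applying Proposition \ref{pre-class} requires first verifying its hypothesis that $|T|\neq|P|$ whenever both sets are finite; I expect this to be the main obstacle, and I would settle it by nondegeneracy. Fix $\a^*\in\rim^\times$, so that $\v=\bbbf\a^*\op\v_{re}$ by Proposition \ref{conjugate}(ii), and let $\omega_i\in\v_{re}^i$ be the unique vector representing the functional $(\a^*,\cdot)\mid_{\v_{re}^i}$ with respect to the form, which is nondegenerate on each $\v_{re}^i$ by Lemma \ref{real roots}(d). Then $z:=\a^*-\omega_1-\omega_2$ is orthogonal to $\v_{re}$ and has $\a^*$-component $1$, hence lies outside $\v_{re}$ and is nonzero; since $\v=\bbbf z\op\v_{re}$, nondegeneracy of $\fm$ forces $(z,z)\neq 0$. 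Proposition \ref{pre}(i) identifies each $\omega_i$ as a scalar multiple of a fundamental weight of the corresponding type-$A$ component, and the nonsingular-root axiom applied to $\gamma_i:=\a^*-\a_i$ (for $\a_i\in\rre^i$ with $2(\a_i,\a^*)=(\a_i,\a_i)$), whose sum and difference are precluded from being roots by Corollary \ref{note} and Lemma \ref{alphastar}, forces $(\gamma_1,\gamma_2)=0$ and hence the forms on the two components to be negatives of one another. With these normalizations a short computation yields $(z,z)$ proportional to $1/|T|-1/|P|$, so $(z,z)\neq 0$ is exactly the condition $|T|\neq|P|$; thus the hypothesis of Proposition \ref{pre-class} holds automatically and $R\cong\dot A(T,P)$. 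In the finite case one may alternatively invoke Lemma \ref{finite}(ii) to reduce to the classification of finite root supersystems.

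Finally, the ``one and only one'' assertion follows from Remark \ref{rem3}(ii): isomorphic irreducible locally finite root supersystems have isomorphic real parts, and the three real parts $\dot A_T$, $C_T$ and $\dot A_T\op\dot A_P$ are pairwise non-isomorphic locally finite root systems, differing in their number of irreducible components and in type. Within the family $\dot A(T,P)$ the real part determines the unordered pair $\{|T|,|P|\}$, and the evident symmetry $\dot A(T,P)\cong\dot A(P,T)$ shows this is the only coincidence; hence each imaginary-type system is isomorphic to precisely one member of the list. Combining exhaustiveness with the direct-union descriptions of Example \ref{im-sys} then gives the closing statement for imaginary type, the real-type case being established separately.
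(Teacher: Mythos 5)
Your proposal is correct and follows essentially the same route as the paper, which offers no separate argument for this theorem beyond the one-line citation of Theorem \ref{type}, Propositions \ref{pre-class} and \ref{pre-class2}, and Example \ref{im-sys}; you assemble exactly these ingredients (together with Proposition \ref{cardinality} for $|I|\leq 2$ and Remark \ref{rem3}(ii) for the ``one and only one'' clause). The one place where you go beyond the paper is genuinely worthwhile: the paper never verifies that an imaginary-type system with real part $\dot A_T\op\dot A_P$ and $T,P$ both finite must have $|T|\neq|P|$, which is needed before Proposition \ref{pre-class} can be invoked (the excluded case $|T|=|P|$ is in fact realized, but only as the \emph{real}-type supersystem $A(\ell,\ell)$ of Example \ref{lf} and Theorem \ref{classification II}). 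Your nondegeneracy argument closes this gap correctly: with the forms on the two components normalized to be negatives of one another (which follows, as in Step 2 of the proof of Theorem \ref{type}, from $(\a,\a)=-(\b,\b)$ for $\a\in\aa_1$, $\b\in\aa_2$), one gets $(\omega_i,\a^*)=(\omega_i,\omega_i)$ and hence $(z,z)=-(\omega_1,\omega_1)-(\omega_2,\omega_2)=\tfrac{1}{|T|}-\tfrac{1}{|P|}$, so nondegeneracy of the form on $\v=\bbbf z\op\v_{re}$ forces $|T|\neq|P|$. (Just note that your general definition of $\omega_i$ as a representing vector only makes sense in the finite-rank case, but that is the only case in which the verification is needed, since the hypothesis of Proposition \ref{pre-class} is vacuous otherwise.) The alternative you mention, reducing the finite case to Serganova's classification via Lemma \ref{finite}(ii), is presumably what the paper has implicitly in mind.
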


\subsection{Real type}
In this subsection, we assume that  $R$ is an irreducible locally finite root supersystem of  real type. We recall that  $\{0\}\neq \rre=\op_{i\in I} \rre^i$ is the decomposition of $R$ into nonzero irreducible subsystems and that $\v_{re}^i=\hbox{span}_\bbbf \rre^i.$ So we have $$\v=\sum_{i\in I}\v_{re}^i.$$
For $i\in I,$ take $p_i:\v\longrightarrow \v_{re}^i$ to be the orthogonal projection map on $\v_{re}^i.$ For $\a\in R,$ we define  $\supp(\a):=\{i\in I\mid p_i(\a)\neq0\}$ and call it the {\it support} of $\a.$ We mention that if $\a\in \rre,$ then $|\supp(\a)|=1.$

\begin{Pro}
\label{direct union}
The irreducible locally finite root supersystem $R$ is a direct union of  finite root supersystems.
\end{Pro}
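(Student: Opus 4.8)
The plan is to exhibit $R$ as the direct union of the finite sub-supersystems $R\cap\v_\Sigma$, where $\v_\Sigma$ runs over the spans of finite full subsystems $\Sigma$ of $\rre$; the whole argument hinges on first showing that the index set $I$ is finite. If $\rim=\{0\}$ the claim is trivial, since then $R=\rre$ is an irreducible locally finite root system, which is the direct union of its finite full subsystems \cite{LN}, each of which is a finite root supersystem. So assume $\rim\neq\{0\}$ and fix $\d^*\in\rim^\times$. By Lemma \ref{tame} we have $(\d^*,\rre^i)\neq\{0\}$ for every $i\in I$, so $p_i(\d^*)\neq0$ and hence $\supp(\d^*)=I$; on the other hand $\d^*\in\v=\op_{i\in I}\v_{re}^i$ is a finite sum of its components, so $\supp(\d^*)$ is finite. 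Therefore $I$ is finite, and indeed $\supp(\d)=I$ for every $\d\in\rim^\times$ by the same reasoning.

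Next I would set up the directed family. For each $i\in I$ let $\Sigma^i$ be a nonzero finite full irreducible subsystem of the locally finite root system $\rre^i$; given such a choice, put $\Sigma:=\op_{i\in I}\Sigma^i$, $\v_\Sigma:=\hbox{span}_\bbbf\Sigma=\op_{i\in I}\hbox{span}_\bbbf\Sigma^i$, and $R_\Sigma:=R\cap\v_\Sigma$. Since $I$ is finite and each $\Sigma^i$ is finite, $\Sigma$ is finite and $\v_\Sigma$ is finite-dimensional. Because the finite full subsystems of each $\rre^i$ form a directed family whose union is $\rre^i$, the subspaces $\v_\Sigma$, and hence the sets $R_\Sigma$, form a directed family under inclusion.

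The heart of the argument is then to check that each $R_\Sigma$ is a finite root supersystem and that $R=\bigcup_\Sigma R_\Sigma$. First, $\Sigma\subseteq R_\Sigma\subseteq\v_\Sigma=\hbox{span}_\bbbf\Sigma$, so $\hbox{span}_\bbbf R_\Sigma=\v_\Sigma$, on which the form is nondegenerate as the orthogonal sum of the nondegenerate forms on the $\hbox{span}_\bbbf\Sigma^i$ (Lemma \ref{inv-form}). Since every real root has singleton support and each $\Sigma^i$ is full, $\rre\cap\v_\Sigma=\bigcup_{i\in I}(\rre^i\cap\hbox{span}_\bbbf\Sigma^i)=\Sigma$. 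For $\a\in\Sigma^i$ and $\b\in R_\Sigma$ one has $r_\a(\b)\in R$ and $r_\a(\b)\in\v_\Sigma$ (as $r_\a$ fixes each $\v_{re}^j$ with $j\neq i$ and preserves $\hbox{span}_\bbbf\Sigma^i$ because $\Sigma^i$ is reflection-closed), so $r_\a(\b)\in R_\Sigma$; and for $\gamma\in R_\Sigma\cap\rim$, $\b\in R_\Sigma$ with $(\b,\gamma)\neq0$, the supersystem axiom gives $\{\gamma-\b,\gamma+\b\}\cap R\neq\emptyset$, which meets $R_\Sigma$ since $\gamma\pm\b\in\v_\Sigma$. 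Thus $R_\Sigma$ is a locally finite root supersystem whose real roots $\Sigma$ form a \emph{finite} root system, and Lemma \ref{finite}(ii) immediately upgrades it to a finite root supersystem. For the union: every real root lies in some $\Sigma$, and for $\d\in\rim^\times$ the finitely many components $p_i(\d)\in\hbox{span}_\bbbq\rre^i$ lie in $\hbox{span}_\bbbf\Sigma^i$ once $\Sigma^i$ is chosen large enough, so $\d\in\v_\Sigma$; hence $R=\bigcup_\Sigma R_\Sigma$, as desired.

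The hard part is the finiteness of $I$. Once this is in hand, everything downstream is routine: the intersections $R\cap\v_\Sigma$ are manifestly sub-supersystems, and their finiteness is not proved by hand but handed over to Lemma \ref{finite}(ii). The delicate point worth stressing is that finiteness of $I$ is genuinely indispensable, not merely convenient: since every nonsingular root has full support $I$, no nonsingular root could ever sit inside a finite-dimensional slice $\v_\Sigma$ built from only finitely many components unless all of $I$ is already accounted for. It is precisely Lemma \ref{tame}, together with the fact that vectors of $\v$ have finite support, that forces $|I|<\infty$ and makes the finite-dimensional approximation possible.
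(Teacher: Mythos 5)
Your proof is correct, and its overall strategy is the one the paper uses: exhaust $R$ by the intersections of $R$ with finite-dimensional subspaces spanned by finite full irreducible subsystems of the $\rre^i$, check the supersystem axioms for each such intersection directly (spanning, nondegeneracy, reflection-closure, the nonsingular string condition), and then hand the finiteness of the resulting supersystem to Lemma \ref{finite}(ii). The one genuine point of divergence is your opening step: you deduce from Lemma \ref{tame} that $p_i(\d^*)\neq 0$ for every $i\in I$, hence $\supp(\d^*)=I$, and since $\d^*$ has finite support that $|I|<\infty$. This is a valid and rather clean observation (the $\v_{re}^i$ are mutually orthogonal with nondegenerate restricted forms, so $p_i(\d^*)=0$ would force $(\d^*,\rre^i)=\{0\}$), and it lets you index your directed family by a single finite product of the directed sets of finite full subsystems. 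The paper does not draw this conclusion here: it only uses that $\supp(\d)$ is finite for the fixed $\d$, and compensates by adjoining a second directed set $\Gamma$ of finite subsets $I_\gamma\subseteq I$ containing $\supp(\d)$, deferring the statement $\supp(\a)=I$ (and $|I|\leq 3$) to Proposition \ref{serg5}, where it is obtained through Serganova's finite-type results. Your shortcut buys a leaner indexing scheme and makes the "every nonsingular root eventually lands in some $\v_\Sigma$" step transparent; the paper's version is more conservative in what it assumes about $I$ at this stage but reaches the same endpoint. Everything downstream in your argument (fullness giving $\rre\cap\v_\Sigma=\Sigma$, stability under $r_\a$ because $\a\in\v_\Sigma$, the string condition being inherited because $\gamma\pm\b\in\v_\Sigma$) checks out.
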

\pf If $\rim=\{0\},$ then $R=\rre$ and so $|I|=1.$ In this case, we are done using \cite[Corollary 3.15]{LN}. Now assume $\rim\neq \{0\}.$ Fix $0\neq \d\in\rim$ and suppose that  $\supp(\d)=\{i_1,\ldots,i_n\}.$ For $1\leq j\leq n,$ take $T_{i_j}\sub \rre^{i_j}$ to be a finite set with $p_{i_j}(\a)\in\hbox{span}_\bbbf T_{i_j}$ and for $i\in I\setminus\{i_1,\ldots,i_n\},$ set $T_i:=\emptyset.$ Now for $i\in I,$ take $\Lam_i$ to be an index set such that $\{(\rre^i)_{\lam_i}\mid \lam_i\in\Lam_i\}$ is the class of all  irreducible full subsystems of $\rre^i$ containing $T_i.$ We know that for $i\in I,$ $\Lam_i$ is a directed set under the ordering  $``\preccurlyeq"$  defined by $\lam\preccurlyeq\mu$ if $(\rre^i)_{\lam}\sub (\rre^i)_{\mu}.$ One knows that $\rre^i$ is the direct union of $\{(\rre^i)_{\lam_i}\mid \lam_i\in\Lam_i\}.$  Set $\Lam:=\Pi_{i\in I}\Lam_i,$ the Cartesian product of $\Lam_i$'s.  For $\lam=(\lam_i)_{i\in I},\mu=(\mu_i)_{i\in I}\in\Lam,$ we say $\lam\preccurlyeq\mu$ if $\lam_i\preccurlyeq\mu_i$ for all $i\in I.$ Next we take $\{I_\gamma\mid \gamma\in\Gamma\},$ where $\Gamma$ is an index set, to be the class of all finite subsets of $I$ containing $\{i_1,\ldots,i_n\}.$ We consider the ordering $``\preccurlyeq"$ on $\Gamma$ defined by $\gamma_1\preccurlyeq\gamma_2$ ($\gamma_1,\gamma_2\in \Gamma$) if $I_{\gamma_1}\sub I_{\gamma_2}.$ For $\lam=(\lam_i)_{i\in I}\in\Lam$ and $\gamma\in \Gamma,$ we set $\v^{(\lam,\gamma)}:=\sum_{i\in I_\gamma}(\v_{re}^i)_{\lam_i}$ in which $(\v_{re}^i)_{\lam_i}:=\hbox{span}_{\bbbf}(\rre^i)_{\lam_i}.$  For pairs $(\lam,\gamma),(\lam',\gamma')\in \Lam\times\Gamma,$ we say $(\lam,\gamma)\preccurlyeq(\lam',\gamma')$ if $\lam\preccurlyeq\lam'$ and $\gamma\preccurlyeq\gamma'.$ Then $\v$ is the direct union of $\{\v^{(\lam,\gamma)}\mid (\lam,\gamma)\in\Lam\times\Gamma\}.$ Now set $$R^{(\lam,\gamma)}:=R\cap \v^{(\lam,\gamma)};\;\;(\lam,\gamma)\in\Lam\times\Gamma.$$ We can see that  $R=\cup_{(\lam,\gamma)\in\Lam\times \Gamma}R^{(\lam,\gamma)}.$ Now we fix $(\lam=(\lam_i)_{i\in I},\gamma)\in\Lam\times \Gamma$ and show that $R^{(\lam,\gamma)}$ is a finite root supersystem in $\v^{(\lam,\gamma)}:$

\begin{itemize}
\item{$R^{(\lam,\gamma)}$ spans $\v^{(\lam,\gamma)}:$} Indeed we have
\begin{eqnarray*}
  \v^{(\lam,\gamma)}=\sum_{i\in I_\gamma}\v_{re}^i=\sum_{i\in I_\gamma}\hbox{span}_{\bbbf}(\rre^i)_{\lam_i}
\sub\hbox{span}_\bbbf R^{(\lam,\gamma)}\sub \v^{(\lam,\gamma)}.
\end{eqnarray*}

\smallskip

\item{The form restricted to $\v^{\lam,\gamma}$ is nondegenerate:} We know from Lemma \ref{inv-form} that  the form $\fm$ restricted to $(\v^i_{re})_{\lam_i},$ $i\in I_\gamma,$ is nondegenerate. Now as $((\v^i_{re})_{\lam_i},(\v^j_{re})_{\lam_j})=\{0\}$ $(i,j\in I_\gamma,\;i\neq j),$ we get that the form restricted to $\v^{\lam,\gamma}$ is nondegenerate.

\smallskip

\item{$R^{(\lam,\gamma)}=-R^{(\lam,\gamma)}$:} It is immediate.

\smallskip

\item{$R^{(\lam,\gamma)}\cap\rre$ is a finite set:} We know that $$R^{(\lam,\gamma)}\cap\rre\sub \rre\cap\v^{(\lam,\gamma)}\sub\rre\cap(\v_{re}\cap\v^{(\lam,\gamma)}). $$
Now as  $\rre$ is locally finite in $\v_{re},$ we get that $\rre\cap(\v_{re}\cap\v^{(\lam,\gamma)})$  is a finite set  and consequently so is $R^{(\lam,\gamma)}\cap\rre.$

\smallskip

\item{  $R^{(\lam,\gamma)}$ is invariant under $r_\a$ for $\a\in \rre^\times\cap R^{(\lam,\gamma)}:$ Suppose  $\d\in R^{(\lam,\gamma)},$ then  $r_\a(\d)\in R\cap\v^{(\lam,\gamma)}$ and so we are done. }

\smallskip

\item{for $\a,\d\in R^{(\lam,\gamma)}$ with $(\a,\a)=0$ and $(\a,\d)\neq 0,$ $\{\a\pm\d\}\cap R^{(\lam,\gamma)}\neq \emptyset:$  It is immediate as $\{\a\pm\d\}\cap R^{(\lam,\gamma)}=\{\a\pm\d\}\cap R.$ }

\smallskip

\item{$R^{(\lam,\gamma)}$ is finite:} The above items imply that $$(\v^{(\lam,\gamma)},\fm\mid_{_{\v^{(\lam,\gamma)}\times \v^{(\lam,\gamma)}}},R^{(\lam,\gamma)})$$ is a locally finite root supersystem whose real roots form a finite root system. Now Lemma \ref{finite} implies that $R^{(\lam,\gamma)}$ is finite.
\end{itemize}
This completes the proof.\qed

\begin{lem}
\label{serg3}
(i) For $\a\in R^\times,$ $\supp(\a)\neq\emptyset.$

(ii) If $\a,\b\in \rim$ are such that either $(\a,\b)\neq0$ or $\supp(\a)\cap \supp(\b)\neq\emptyset,$ then either  $\supp(\a)\sub\supp(\b)$ or $\supp(\b)\sub\supp(\a).$
\end{lem}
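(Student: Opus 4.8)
The plan is to reduce both clauses to the conjugacy statement for nonsingular roots, Proposition \ref{conjugate}(i), after first recording that $\supp$ is constant on Weyl group orbits. The key preliminary is that for $\gamma\in\rre^j\setminus\{0\}$ the reflection $r_\gamma$ fixes $\v_{re}^i$ pointwise for every $i\neq j$ (since $\gamma\perp\v_{re}^i$) and restricts to an invertible reflection of $\v_{re}^j$. Hence for any $\a\in\v$ we have $p_i(r_\gamma\a)=p_i(\a)$ for $i\neq j$ and $p_j(r_\gamma\a)=r_\gamma(p_j\a)$, the latter being zero exactly when $p_j\a=0$. Thus $\supp(r_\gamma\a)=\supp(\a)$; as $\w$ is generated by such reflections and clearly $\supp(-\a)=\supp(\a)$, we obtain $\supp(w\a)=\supp(\a)$ for all $w\in\w$. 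Part (i) is then immediate: if $\supp(\a)=\emptyset$ then $\a=\sum_{i\in I}p_i(\a)=0$, contradicting $\a\in\rcross$.

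For (ii) I would first discard the case $\a=0$ or $\b=0$, in which both hypotheses fail because $\supp(0)=\emptyset$; so assume $\a,\b\in\rim^\times$. If $(\a,\b)\neq0$, then Proposition \ref{conjugate}(i) gives $\b\in\w\a\cup-\w\a$, whence $\supp(\b)=\supp(\a)$ by the preliminary and the supports are nested. If instead $(\a,\b)=0$ but there is $j\in\supp(\a)\cap\supp(\b)$, then $p_j\a$ and $p_j\b$ are nonzero vectors of $\v_{re}^j$, and the idea is to produce a single root $\gamma\in\rre^j$ with $(\gamma,\a)=(\gamma,p_j\a)\neq0$ and $(\gamma,\b)=(\gamma,p_j\b)\neq0$. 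Granting such a $\gamma$, the element $r_\gamma\a$ is a nonsingular root (it lies in $R$ by the reflection axiom and is nonsingular since $r_\gamma$ is an isometry, Lemma \ref{real roots}(a)), it has $\supp(r_\gamma\a)=\supp(\a)$, and it satisfies $(r_\gamma\a,\b)=-\frac{2(\gamma,\a)}{(\gamma,\gamma)}(\gamma,\b)\neq0$; applying the previous case to $r_\gamma\a$ and $\b$ yields $\supp(\b)=\supp(r_\gamma\a)=\supp(\a)$. So under the hypotheses of (ii) the supports are in fact equal, which certainly implies the asserted nesting.

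The hard part will be producing the connecting root $\gamma$. If no such $\gamma$ existed, then every root of $\rre^j$ would be orthogonal to $p_j\a$ or to $p_j\b$, i.e. $\rre^j\sub(p_j\a)^\perp\cup(p_j\b)^\perp$ inside $\v_{re}^j$; since the form is nondegenerate on $\v_{re}^j$ (Lemma \ref{inv-form}, together with the orthogonal splitting $\v_{re}=\op_{i\in I}\v_{re}^i$) and $p_j\a,p_j\b\neq0$, these are two proper hyperplanes. Thus the whole lemma rests on the auxiliary fact that an irreducible locally finite root system is never contained in the union of two proper hyperplanes. I would establish this fact separately by a type-by-type argument on an integral base of $\rre^j$ (types $\dot A,B,C,D$, with $BC$ analogous): one picks indices witnessing the two hyperplanes and exhibits a root lying outside both, and the locally finite case reduces to irreducible finite full subsystems since $\rre^j$ is their direct union. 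This hyperplane-covering statement is the genuine obstacle, and it is the only place where the irreducibility of the individual component $\rre^j$ enters the argument.
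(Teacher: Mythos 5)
Your argument is correct in outline but takes a genuinely different route from the paper's. The paper disposes of this lemma in one line: by Proposition \ref{direct union}, $R$ is a direct union of finite root supersystems, and both assertions are then quoted from Serganova's results on generalized root systems (\cite[Lemmas 2.1, 2.2 and Corollary 2.4]{serg}). You instead argue directly inside $R$: the $\w$-invariance of $\supp$ (via the componentwise description of $r_\gamma$ for $\gamma\in\rre^j$), the deduction of (i), and the reduction of (ii) to Proposition \ref{conjugate}(i) by means of a connecting real root are all sound, and in fact give the stronger conclusion $\supp(\a)=\supp(\b)$, consistent with what Proposition \ref{serg5} later asserts. What your route buys is independence from the classification results for the finite case; what it costs is the auxiliary fact that an irreducible locally finite root system is never contained in $u^{\bot}\cup v^{\bot}$ for nonzero $u,v$ in its span. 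That fact is true, and your reduction of it to finite full irreducible subsystems is the right move, but as written it remains a sketch: in particular your list of types ($\dot A, B, C, D$, with $BC$ analogous) omits the finite exceptional systems $E_{6,7,8}$, $F_4$, $G_2$ that a component $\rre^j$ could a priori be. You should either cover those cases or replace the type-by-type check with a uniform argument (e.g., show that $\rre^j\setminus\{0\}\sub u^{\bot}\cup v^{\bot}$ forces a partition of $\rre^j\setminus\{0\}$ into two nonempty mutually orthogonal subsets, contradicting irreducibility). Until that covering lemma is actually proved, your proposal has exactly one load-bearing unproved step, which you have correctly identified as the crux.
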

\pf Using
Proposition \ref{direct union} together with \cite[Lemmas 2.1, 2.2 and Corollary 2.4]{serg}, we get the result.\qed

\medskip
For $\a,\b\in \rim\setminus\{0\},$ we say $\a,\b$ are {\it equivalent} and write $\a\sim\b$ if there is $\gamma\in\rim$ such that $\supp(\a)\cup\supp(\b)\sub\supp(\gamma).$ Using Lemma \ref{serg3}, one can see that this defines an equivalence relation on $\rim\setminus\{0\}.$ 
So $\rim\setminus\{0\}$ is the disjoint union of equivalence classes $\rim^k,$ where $k$  runs over an index set $K.$ Setting $S_k:=\cup_{\a\in\rim^k}\supp(\a)\sub I,$ we have the following lemma:

\begin{lem}\label{serg4}
(i) If $k,k'\in K$ with $k\neq k',$ then $S_k\cap S_{k'}=\emptyset.$

(ii) If $k\in K$ and $i\in I$ are such that $i\not\in S_k,$ then $(\rim^k,\rre^i)=\{0\}.$

(iii) If $k,k'\in K$ with $k\neq k',$ then $(\rim^k,\rim^{k'})=\{0\}.$

(iv) $|K|=1$ and if $K=\{k\},$ then $S_k=I.$
\end{lem}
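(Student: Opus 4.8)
The plan is to treat (i)--(iii) as quick consequences of Lemma~\ref{serg3} together with the orthogonality of the decomposition $\v=\op_{i\in I}\v_{re}^i$, reserving the real work for (iv), where irreducibility and Lemma~\ref{general} enter. For (i), I would suppose $k\neq k'$ yet $i\in S_k\cap S_{k'}$; then there are $\a\in\rim^k$ and $\b\in\rim^{k'}$ with $i\in\supp(\a)\cap\supp(\b)$, so by Lemma~\ref{serg3}(ii) the supports are nested, say $\supp(\a)\sub\supp(\b)$, and taking $\gamma=\b$ gives $\a\sim\b$, forcing $k=k'$, a contradiction. Part (iii) uses the same device: for $\a\in\rim^k$, $\b\in\rim^{k'}$ with $k\neq k'$, if $(\a,\b)\neq0$ then Lemma~\ref{serg3}(ii) again forces nested supports, hence $\a\sim\b$ and $k=k'$; so $(\a,\b)=0$.

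For (ii), the key point is that distinct components are orthogonal, i.e. $(\v_{re}^i,\v_{re}^j)=\{0\}$ for $i\neq j$. Given $\d\in\rim^k$ and $\b\in\rre^i$ with $i\notin S_k$, we have $i\notin\supp(\d)$, so $p_i(\d)=0$; writing $\d=\sum_{j}p_j(\d)$ (a finite sum, as $\supp(\d)$ is finite), every term is orthogonal to $\b\in\v_{re}^i$ except the $i$-th, which is $0$, whence $(\d,\b)=0$.

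The substantive statement is (iv). First I would record that each $\rim^k$ is invariant under $\w$. Since reflections preserve the form (Lemma~\ref{real roots}(a)) and are invertible, $r_\a(\d)\in\rim\setminus\{0\}$ for $\d\in\rim^k$, $\a\in\rre^\times$; the task is to check $r_\a(\d)\sim\d$. Writing $\a\in\rre^j$ (so $|\supp(\a)|=1$) and $r_\a(\d)=\d-\frac{2(\a,\d)}{(\a,\a)}\a$, the reflection alters only the $j$-th component, so $\supp(r_\a(\d))\sub\supp(\d)\cup\{j\}$; and if the coefficient is nonzero then $(\a,\d)\neq0$ gives $(\a,p_j(\d))\neq0$, hence $p_j(\d)\neq0$, i.e. $j\in\supp(\d)$. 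In either case $\supp(r_\a(\d))\sub\supp(\d)$, so with $\gamma=\d$ we obtain $r_\a(\d)\sim\d$ and thus $r_\a(\d)\in\rim^k$. Now $\{\rim^k\mid k\in K\}$ is a class of nonempty, $\w$-invariant, pairwise orthogonal (by (iii)) subsets partitioning $\rim\setminus\{0\}$; under the standing assumption $\rim\neq\{0\}$ we have $K\neq\emptyset$, and Lemma~\ref{general} forbids $|K|>1$, so $|K|=1$. Finally, writing $K=\{k\}$ (so $\rim^k=\rim\setminus\{0\}$), if some $i\in I\setminus S_k$ existed, then (ii) would give $(\rim,\rre^i)=\{0\}$, and combined with $(\rre^j,\rre^i)=\{0\}$ for $j\neq i$ this would write $R^\times=(\rre^i\setminus\{0\})\uplus\big(R^\times\setminus(\rre^i\setminus\{0\})\big)$ as a disjoint union of two nonempty orthogonal sets (the second containing $\rim\setminus\{0\}$), contradicting irreducibility; hence $S_k=I$.

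The main obstacle is the $\w$-invariance of the classes $\rim^k$: it is the only step that needs a genuine argument rather than a direct appeal to Lemma~\ref{serg3}, and it must be arranged precisely so that Lemma~\ref{general} applies verbatim. I would also flag the degenerate case $\rim=\{0\}$, where $R=\rre$ is an irreducible root system with $|I|=1$ and $K=\emptyset$, so that (i)--(iii) are vacuous and (iv) is to be read under the running hypothesis $\rim\neq\{0\}$.
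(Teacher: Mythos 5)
Your proof is correct and follows essentially the same route as the paper's: (i) and (iii) via Lemma~\ref{serg3}(ii), (ii) from the orthogonality of the components, and (iv) by establishing $\w$-invariance of the classes $\rim^k$ and then invoking Lemma~\ref{general} together with irreducibility. The only (cosmetic) difference is that you verify $\supp(r_\a(\d))\sub\supp(\d)$ by a direct projection computation, whereas the paper gets $r_\a(\d)\sim\d$ by citing Lemma~\ref{serg3}(i),(ii); both are valid and your caveat about the degenerate case $\rim=\{0\}$ is a reasonable reading of the standing hypotheses.
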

\pf
$(i)$ Suppose that $k,k'$ are distinct elements of $K$ and  $i\in S_k\cap S_{k'}.$ So there are $\a\in \rim^k$ and $\b\in\rim^{k'}$ such that $i\in\supp(\a)\cap\supp(\b).$ Now using Lemma \ref{serg3}, we get either  $\supp(\a)\sub\supp(\b)$ or $\supp(\b)\sub\supp(\a).$ This implies that  $\a\sim\b,$ a contradiction.

$(ii),(iii)$
It is trivial.
%
%

$(iv)$ If $\a\in \rre$  and $k\in K,$ then for $\b\in \rim^k,$ using  Lemma \ref{serg3}($i$), $\supp(r_\a(\b))\cap \supp(\b)\neq \emptyset,$ so by Lemma \ref{serg3}($ii$), we have  $r_\a(\b)\sim\b.$ Now using this together with part ($iii$) and Lemma \ref{general}, we get that $|K|=1.$ Next, we suppose that $K=\{k\}$ and  show that $I=S_k.$ To the contrary, suppose that $I\neq S_k,$ then we have  $R^\times=(\cup_{i\in I\setminus S_k}\rre^i\setminus\{0\})\uplus (\cup_{i\in S_k}\rre^i\setminus\{0\}\cup\rim^\times)$ with  $(\cup_{i\in I\setminus S_k}\rre^i\setminus\{0\}, (\cup_{i\in I}\rre^i\setminus\{0\}\cup\rim^\times))=\{0\}$ which contradicts the irreducibility of $R.$ So $I=S_k.$\qed

\begin{Pro}
\label{direct union-irr}
The irreducible  locally finite root supersystem $R$ is a direct union of irreducible finite root supersystems.
\end{Pro}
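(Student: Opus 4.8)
The plan is to upgrade Proposition \ref{direct union} by showing that, among the finite root supersystems $R^{(\lam,\gamma)}$ $((\lam,\gamma)\in\Lam\times\Gamma)$ constructed there, the \emph{irreducible} ones are cofinal; since a cofinal subfamily of a directed family has the same direct union, this yields the statement. The case $\rim=\{0\}$ is immediate: then $R=\rre$ with $|I|=1$, and by \cite[Corollary 3.15]{LN} it is the direct union of its irreducible finite full subsystems, each of which is an irreducible finite root supersystem (having no nonsingular roots). So I would assume $\rim\neq\{0\}$. The only obstruction to irreducibility of $R^{(\lam,\gamma)}$ is that its real part is $\op_{i\in I_\gamma}(\rre^i)_{\lam_i}$, a sum of $|I_\gamma|$ pairwise orthogonal irreducible root systems; to connect these inside a larger member of the family I would use a single nonsingular root as a hub.

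The key auxiliary fact I would establish first is that \emph{for every finite subset $J$ of $I$ there is $\eta\in\rim$ with $J\sub\supp(\eta)$.} By Lemma \ref{serg4}(iv) we have $|K|=1$ and $S_k=I=\cup_{\a\in\rim\setminus\{0\}}\supp(\a)$, so each $i\in J$ lies in $\supp(\a_i)$ for some $\a_i\in\rim\setminus\{0\}$, and all these $\a_i$ belong to the single equivalence class $\rim^k$. Iterating the defining relation of $\sim$, which is transitive by Lemma \ref{serg3}, produces $\eta\in\rim$ with $\cup_{i\in J}\supp(\a_i)\sub\supp(\eta)$, whence $J\sub\supp(\eta)$. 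Note that every such $\eta$ automatically has finite support, being a vector of the direct sum $\v=\op_{i\in I}\v_{re}^i$.

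Next, given $(\lam,\gamma)\in\Lam\times\Gamma$, I would apply this with $J=I_\gamma$ to obtain $\eta\in\rim$ with $I_\gamma\sub\supp(\eta)$, then choose $\gamma'\in\Gamma$ with $I_{\gamma'}=\supp(\eta)\supseteq I_\gamma$ and, using that each $p_i(\eta)\in\v_{re}^i=\cup_{\mu}(\v_{re}^i)_{\mu}$, choose $\lam'$ with $\lam\preccurlyeq\lam'$ and $p_i(\eta)\in(\v_{re}^i)_{\lam'_i}$ for all $i\in\supp(\eta)$. Then $(\lam,\gamma)\preccurlyeq(\lam',\gamma')$, and since $\eta=\sum_{i\in I_{\gamma'}}p_i(\eta)\in\v^{(\lam',\gamma')}$ we get $\eta\in R^{(\lam',\gamma')}$. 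To see that $R^{(\lam',\gamma')}$ is connected, hence irreducible by Lemma \ref{decom}(i), I would show that $\eta$ is connected to every nonzero root: for each $i\in I_{\gamma'}=\supp(\eta)$ we have $p_i(\eta)\neq0$ and $p_i(\eta)\in(\v_{re}^i)_{\lam'_i}$, on which the form is nondegenerate (by the corresponding step of Proposition \ref{direct union}), so $(\eta,\b)\neq0$ for some $\b\in(\rre^i)_{\lam'_i}$; as $(\rre^i)_{\lam'_i}$ is irreducible, $\eta$ is connected to every real root of that component. The same nondegeneracy argument applied to any nonsingular root $\zeta\in R^{(\lam',\gamma')}$, whose support is a nonempty subset of $I_{\gamma'}$, connects $\zeta$ to a real root and hence to $\eta$. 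Thus every nonzero root of $R^{(\lam',\gamma')}$ is connected to $\eta$, and $R^{(\lam',\gamma')}$ is connected.

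This shows the irreducible members $R^{(\lam',\gamma')}$ are cofinal in $\{R^{(\lam,\gamma)}\mid(\lam,\gamma)\in\Lam\times\Gamma\}$; as the cofinal subfamily is itself directed and $R=\cup_{(\lam,\gamma)}R^{(\lam,\gamma)}$ is a direct union, it follows that $R$ is the direct union of these irreducible finite root supersystems, completing the proof. The main obstacle I anticipate is the bookkeeping in the previous two paragraphs, namely placing a single hub $\eta$ inside a member of the family (matching $\supp(\eta)=I_{\gamma'}$ with the requirement $p_i(\eta)\in(\v_{re}^i)_{\lam'_i}$) and confirming that its non-orthogonality is witnessed by roots already present in the \emph{finite} subsystems $(\rre^i)_{\lam'_i}$ rather than merely in $\rre^i$; both points are settled by the nondegeneracy of the form on each $(\v_{re}^i)_{\lam'_i}$ together with the irreducibility of these finite subsystems.
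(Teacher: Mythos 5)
Your proposal is correct and follows essentially the same route as the paper's proof: reduce to cofinality of the irreducible members, use Lemma \ref{serg4}(iv) to produce a single nonsingular ``hub'' root whose support contains $I_\gamma$, enlarge $(\lam,\gamma)$ so that this root lies in $R^{(\lam',\gamma')}$, and then verify connectedness through it via nondegeneracy of the form on each $(\v_{re}^i)_{\lam'_i}$. The only difference is presentational: you spell out the iteration of the equivalence relation $\sim$ and the cofinality bookkeeping, which the paper leaves implicit.
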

\pf If $\rim=\{0\},$ we get the result using \cite[Corollary 3.15]{LN}. So suppose $\rim\neq\{0\}$ and  fix $\d\in \rim.$ Using the same notation as in Proposition \ref{direct union},
it is enough to show that for each pair $(\lam,\gamma)\in \Lam\times\Gamma,$
there is $(\lam',\gamma')\in \Lam\times\Gamma$ with $(\lam,\gamma)\preccurlyeq(\lam',\gamma')$
such that $R^{(\lam',\gamma')}$ is irreducible.

Let $\lam=(\lam_i)_{i\in I}\in\Lam$ and $\gamma\in\Gamma.$ We know that $\v^{(\lam,\gamma)}=\sum_{i\in I_\gamma}(\v_{re}^i)_{\lam_i}.$ By Lemma \ref{serg4}($iv$), for each $i\in I_\gamma,$ there is $\d_i\in \rim$ with $i\in \supp(\d_i).$ Again using Lemma \ref{serg4}($iv$), one finds $\d_0\in\rim$ with $\cup_{i\in I_\gamma}\supp(\d_i)\sub\supp(\d_0).$ Suppose that $\supp(\d_0)=\{i_1,\ldots,i_t\}$ and note that $I_\gamma\sub\supp(\d_0).$
For $1\leq j\leq t,$ there is a finite set $S_j\sub \rre^{i_j}$ with $p_{i_j}(\d_0)\in\hbox{span}_\bbbf(S_j).$ For $1\leq j\leq t,$ take $\mu_j\in\Lam_{i_j}$ to be such that $S_j\cup (\rre^{i_j})^{\lam_{i_j}}\sub (\rre^{i_j})^{\mu_j}.$ Also, take $\gamma'$ to be such that $I_{\gamma'}=\{i_1,\ldots,i_t\}$ and define $\lam'=(\lam'_i)_{i\in I}\in\Lam$ with $\lam'_{i_j}:=\mu_j$ and $\lam'_i=\lam_i$ for $1\leq j\leq t$ and $i\not\in I\setminus I_{\gamma'}.$ We see that $(\lam,\gamma)\preccurlyeq(\lam',\gamma').$ Also $\d_0\in R^{\lam',\gamma'}$ and \begin{equation}\label{full-supp}(\d_0,(\rre^{i})^{\lam'_i})\neq \{0\}\;\;\;\; (i\in I_{\gamma'}).\end{equation} Now  for each $\a\in R^{(\lam',\gamma')}\setminus\{0\},$  there is $i\in I_{\gamma'}$ with $p_i(\a)\neq 0,$ in particular $(\a,(\rre^{i})^{\lam'_i})\neq \{0\}.$ Using (\ref{full-supp}) together with  the fact that $(\rre^{i})^{\lam'_i}$ is connected for all $i\in I_{\gamma'},$ we get that $\d_0$ is connected to all nonzero roots of $R^{(\lam',\gamma')}.$ This in turn implies that $R^{(\lam',\gamma')}$ is irreducible; see Lemma \ref{decom}.
\qed
\begin{Pro}
\label{serg5}
Recall that $R$ is an irreducible  locally finite root supersystem of real type and use the same notation as above, then  we have  $|I|\leq 3.$ Also if  $\a\in \rim^\times,$ then $supp(\a)=I.$
\end{Pro}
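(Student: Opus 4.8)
The plan is to treat the two assertions in turn: the equality $\supp(\a)=I$ for nonsingular $\a$ will come directly from Lemma \ref{tame}, and the inequality $|I|\le 3$ will be reduced to the finite case handled by Serganova.

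First I would set aside the trivial case $\rim=\{0\}$, where $R=\rre$ is an irreducible locally finite root system and hence $|I|=1$. So assume $\rim^\times\neq\emptyset$ and fix $\d\in\rim^\times$. For the support statement, let $i\in I$. In real type $\v=\oplus_{i\in I}\v_{re}^i$ is an orthogonal direct sum (the components are mutually orthogonal and the form is nondegenerate on each by Lemma \ref{inv-form}), so $p_i$ is the orthogonal projection and $(\d,\b)=(p_i(\d),\b)$ for every $\b\in\rre^i$. By Lemma \ref{tame} we have $(\d,\rre^i)\neq\{0\}$; therefore $(p_i(\d),\rre^i)\neq\{0\}$, which forces $p_i(\d)\neq 0$, i.e. $i\in\supp(\d)$. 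Since $i\in I$ was arbitrary, $\supp(\d)=I$. As $\d$ lies in a finite-dimensional subspace of $\v$, only finitely many $p_i(\d)$ can be nonzero, so $I=\supp(\d)$ is finite.

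For the bound, I would pass to a finite piece. Using Proposition \ref{direct union-irr} (and the construction in Proposition \ref{direct union}), I would choose an irreducible finite root supersystem $R'=R^{(\lam,\gamma)}$ with $\d\in R'$ and $I_\gamma=I$; this is possible precisely because $I$ is now finite, so $I$ itself is an admissible index set $I_\gamma$, the $\lam_i$ can be enlarged so that $\d\in\v^{(\lam,\gamma)}$, and irreducibility is obtained by the refinement step of Proposition \ref{direct union-irr}. Every real root has singleton support, so $(R')_{re}=R_{re}\cap\v^{(\lam,\gamma)}=\uplus_{i\in I}(\rre^i)_{\lam_i}$ is a disjoint union of $|I|$ mutually orthogonal irreducible subsystems, each nonzero since $p_i(\d)\neq 0$ ensures $(\v_{re}^i)_{\lam_i}\neq\{0\}$. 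Thus $R'$ is an irreducible finite root supersystem whose real part has exactly $|I|$ irreducible components. Serganova proved that the even part of an irreducible (finite) generalized root system has at most three irreducible components \cite{serg}, so $|I|\le 3$.

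The one genuinely nontrivial ingredient is this last finite bound, which reflects the exceptional root supersystem $D(2,1;\a)$ whose real part is $A_1\oplus A_1\oplus A_1$; the rest is bookkeeping. The point requiring care is the transfer itself: one must verify that the irreducible-component decomposition of the chosen finite piece $R'$ exactly refines the global decomposition $\rre=\oplus_{i\in I}\rre^i$, and that $R'$ can be chosen at once irreducible and realizing all of $I$. Both are delivered by Proposition \ref{direct union-irr} together with the fact that the $p_i$ are orthogonal projections, so that supports are computed compatibly in $R$ and in $R'$.
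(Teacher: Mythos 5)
Your proof is correct, and for the main bound $|I|\le 3$ it follows the paper's route exactly: reduce to an irreducible finite root supersystem $R^{(\lam,\gamma)}$ via Proposition \ref{direct union-irr}, check that the irreducible components of its real part are precisely the $(\rre^i)_{\lam_i}$ with $i\in I_\gamma=I$, and invoke Serganova's bound on the number of components of the real part of an irreducible generalized root system. Where you genuinely diverge is the support statement: the paper also delegates $\supp(\a)=I$ to the finite case (citing \cite[Lemma 2.7]{serg} through the direct union), whereas you deduce it directly from Lemma \ref{tame} together with the orthogonality of the $\v_{re}^i$ and the nondegeneracy of the form on each component, so that $(\d,\rre^i)=(p_i(\d),\rre^i)\neq\{0\}$ forces $p_i(\d)\neq 0$. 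Your version is more self-contained and has the pleasant side effect of giving finiteness of $I$ for free (a nonsingular root has finite support in the direct sum), which is exactly what you need to realize all of $I$ inside a single finite piece $I_\gamma$; the paper's version keeps the proof to a one-line citation but leaves that bookkeeping implicit. Both are sound.
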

\pf It follows using Proposition \ref{direct union-irr} and \cite[Proposition 2.6 and Lemma 2.7]{serg}.

Using the above proposition, we assume that $I=\{1,\ldots, n\}$ for some positive integer  $n\leq 3.$


\begin{rem}\label{rem2}
{\rm Using the same arguments as in Propositions \ref{direct union} and \ref{direct union-irr}, we know that if $\rim\neq \{0\},$ then $R$ is a direct union of irreducible finite root supersystems $R^{(\lam,\gamma)},$ where $(\lam,\gamma)$ runs over a subset  $X$ of $\Lam\times\Gamma.$ Using Lemma \ref{serg5}, $I=I_\gamma=I_{\gamma'}$ for $(\lam,\gamma),(\lam',\gamma')\in X.$ So there is a subset $\Lam'$ of $\Lam$ such that  $R$ is a direct union of irreducible finite root supersystems  $R^{\lam}$ $(\lam\in\Lam')$ in which $R^{\lam}:=R\cap \sum_{i\in I}(\v^i_{re})_{\lam_i}$ for  $\lam=(\lam_1,\ldots,\lam_n).$
Without loss of generality, we always assume for $i\in I,$ $\cap_{(\lam_1,\ldots,\lam_n)\in\Lam'} (\rre^i)_{\lam_i}\neq \{0\}.$ In particular, for $1\leq i\leq n,$ if $\rre^i$ is of one of types $B_T,C_T,D_T$ or $BC_T$ as in (\ref{locally-finite}), for some  index set $T$ with $|T|\geq 2,$ we fix two distinct elements of $T$ and call them 1,2 and  assume $\cap_{(\lam_1,\ldots,\lam_n)\in\Lam'} (\rre^i)_{\lam_i}\cap\hbox{span}_\bbbf\{\ep_1,\ep_2\}\neq \{0\}.$}
\end{rem}
\begin{lem}
\label{small-orbit}
Suppose that $i\in I,$ then $p_i(\rim)$ is a union of small orbits. In particular, if $\rre^i$ is of type $A,$ it is of finite rank.
\end{lem}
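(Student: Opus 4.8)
Fix $i\in I$. The first task is to verify that every element of $p_i(\rim)$ is a weight of the irreducible locally finite root system $\rre^i$ and that $p_i(\rim)$ is stable under $\wi$. For $\a\in\rre^i\setminus\{0\}$ one has $\check\a=2(\a,\cdot)/(\a,\a)$ with $\a\in\v_{re}^i$, so by orthogonality of the components $\check\a(p_i(\d))=\check\a(\d)=2(\d,\a)/(\a,\a)\in\bbbz$ for every $\d\in\rim$; thus $p_i(\d)$ is a weight of $\rre^i$. Moreover $p_i$ commutes with each reflection $r_\a$ ($\a\in\rre^i\setminus\{0\}$), since $r_\a$ alters only the $\v_{re}^i$-component, and $\rim$ is $\w$-stable by Lemma \ref{real roots}(a); hence $\wi\, p_i(\d)=p_i(\wi\d)\sub p_i(\rim)$, so $p_i(\rim)$ is a union of $\wi$-orbits. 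It then remains to prove that each such orbit is small.

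If $\rim=\{0\}$ (equivalently $|I|=1$) then $p_i(\rim)=\{0\}$ and there is nothing to prove, so I would take $\d\in\rim^\times$; then $|I|\geq2$ and $\supp(\d)=I$ by Proposition \ref{serg5}. Writing two elements of the orbit as $x=p_i(\d')$ and $y=p_i(\d'')$ with $\d',\d''\in\wi\d\sub\rim^\times$, and using that $\wi$ fixes every component $p_j$ with $j\neq i$, one has $p_j(\d')=p_j(\d'')=p_j(\d)$, whence $x-y=\d'-\d''$. Set $m:=(x,x)=(y,y)=(p_i(\d),p_i(\d))$; since $p_i(\d)\in\hbox{span}_\bbbq\rre^i$ is nonzero (Lemma \ref{finite}(i)) and the form is anisotropic on $\hbox{span}_\bbbq\rre^i$ (Lemma \ref{inv-form}), we have $m\neq0$. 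From $(\d,\d)=0$ one gets $\sum_{j\neq i}(p_j(\d),p_j(\d))=-m$, and therefore $(\d',\d'')=(x,y)-m$. The goal is to show: if $x\neq\pm y$, then $x-y\in\rre^i\setminus\{0\}$.

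I would then split on the value of $(\d',\d'')$. If $(\d',\d'')=0$, then $(x-y,x-y)=2m-2(x,y)=0$; as $x-y\in\hbox{span}_\bbbq\rre^i$, the anisotropy in Lemma \ref{inv-form} forces $x=y$, so this subcase cannot arise when $x\neq\pm y$. If $(\d',\d'')\neq0$, the nonsingular-root axiom applied to $\d''\in\rim$ and $\d'\in R$ gives $\{\d'-\d'',\d'+\d''\}\cap R\neq\emptyset$. Now $\d'+\d''$ has norm $2(\d',\d'')\neq0$, so were it a root it would be real and hence supported on a single index; but $p_j(\d'+\d'')=2p_j(\d)\neq0$ for all $j\neq i$ by full support, and $p_i(\d'+\d'')=x+y\neq0$ (as $x\neq-y$), producing at least two support indices, a contradiction. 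Hence $\d'-\d''\in R$, and being anisotropic it is real and lies in $\v_{re}^i$, so $x-y=\d'-\d''\in\rre^i\setminus\{0\}$. This establishes that every orbit $\wi\,p_i(\d)$ is small, and consequently $p_i(\rim)$ is a union of small orbits.

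For the final clause, suppose $\rre^i$ is of type $A$, i.e.\ of type $\dot A_T$; the assertion is nonvacuous only when $\rim\neq\{0\}$, so I would take $\d\in\rim^\times$, for which $p_i(\d)\neq0$ because $\supp(\d)=I\ni i$. Thus $p_i(\rim)$ contains a nonzero small orbit, whereas by Proposition \ref{smallorbits}(iii) a root system of type $\dot A_T$ with $T$ infinite admits no small orbit; hence $T$ is finite and $\rre^i$ has finite rank. The main obstacle is the smallness step, and within it the exclusion of $\d'+\d''$ as a root: this is precisely where the full-support property of nonsingular roots (Proposition \ref{serg5}) together with the single-index support of real roots does the essential work, while the degenerate pairing subcase is dispatched entirely by the anisotropy of the form on each rational component space (Lemma \ref{inv-form}). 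Notably, this route proves smallness directly in the locally finite setting, so no passage through the finite approximations of Remark \ref{rem2} is needed for this lemma.
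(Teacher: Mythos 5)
Your proposal is correct and follows essentially the same route as the paper: the paper's proof is a one-line appeal to the argument of Serganova's Proposition 3.5 combined with Propositions \ref{serg5} and \ref{smallorbits}, and your write-up is exactly that argument carried out explicitly in the locally finite setting (full support of nonsingular roots forcing $\d'+\d''\notin R$, anisotropy on $\hbox{span}_\bbbq\rre^i$ from Lemma \ref{inv-form} handling the orthogonal case, and Proposition \ref{smallorbits}(iii) giving finite rank in type $A$). The only caveat, which is an imprecision in the statement itself rather than in your proof, is that the ``in particular'' clause tacitly presupposes $\rim\neq\{0\}$, and you have handled that reading appropriately.
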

\pf Using the same argument as in \cite[Proposition 3.5]{serg} and considering Propositions \ref{serg5} and \ref{smallorbits}, we are done.\qed

\smallskip

In the following theorem, using the classification of finite root supersystems \cite[\S 6 and Theorem 5.10]{serg} together with Proposition \ref{smallorbits} and Remark \ref{rem3}(ii), we give the classification of  irreducible locally finite root supersystems of real type.

\begin{Thm}
\label{classification II}
 Suppose that $2\leq n\leq 3$ and  $S_1,\ldots,S_n$  are irreducible locally finite root systems in $\u_1,\ldots,\u_n,$ respectively. Consider the locally finite root system  $S:=S_1\op \cdots\op S_n$ in $\v:=\u_1\op\cdots\op\u_n.$ Take $\w$ to be the Weyl group of $S.$ Fix a symmetric invariant bilinear form $\fm_i$ on $\u_i.$  With the same notation as in the text,  for $1\leq i\leq n,$ if $S_i$ is a finite root system of rank $\ell\geq 2,$ by $\{\omega_1^i,\ldots,\omega_\ell^i\},$ we mean a  set of fundamental weights for $S_i$ and if $S_i$ is one of locally finite root systems $B_T, C_T, D_T$ or $BC_T$ of infinite rank as in (\ref{locally-finite}), by $\omega_1^i,$ we mean $\ep_1,$ where $1$ is a distinguished element of $T.$ Also if $S_i$ is one of  the  finite root systems $\{0,\pm\a\}$ of type $A_1$ or $\{0,\pm\a,\pm2\a\}$ of type $BC_1,$ we set   $\omega_1^i:=\a/2.$
Consider $\d^*$ and $R:=\rre\cup\rim^\times$ as in the following table:
$${\tiny
\begin{tabular}{|c|l|c|c|c|c|}
\hline
$ n$& $ S_i\;(1\leq i\leq n)$&$ R_{re}$&$\d^*$&$ R_{ns}^\times$&\hbox{type}\\
\hline
$ 2$& $ S_1=A_\ell,\; S_2=A_\ell$ $(\ell\in\bbbz^{\geq1})$&$S_1\op S_2$&$\omega_1^1+\omega_1^2$&$\pm\w\d^*$&$A(\ell,\ell)$\\
\hline
$2$& $ S_1=B_T,\;S_2=BC_{T'}$ $(|T|,|T'|\geq2)$&$S_1\op S_2$&$\omega_1^1+\omega_1^2$&$\w\d^*$&$B(T,T')$\\
\hline
$2$& $ S_1=BC_T,\;S_2=BC_{T'}$ $(|T|,|T'|>1)$&$S_1\op S_2$&$\omega_1^1+\omega_1^2$&$\w\d^*$&$BC(T,T')$\\
\hline
$2$& $ S_1=BC_T,\;S_2=BC_{T'}$ $(|T|=1,|T'|=1)$&$S_1\op S_2$&$2\omega_1^1+2\omega_1^2$&$\w\d^*$&$BC(T,T')$\\
\hline
$2$& $ S_1=BC_T,\;S_2=BC_{T'}$ $(|T|=1,|T'|>1)$&$S_1\op S_2$&$2\omega_1^1+\omega_1^2$&$\w\d^*$&$BC(T,T')$\\
\hline
$2$& $ S_1=D_T,\;S_2=C_{T'}$ $(|T|\geq3,|T'|\geq2)$&$S_1\op S_2$&$\omega_1^1+\omega_1^2$&$\w\d^*$&$D(T,T')$\\
\hline
$2$& $ S_1=C_T,\;S_2=C_{T'}$ $(|T|,|T'|\geq2)$&$S_1\op S_2$&$\omega_1^1+\omega_1^2$&$\w\d^*$&$C(T,T')$\\
\hline
$2$& $ S_1=A_1,\;S_2=BC_{T}$ $(|T|=1)$&$S_1\op S_2$&$2\omega_1^1+2\omega_1^2$&$\w\d^*$&$B(1,T)$\\
\hline
$2$& $ S_1=A_1,\;S_2=BC_{T}$ $(|T|>1)$&$S_1\op S_2$&$2\omega_1^1+\omega_1^2$&$\w\d^*$&$B(1,T)$\\
\hline
$2$& $ S_1=A_1,\;S_2=C_T$ $(|T|\geq 2)$&$S_1\op S_2$&$\omega_1^1+\omega_1^2$&$\w\d^*$&$C(1,T)$\\
\hline
$2$& $ S_1=A_1,\;S_2=B_3$ &$S_1\op S_2$&$\omega_1^1+\omega_3^2$&$\w\d^*$&$AB(1,3)$\\
\hline
$2$& $ S_1=A_1,\;S_2=D_{T}$ $(|T|\geq3)$&$S_1\op S_2$&$\omega_1^1+\omega_1^2$&$\w\d^*$&$D(1,T)$\\
\hline
$2$& $ S_1=BC_1,\;S_2=B_T$ $(|T|\geq2)$&$S_1\op S_2$&$2\omega_1^1+\omega_1^2$&$\w\d^*$&$B(T,1)$\\
\hline
$2$& $ S_1=BC_1,\;S_2=G_2$ &$S_1\op S_2$&$2\omega_1^1+\omega_1^2$&$\w\d^*$&$G(1,2)$\\
\hline
$3$& $ S_1=A_1,\;S_2=A_1,\; S_3=A_1$ &$S_1\op S_2\op S_3$&$\omega_1^1+\omega_1^2+\omega_1^3$&$\w\d^*$&$D(2,1,\lam)$\\
\hline
$3$& $ S_1=A_1,\; S_2= A_1,\; S_3:=C_T$ $(|T|\geq2)$&$S_1\op S_2\op S_3$&$\omega_1^1+\omega_1^2+\omega_1^3$&$\w\d^*$&$D(2,T)$\\
\hline
\end{tabular}
}$$
Normalize the form $\fm_i$ on $\u_i$ such that  $(\d^*,\d^*)=0$ and that for type $D(2,T),$ $(\omega_1^1,\omega_1^1)_1=(\omega_1^2,\omega_1^2)_2.$  Then for $\fm:=\op_{i=1}^n\fm_i,$ $(\v,\fm,R)$ is an irreducible locally finite root supersystem of real type and conversely if  $(\v,\fm,R)$ is an irreducible locally finite root supersystem of real type, it is either an irreducible locally finite root system or  isomorphic to one and only  one of the locally finite root supersystems listed in the above table.
\end{Thm}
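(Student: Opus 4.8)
The plan is to reduce both assertions to Serganova's finite-dimensional classification \cite[\S6, Theorem 5.10]{serg}, using the direct-union descriptions of Propositions \ref{direct union} and \ref{direct union-irr} and the small-orbit dictionary of Proposition \ref{smallorbits}. For the existence part I would check each row of the table directly. Since $\d^*$ is a sum of fundamental weights, $\d^*\in\hbox{span}_\bbbf S=\v$, so $R\sub\hbox{span}_\bbbf\rre$ and the candidate is of real type. When $S$ is finite the triple $(\v,\fm,R)$ is visibly one of Serganova's finite root supersystems (Remark \ref{rem3}(i)); when $S$ is infinite I would write $R$ as the direct union of its truncations $R^\lam:=R\cap\sum_{i\in I}(\v_{re}^i)_{\lam_i}$ as in Remark \ref{rem2}, each of which is a finite root supersystem of the corresponding finite type. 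As every axiom of a locally finite root supersystem involves only finitely many roots, the axioms pass to the direct union, and irreducibility holds because the single orbit $\w\d^*$ meets, hence connects, every component $\rre^i$.

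For the classification, let $(\v,\fm,R)$ be irreducible of real type. If $\rim=\{0\}$ then $R=\rre$ is an irreducible locally finite root system, giving the first alternative. Assume $\rim\neq\{0\}$. The bound $|I|\leq3$ is Proposition \ref{serg5}, while $|I|\geq2$ follows from Lemma \ref{finite}(i): if $|I|=1$ then for $\d\in\rim^\times$ we would have $p_1(\d)=\d\in\hbox{span}_\bbbq\rre^1$ with $\d\neq0$, forcing $(\d,\d)\neq0$ and contradicting $\d\in\rim$. By Proposition \ref{serg5} every $\d\in\rim^\times$ has $\supp(\d)=I$, so each projection $p_i(\rim)$ is nonempty; Lemma \ref{small-orbit} shows it is a union of small orbits, whence each $\rre^i$ admits a small orbit. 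Proposition \ref{smallorbits} then excludes $\dot A_T$ of infinite rank together with $E_{6,7,8}$ and $F_4$, and Lemma \ref{small-orbit} forces every type-$A$ factor to have finite rank.

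Next I would invoke Remark \ref{rem2} to write $R=\cup_{\lam\in\Lam'}R^\lam$ as a direct union of irreducible finite root supersystems with $I$ constant along the union, and apply Serganova's finite classification to each $R^\lam$. The admissible tuples $(\rre^1,\ldots,\rre^n)$, together with the small orbit carried by each $p_i(\rim)$, then stabilize to exactly one family of the table. To fix $\d^*$, note that for each $i$ the set $p_i(\rim)$ is one of the small orbits identified in Proposition \ref{smallorbits}, with representative a fundamental weight $\omega^i$ (namely $\omega_1^i$, or $2\omega_1^i$ in a $BC_1$ slot, or $\omega_3^2$ for a $B_3$ factor); using $\rim^\times=\pm\w\d$ from Proposition \ref{conjugate}(ii) and $\d=\sum_i p_i(\d)$, I would take $\d^*:=\sum_i\omega^i$. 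The requirement $\d^*\in\rim$, i.e. $(\d^*,\d^*)=\sum_i(\omega^i,\omega^i)=0$, is precisely the normalization of the forms $\fm_i$ stated in the theorem (including the extra relation in type $D(2,T)$ and the free parameter producing the family $D(2,1,\lam)$). For uniqueness I would argue as in Propositions \ref{pre-class} and \ref{pre-class2}: two real-type systems occupying the same row have isomorphic real parts by Remark \ref{rem3}(ii), and an isomorphism of real parts respecting the normalization extends over the nonsingular roots because $\rim^\times=\pm\w\d^*$ is a single orbit; distinctness of the rows follows by comparing the $R_{re}$ types and the recorded orbit data.

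The step I expect to be hardest is matching the infinite-rank possibilities against Serganova's finite list: one must verify that the small-orbit constraints are simultaneously compatible along the whole directed family $\{R^\lam\}$, so that the tuple of types and the normalization of the form are genuinely forced with no spurious family appearing, while conversely every finite combination in the list really does admit a locally finite extension. This is exactly where the interaction of Proposition \ref{smallorbits}, Lemma \ref{small-orbit} and the bookkeeping of Remark \ref{rem2} must be handled with care.
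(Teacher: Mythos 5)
Your proposal is correct and follows essentially the same route the paper takes: reduce to Serganova's finite classification via the direct-union structure of Propositions \ref{direct union} and \ref{direct union-irr}, constrain the factors $\rre^i$ through the small-orbit analysis of Proposition \ref{smallorbits} and Lemma \ref{small-orbit}, and pin down $\d^*$ and the uniqueness claim using the single $\w$-orbit of nonsingular roots as in Propositions \ref{conjugate}, \ref{pre-class} and \ref{pre-class2}. The paper itself leaves the verification at exactly this level of detail, so your write-up is, if anything, more explicit than the published argument.
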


\centerline{Acknowledgment}

 This research was in part
supported by a grant from IPM (No. 91170415) and partially carried out in IPM-Isfahan branch.

\end{document}